\documentclass[11pt]{article}
%This is a template for producing LIPIcs articles.
%See lipics-manual.pdf for further information.
%for A4 paper format use option "a4paper", for US-letter use option "letterpaper"
%for british hyphenation rules use option "UKenglish", for american hyphenation rules use option "USenglish"
% for section-numbered lemmas etc., use "numberwithinsect"

\usepackage[a4paper, margin=1in]{geometry}
\usepackage{amsfonts,amsmath,amssymb,amsthm,graphicx,fixmath,latexsym,color}
\usepackage{color}
\usepackage{xcolor}
\usepackage{hyperref,epsfig}
\usepackage{algorithmic}
\usepackage{algorithm}
\usepackage{xspace}

\def\eps{{\epsilon}}
\providecommand{\remove}[1]{}

\newcommand{\HD}{\ensuremath{\mathsf{HD}}\xspace}

\renewcommand{\Re}{\mathbb{R}}

\newcommand{\G}{\mathcal{G}}

\newcommand{\F}{\mathcal{F}}

\newtheorem{theorem}{Theorem}[section]
\newtheorem{lemma}[theorem]{Lemma}
\newtheorem{proposition}[theorem]{Proposition}
\newtheorem{claim}[theorem]{Claim}
\newtheorem{corollary}[theorem]{Corollary}

\newtheorem{remark}[theorem]{Remark}

\newtheorem*{theorem*}{Theorem}
\newtheorem*{lemma*}{Lemma}
\newtheorem*{proposition*}{Proposition}
\newtheorem{observation}[theorem]{Observation}

\begin{document}

\title{From a $(p,2)$-Theorem to a Tight $(p,q)$-Theorem}

\author{Chaya Keller\thanks{Department of Mathematics, Ben-Gurion University of the NEGEV, Be'er-Sheva Israel. \texttt{kellerc@math.bgu.ac.il}. Research partially supported by Grant 635/16 from the Israel Science Foundation, by the Shulamit Aloni Post-Doctoral Fellowship of the Israeli Ministry of Science and Technology, and by the Kreitman Foundation Post-Doctoral Fellowship.}
\and
Shakhar Smorodinsky\thanks{Department of Mathematics, Ben-Gurion University of the NEGEV, Be'er-Sheva Israel. \texttt{shakhar@math.bgu.ac.il}. Research partially supported by Grant 635/16 from the Israel Science Foundation.}
}

\date{}
\maketitle

\begin{abstract}
A family $\F$ of sets is said to satisfy the $(p,q)$-property if among any $p$ sets of $\F$ some $q$ have a non-empty intersection. The celebrated $(p,q)$-theorem of Alon and Kleitman asserts that any family of compact convex sets in $\Re^d$ that satisfies the $(p,q)$-property for some $q \geq d+1$, can be pierced by a fixed number (independent on the size of the family) $f_d(p,q)$ of points. The minimum such piercing number is denoted by $\HD_d(p,q)$. Already in 1957, Hadwiger and Debrunner showed that whenever $q>\frac{d-1}{d}p+1$ the piercing number is $\HD_d(p,q)=p-q+1$; no exact values of $\HD_d(p,q)$ were found ever since.

While for an arbitrary family of compact convex sets in $\Re^d$, $d \geq 2$, a $(p,2)$-property does not imply a bounded piercing number, such bounds were proved for numerous specific families. The best-studied among them is axis-parallel boxes in $\Re^d$, and specifically, axis-parallel rectangles in the plane. Wegner and (independently) Dol'nikov used a $(p,2)$-theorem for axis-parallel rectangles to show that $\HD_{\mathrm{rect}}(p,q)=p-q+1$ holds for all $q>\sqrt{2p}$. These are the only values of $q$ for which $\HD_{\mathrm{rect}}(p,q)$ is known exactly.

In this paper we present a general method which allows using a $(p,2)$-theorem as a bootstrapping to obtain a tight $(p,q)$-theorem, for families with Helly number 2, even without assuming that the sets in the family are convex or compact. To demonstrate the strength of this method, we obtain a significant improvement of an over 50 year old result by Wegner and Dol'nikov. Namely, we show that $\HD_{\mathrm{d-box}}(p,q)=p-q+1$ holds for all $q > c' \log^{d-1} p$, and in particular, $\HD_{\mathrm{rect}}(p,q)=p-q+1$ holds for all $q \geq 7 \log_2 p$ (compared to $q \geq \sqrt{2p}$ of Wegner and Dol'nikov).

%As a corollary, we show that $\HD_{\mathrm{d-box}}(p,q)=p-q+1$ holds for all $q > c' \log^{d-1} p$, and in particular, $\HD_{\mathrm{rect}}(p,q)=p-q+1$ holds for all $q \geq 7 \log_2 p$. This improves significantly on the previous known range  $q>\sqrt{2p}$ obtained by Wegner and Dol'nikov more than 50 years ago.

In addition, for several classes of families, we present improved $(p,2)$-theorems, some of which can be used as a bootstrapping to obtain tight $(p,q)$-theorems. In particular, we show that any family $\F$ of compact convex sets in $\Re^d$ with Helly number 2 admits a $(p,2)$-theorem with piercing number $O(p^{2d-1})$, and thus, satisfies $\HD_{\F}(p,q)=p-q+1$ for all $q>cp^{1-\frac{1}{2d-1}}$, for a universal constant $c$.
\end{abstract}

%\newpage

\section{Introduction}
%\subsection{Hadwiger-Debrunner numbers}

\subsection{Helly's theorem and (p,q)-theorems}

The classical Helly's theorem says that if in a family of compact convex sets in $\Re^d$
every $d+1$ members have a non-empty intersection then the whole family has a non-empty intersection.

For a pair of positive integers $p \geq q$, we say that a family $\F$ of sets satisfies the $(p,q)$-property if $|\F|\ge p$, none of the sets in $\F$ is empty, and among any $p$ sets of $\F$ there are some $q$ with a non-empty intersection. A set $P$ is called a {\em transversal} (or alternatively, a {\em piercing set}) for $\F$ if it has a non-empty intersection with every member of $\F$. In this language, Helly's theorem states that any family of compact convex sets in $\Re^d$ satisfying the $(d+1,d+1)$-property has a singleton transversal (alternatively, can be pierced by a single point).

In general, $d+1$ is clearly optimal in Helly's theorem, as any family of $n$ hyperplanes in a general position in $\Re^d$ satisfies the $(d,d)$-property but cannot be pierced by less than $n/d$ points. However, for numerous specific classes of families, a $(d',d')$-property for some $d'<d+1$ is already sufficient to imply piercing by a single point. The minimal number $d'$ for which this holds is called the \emph{Helly number} of the family. For example, any family of {\em axis-parallel boxes} in $\Re^d$ has Helly number 2.

\medskip

In 1957, Hadwiger and Debrunner \cite{HD57} proved the following generalization of Helly's theorem:
\begin{theorem}[Hadwiger-Debrunner Theorem \cite{HD57}]\label{thm:HD-thm}
For all $p \geq q \geq d+1$ such that $q > \frac{d-1}{d}p +1$, any family of compact convex sets in $\Re^d$ that satisfies the $(p,q)$-property can be pierced by $p-q+1$ points.
\end{theorem}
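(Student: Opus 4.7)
The plan is to prove the bound $\tau(\F) \leq p - q + 1$ by induction on the cardinality $n := |\F|$.  For the base case $n = p$, the $(p,q)$-property directly produces $q$ sets of $\F$ sharing a common point $x_0$; then $x_0$ together with one arbitrary point chosen from each of the remaining $p - q$ sets forms a piercing set of the desired size.

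For the inductive step ($n > p$), my strategy is to find a single point that pierces a large ``intersecting core'' of $\F$ and then apply the inductive hypothesis (or a direct argument) to the residual subfamily.  Concretely, I would choose a \emph{maximal} intersecting subfamily $\A \subseteq \F$ --- one for which $\bigcap_{S \in \A} S \neq \emptyset$ but $\bigcap_{S \in \A \cup \{T\}} S = \emptyset$ for every $T \in \F \setminus \A$.  By compactness and convexity, $\bigcap_{S \in \A} S$ is a nonempty compact convex set, so one can fix any $x_0$ in it; $x_0$ pierces all of $\A$, and by maximality it misses every $S \in \B := \F \setminus \A$.  Moreover, maximality together with Helly's theorem in $\Re^d$ guarantees, for each $S \in \B$, the existence of an \emphi{obstruction $d$-tuple} $(A_1^S,\dots,A_d^S) \in \A^d$ with $S \cap A_1^S \cap \dots \cap A_d^S = \emptyset$.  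If $|\B| \leq p - q$, then $\{x_0\}$ together with one point from each $S \in \B$ pierces $\F$ with at most $p - q + 1$ points and we are done.

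The heart of the argument --- and the main obstacle --- is to rule out the case $|\B| \geq p - q + 1$ under the hypothesis $q > \frac{d-1}{d} p + 1$.  The plan is to obtain a contradiction with the $(p,q)$-property by exhibiting $p$ sets of $\F$ among which no $q$ have a common point.  I would choose $p - q + 1$ sets from $\B$ together with at most $q - 1$ carefully selected sets from $\A$, arranging two properties: (i) no $q$-subset of the selection can lie entirely on the $\A$-side (since only $q - 1$ such sets are included, all sharing $x_0$, no automatic intersection appears from that side); and (ii) for every chosen $S \in \B$, all $d$ members of some obstruction $d$-tuple of $S$ belong to the selection, so that every $q$-subset containing $S$ also contains a Helly-violating $(d+1)$-subset and hence has empty intersection.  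The combinatorial challenge is that the obstruction $d$-tuples for different $\B$-sets must be made to overlap heavily, so that the $d(p - q + 1)$ obstruction ``slots'' collapse into only $q - 1$ distinct elements of $\A$.  The assumption $d(q - 1) > (d - 1) p$, equivalent to $q > \frac{d-1}{d}p + 1$, is precisely the threshold that, via a pigeonhole or double-counting argument over the obstruction assignments $S \mapsto (A_1^S,\dots,A_d^S)$, forces such a compression to exist.  Executing this compression rigorously --- in particular, identifying which $\A$-sets serve as shared obstructions for many elements of $\B$ --- is the technical crux of the proof and the place where the tight arithmetic appears.
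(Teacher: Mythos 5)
The paper does not reproduce a proof of the Hadwiger--Debrunner theorem; it cites \cite{HD57} and only uses the statement, so there is no internal proof to compare against, and I evaluate your argument on its own terms. Your base case and the easy branch of the dichotomy (when $|\B| \leq p-q$, pierce $\A$ with $x_0$ and each member of $\B$ separately) are fine. The difficulty is the contradiction you want when $|\B| \geq p-q+1$, and specifically the inference labeled (ii). Suppose you have built a selection $\T$ consisting of $p-q+1$ members of $\B$ and $q-1$ members of $\A$, and arranged that every chosen $S \in \B$ has an obstruction $d$-tuple entirely inside the $\A$-part of $\T$. You claim this makes every $q$-subset of $\T$ containing $S$ have empty intersection. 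It does not: a $q$-subset of $\T$ containing $S$ need not contain the whole obstruction $d$-tuple of $S$, because it can omit up to $p-q$ of the $q-1$ chosen $\A$-sets. Any $q$-subset containing two or more $\B$-members (and such subsets exist whenever $p \geq q+1$, i.e., in every non-trivial case) omits at least one $\A$-set, and nothing prevents precisely the omitted set from lying in every available obstruction tuple. Even in the cleanest conceivable compression --- all obstruction tuples identical --- a $q$-subset of the form $\{S_1,S_2\}$ together with all but one $\A$-set, the omitted one being a member of the common obstruction tuple, is uncontrolled. What your construction actually shows is that $\T$ contains \emph{some} non-intersecting $q$-subsets, which is no contradiction to the $(p,q)$-property: that property demands only that at least one $q$-subset intersects.

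There is a second, independent gap in the ``compression'' step. You hope that $q > \frac{d-1}{d}p+1$, i.e., $d(q-1) > (d-1)p$, forces (via pigeonhole or double counting) the $d(p-q+1)$ obstruction slots to land in only $q-1$ distinct members of $\A$. But $|\A|$ can be arbitrarily large, and the obstruction $d$-tuples of distinct $S$'s need not repeat at all; in fact $d(q-1) > (d-1)p$ does not even imply the weaker $d(p-q+1) \le q-1$, since the latter is equivalent to $q \geq \frac{dp+d+1}{d+1}$, which exceeds $\frac{(d-1)p+d}{d}$ by $\frac{p}{d(d+1)} > 0$. So the compression is not forced by the hypothesis, and even full compression would not yield the contradiction, as noted above. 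The classical proof of Hadwiger and Debrunner proceeds differently: it fixes an extremal point $x$ in lexicographic order (roughly, the lex-greatest point that arises as the lex-least point of the intersection of some intersecting tuple from $\F$), pierces with $x$, and shows the residual family of sets missing $x$ inherits a $(p',q')$-property with $p'-q' < p-q$; the arithmetic threshold $q > \frac{d-1}{d}p+1$ enters in the counting that establishes this residual property, not in a pigeonhole over Helly obstructions.
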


\begin{remark}
The bound in Theorem~\ref{thm:HD-thm} is tight. Indeed, any family of $n$ sets which consists of $p-q$ pairwise disjoint sets and $n-(p-q)$ copies of the same set satisfies the $(p,q)$-property but cannot be pierced by less than $p-q+1$ points.
\end{remark}

Hadwiger and Debrunner conjectured that while for general $p \geq q \geq d+1$, a transversal of size $p-q+1$ is not guaranteed, a $(p,q)$-property does
imply a bounded-size transversal. This conjecture was proved only 35 years later, in the celebrated $(p,q)$-theorem of Alon and Kleitman.
\begin{theorem}[Alon-Kleitman $(p,q)$-Theorem \cite{AK}]
For any triple of positive integers $p \geq q \geq d+1$, there exists an integer $s=s(p,q,d)$ such that if $\F$ is a family of compact convex sets in $\Re^d$ satisfying the $(p,q)$-property, then there exists a transversal for $\F$ of size at most $s$.
\end{theorem}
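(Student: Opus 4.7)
The plan is to follow the classical three-step architecture that is essentially forced by the structure of the problem: first extract a fractional-Helly condition from the combinatorial $(p,q)$-hypothesis, then convert this into a small fractional transversal via LP duality, and finally round the fractional transversal to an integral one using weak $\epsilon$-nets for convex sets.

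First, I would show that the $(p,q)$-property implies a fractional Helly condition. A simple counting argument says that since every $p$-subset of $\F$ contains an intersecting $q$-tuple, the fraction of intersecting $q$-tuples of $\F$ is at least some constant $\alpha_0=\alpha_0(p,q)>0$. Each such intersecting $q$-tuple contributes $\binom{q}{d+1}$ intersecting $(d+1)$-tuples (by Helly applied inside it), so a positive fraction $\alpha=\alpha(p,q,d)>0$ of all $(d+1)$-tuples of $\F$ have a common point. Applying the Katchalski--Liu fractional Helly theorem then yields a constant $\beta=\beta(\alpha,d)>0$ and a point $x\in\Re^d$ lying in at least $\beta\cdot|\F|$ members of $\F$.

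The next step is to upgrade this ``first-selection'' statement into a bounded fractional transversal. Consider the infinite LP whose variables are weights $w(x)$ on points of $\Re^d$ subject to $\sum_{x\in S} w(x)\ge 1$ for every $S\in\F$, minimizing $\sum_x w(x)$; the above first-selection implies, via LP duality and a standard multiplicative-weights / reweighting argument applied inductively to subfamilies (since the $(p,q)$-property is inherited by subfamilies), that there is a probability measure $\mu$ on $\Re^d$ and a constant $\epsilon=\epsilon(p,q,d)>0$ such that $\mu(S)\ge\epsilon$ for every $S\in\F$. Equivalently, $\F$ has fractional transversal number $O(1/\epsilon)$.

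Finally, I would invoke the weak $\epsilon$-net theorem of Alon--B\'ar\'any--F\"uredi--Kleitman for convex sets: for every $\epsilon>0$ and every Borel probability measure on $\Re^d$, there exists a set of at most $g(\epsilon,d)$ points that meets every convex set of $\mu$-measure $\ge\epsilon$. Applied to the measure $\mu$ from the previous step, this set pierces every member of $\F$, giving a transversal of size $s=s(p,q,d)\le g(\epsilon,d)$ as required. The main obstacle, and the heart of the proof, is the weak $\epsilon$-net step: one must ensure that the net size depends only on $\epsilon$ and $d$, not on $|\F|$, which is exactly the deep content of the Alon--B\'ar\'any--F\"uredi--Kleitman theorem and is the mechanism that allows the combinatorial $(p,q)$-hypothesis to produce a transversal of size independent of $|\F|$. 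The LP-duality step is also technically delicate because the underlying ``point'' variable set is infinite, so one must either restrict to a finite set of candidate points (e.g., vertices of the arrangement of the sets) or argue directly with measures before applying the weak $\epsilon$-net result.
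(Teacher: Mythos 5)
This statement is the Alon--Kleitman $(p,q)$-theorem, which the paper cites from~\cite{AK} as a known black-box result and does not prove; there is therefore no internal proof against which to compare your attempt. Evaluated on its own terms, your outline correctly reproduces the standard three-step architecture of the Alon--Kleitman proof: (i) the $(p,q)$-property yields, by double counting and Helly inside each intersecting $q$-tuple, a positive fraction of intersecting $(d+1)$-tuples, and the Katchalski--Liu fractional Helly theorem then produces a point in a constant fraction of the sets; (ii) this is upgraded to a bounded fractional transversal; (iii) the Alon--B\'ar\'any--F\"uredi--Kleitman weak $\epsilon$-net theorem converts the fractional transversal into an integral one of size depending only on $p,q,d$.

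The one place your description is imprecise is step (ii). The standard argument is not a multiplicative-weights or inductive reweighting scheme over subfamilies; it is a direct LP-duality argument. One observes that the fractional transversal number $\tau^*(\F)$ equals the fractional matching number $\nu^*(\F)$, takes an optimal rational fractional matching, clears denominators to obtain a multiset $\mathcal{G}$ of members of $\F$ in which every point is covered with bounded multiplicity, and then applies the fractional Helly conclusion of step (i) to $\mathcal{G}$ (the $(p,q)$-property, and hence the positive fraction of intersecting $(d+1)$-tuples, passes to such multisets) to bound $\nu^*$ and hence $\tau^*$. Your remark about restricting the infinite LP to a finite set of candidate points, such as one per cell of the arrangement, is the right way to make the duality step rigorous. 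With step (ii) corrected along these lines, the proposal is an accurate reconstruction of the classical proof.
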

The smallest value $s$ that works for $p\ge q>d$ is called `the Hadwiger-Debrunner number' and is denoted by $\HD_d(p,q)$. For various specific classes of families, a stronger $(p,q)$-theorem can be obtained. In such cases, we denote the minimal $s$ that works for the family $\F$ by $\HD_{\F}(p,q)$.

The $(p,q)$-theorem has a rich history of variations and generalizations. To mention a few: In 1997, Alon and Kleitman~\cite{AK97} presented a simpler proof of the theorem (that leads to a somewhat weaker quantitative result). Alon et al.~\cite{AKMM} proved in 2001 a `topological' $(p,q)$-theorem for finite families of sets which are a {\em good cover} (i.e., the intersection of every subfamily is either empty or contractible), and B\'{a}r\'{a}ny et al.~\cite{BFMOP14} obtained in 2014 colorful and fractional versions of the theorem.

The size of the transversal guaranteed by the $(p,q)$-theorem is huge, and a large effort was invested in proving better bounds on $\HD_d(p,q)$, both in general and in specific cases. The most recent general result, by the authors and Tardos~\cite{KST17}, shows that for any $\eps>0$, $\HD_d(p,q) \leq p-q+2$ holds for all $(p,q)$ such that $p>p_0(\eps)$ and $q>p^{\frac{d-1}{d}+\eps}$. Yet, no exact values of the Hadwiger-Debrunner number are known except for those given in the Hadwiger-Debrunner theorem. In fact, even the value $\HD_2(4,3)$ is not known, the best bounds being $3 \leq \HD_2(4,3) \leq 13$ (obtained by Kleitman et al.~\cite{KGT01} in 2001).

\subsection{(p,2)-theorems and their applications}

As mentioned above, while no general $(p,q)$-theorems exist for $q \leq d$, such theorems can be proved for various specific families. Especially desirable are $(p,2)$-theorems, which relate the \emph{packing number}, $\nu(\F)$, of the family $\F$ (i.e., the maximum size of a subfamily all of whose members are pairwise disjoint) to its \emph{piercing number}, $\tau(\F)$ (i.e., the minimal size of a piercing set for the family $\F$).

In the last decades, $(p,2)$-theorems were proved for numerous families. In particular, in 1991 K\'{a}rolyi~\cite{Kar91}
%and (independently) Fon der Flaass and Kostochka~\cite{FdFK93}
proved a $(p,2)$-theorem for axis-parallel boxes in $\Re^d$, guaranteeing piercing by $O(p \log^{d-1}p)$ points. Kim et al.~\cite{KNPS06} proved in 2006 that any family of translates of a fixed convex set in $\Re^d$ that satisfies the $(p,2)$-property can be pierced by $2^{d-1}d^d(p-1)$ points; five years later, Dumitrescu and Jiang~\cite{DJ11} obtained a similar result for homothets of a convex set in $\Re^d$. In 2012, Chan and Har-Peled~\cite{CH12} proved a $(p,2)$-theorem for families of pseudo-discs in the plane, with a piercing number linear in $p$. Two years ago, Govindarajan and Nivasch~\cite{GN15} showed that any family of convex sets in the plane in which among any $p$ sets there is a pair that intersects on a given convex curve $\gamma$, can be pierced by $O(p^8)$ points.

In 2004, Matou\v{s}ek~\cite{MAT04} showed that families of sets with bounded dual VC-dimension have a bounded fractional Helly number. Recently, Pinchasi~\cite{P15} has drawn a similar relation between the union complexity and the fractional Helly number. Each of these results implies a $(p,2)$-theorem for the respective families, using the proof technique of the Alon-Kleitman $(p,q)$-theorem.
%Recently,~\cite{KST17}, the authors and Tardos showed that any family of compact convex sets in $\Re^2$ with union complexity below a quadratic bound, can be pierced by $O(p^{16})$ points.

Besides their intrinsic interest, $(p,2)$-theorems serve as a tool for obtaining other results. One such result is an \emph{improved Ramsey Theorem}. Consider, for example, a family $\F$ of $n$ axis-parallel rectangles in the plane. The classical Ramsey theorem implies that $\F$ contains a subfamily of size $\Omega(\log n)$, all whose elements are either pairwise disjoint or pairwise intersecting. As was observed by Larman et al.~\cite{PT94}, the aforementioned $(p,2)$-theorem for axis-parallel rectangles~\cite{Kar91} allows obtaining an improved bound of $\Omega(\sqrt{n/\log n})$. Indeed, either $\F$ contains a subfamily of size $\lceil \sqrt{n/\log n} \rceil$ all whose elements are pairwise disjoint, and we are done, or $\F$ satisfies the $(p,2)$-property with $p=\lceil \sqrt{n/\log n} \rceil$. In the latter case, by the $(p,2)$-theorem, $\F$ can be pierced by $O(p \log p)=O(\sqrt{n \log n})$ points. The largest among the subsets of $\F$ pierced by a single point contains at least $\Omega(\frac{n}{\sqrt{n \log n}}) = \Omega(\sqrt{n/\log n})$ rectangles, and all its elements are pairwise intersecting.

Another result that can be obtained from a $(p,2)$-theorem is an improved $(p,q)$-theorem; this will be described in detail below.

\subsection{(p,2)-theorems and (p,q)-theorems for axis-parallel rectangles and boxes}

The $(p,q)$-problem for axis-parallel boxes is almost as old as the general $(p,q)$-problem, and was studied almost as thoroughly (see the survey of Eckhoff~\cite{ECK03}). It was posed in 1960 by Hadwiger and Debrunner~\cite{HD60,HD64}, who proved that any family of axis-parallel rectangles in the plane that satisfies the $(p,q)$-property, for $p \geq q \geq 2$, can be pierced by ${{p-q+2}\choose{2}}$ points. Unlike the $(p,q)$-problem for general families of convex sets, in this problem a finite bound on the piercing number was known from the very beginning, and the research goal has been to improve the bounds on this size, denoted $\HD_{\mathrm{rect}}(p,q)$ for rectangles and $\HD_{\mathrm{d-box}}(p,q)$ for boxes in $\Re^d$.

For rectangles and $q=2$, the quadratic upper bound on $\HD_{\mathrm{rect}}(p,2)$ was improved to $O(p \log p)$ by Wegner (unpublished), and independently, by K\'{a}rolyi~\cite{Kar91}. The best currently known upper bound, which follows from a recursive formula presented by Fon Der Flaass and Kostochka~\cite{FdFK93}, is
\begin{equation}\label{Eq:(p,2)}
\HD_{\mathrm{rect}}(p,2) \leq p \lceil \log_2 p \rceil - 2^{\lceil \log_2 p \rceil} + 1,
\end{equation}
for all $p \geq 2$. On the other hand, it is known that the `optimal possible' answer $p-q+1=p-1$ fails already for $p=4$. Indeed, Wegner~\cite{Weg65} showed that $\HD_{\mathrm{rect}}(4,2)=5$, and by taking $\lceil p/3 \rceil -1$ pairwise disjoint copies of his example, one obtains a family of axis-parallel rectangles that satisfies the $(p,2)$-property but cannot be pierced by less than $\approx 5p/3$ points.

Wegner~\cite{Weg65} conjectured that $\HD_{\mathrm{rect}}(p,2)$ is linear in $p$, and is possibly even bounded by $2p-3$. While Wegner's conjecture is believed to hold (see~\cite{ECK03,GL85}), no improvement of the bound~\eqref{Eq:(p,2)} was found so far.

For rectangles and $q>2$, Hadwiger and Debrunner showed that the exact bound $\HD_{\mathrm{rect}}(p,q)=p-q+1$ holds for all $q \geq p/2+1$. Wegner~\cite{Weg65} and (independently) Dol'nikov~\cite{Dol72} presented recursive formulas that allow leveraging a $(p,2)$-theorem for axis-parallel rectangles into a tight $(p,q)$-theorem. Applying these formulas along with the Hadwiger-Debrunner quadratic upper bound on $\HD_{\mathrm{rect}}(p,2)$, Dol'nikov showed that $\HD_{\mathrm{rect}}(p,q)=p-q+1$ holds for all $2 \leq q \leq p < {{q+1}\choose{2}}$.
%As described by Eckhoff~\cite{ECK03}, the recursive formulas are `a powerful tool although difficult to work with computationally'.
Applying the formulas along with the improved bound~\eqref{Eq:(p,2)} on $\HD_{\mathrm{rect}}(p,2)$, Scheller~(\cite{Sch76}, see also~\cite{ECK03}) obtained by a computer-aided computation upper bounds on the minimal $p$ such that $\HD_{\mathrm{rect}}(p,q)=p-q+1$ holds, for all $q \leq 12$. These values suggest that $\HD_{\mathrm{rect}}(p,q)=p-q+1$ holds already for $q=\Omega(\log p)$. However, it appears that the method in which Dol'nikov proved a tight bound in the range $p<{{q+1}\choose{2}}$ does not extend to show a tight bound for all $q=\Omega(\log p)$ (even if~\eqref{Eq:(p,2)} is employed), and in fact, no concrete improvement of Dol'nikov's result was presented (see the survey~\cite{ECK03}).

For axis-parallel boxes in $\Re^d$, the aforementioned recursive formula of~\cite{FdFK93} implies the bound $\HD_{\mathrm{d-box}}(p,2) \leq O(p \log^{d-1}p)$. While it is believed that the correct upper bound is $O(p)$, the result of~\cite{FdFK93} was not improved ever since; the only advancement is a recent result of Chudnovsky et al.~\cite{CSZ17+}, who proved an upper bound of $O(p \log \log p)$ for any family of axis-parallel boxes in which for each two intersecting boxes, a corner of one is contained in the other.

\subsection{Our results}

\medskip \noindent \textbf{From $(p,2)$-theorems to $(p,q)$-theorems}

\medskip \noindent The main result of this paper is a general method for leveraging a $(p,2)$-theorem into a tight $(p,q)$-theorem, applicable to families with Helly number 2. Interestingly, the method does not assume that the sets in $\F$ are convex or compact.
\begin{theorem}\label{Thm:Main}
Let $\F$ be a family of sets in $\Re^d$ such that $\HD_{\F}(2,2)=1$. Assume that for all $2 \leq p \in \mathbb{N}$ we have $\HD_{\F}(p,2) \leq p f(p)$, where $f: [2,\infty) \rightarrow [1,\infty)$ is a differentiable function of $p$ that satisfies $f'(p) \geq \frac{\log_2 e}{p}$ and $\frac{f'(p)}{f(p)} \leq \frac{5}{p}$ for all $p \geq 2$. Denote $T_c(p)=T_c(p,f)=\min \{q: q \geq 2c \cdot f(2p/q)\}$. Then for any $p \geq q \geq 2$ such that $q \geq T_{100}(p)$, we have $\HD_{\F}(p,q)=p-q+1$.
\end{theorem}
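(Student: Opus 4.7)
I would argue by induction on $p$. The base case $p=q$ reduces to the hypothesis $\HD_\F(2,2)=1$: by the $(p,p)$-property, any two sets of $\F$ extend to a $p$-subset with a common point, so $\F$ is pairwise intersecting, and Helly-$2$ gives $\tau(\F)=1=p-q+1$.

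For the inductive step ($p>q$), I would start from the classical packing bound $\nu(\F)\le p-q+1$ (obtained by extending any $p-q+2$ pairwise disjoint sets to a $p$-subset and invoking the $(p,q)$-property). The core strategy is a ``peel-one-set'' argument: find a set $F_0 \in \F$ and a point $x_0 \in F_0$ such that $x_0$ pierces every $F \in \F$ with $F \cap F_0 \ne \emptyset$. Setting $\F^{\mathrm{in}} := \{F \in \F : F \cap F_0 \ne \emptyset\}$ and $\F^{\mathrm{out}} := \F \setminus \F^{\mathrm{in}}$, the point $x_0$ pierces all of $\F^{\mathrm{in}}$, while $\F^{\mathrm{out}}$ inherits the $(p-1,q)$-property: adjoin $F_0$ to any $p-1$ sets of $\F^{\mathrm{out}}$; by the $(p,q)$-property of $\F$ some $q$ of these $p$ sets share a common point, and since $F_0$ is disjoint from $\F^{\mathrm{out}}$ this $q$-subset must lie entirely in $\F^{\mathrm{out}}$. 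The monotonicity $T_{100}(p-1) \le T_{100}(p) \le q$ (an easy consequence of the monotonicity of $f$) lets me apply the inductive hypothesis to $\F^{\mathrm{out}}$, yielding $\tau(\F^{\mathrm{out}}) \le p-q$. Combining, $\tau(\F) \le 1 + (p-q) = p-q+1$, as required.

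\textbf{Main obstacle.} The crux of the argument is the construction of the pair $(F_0, x_0)$. For an arbitrary Helly-$2$ family, the sets intersecting a given $F_0$ need not pairwise intersect, so Helly-$2$ alone does not deliver $x_0$. This is precisely the role of the threshold $q \ge T_{100}(p)$: it is equivalent to $(2p/q)\cdot f(2p/q) \le p/100$, so applying the $(p,2)$-theorem to a suitable auxiliary subfamily of $\F$ of small packing number yields a piercing with a quantitatively strong bound. I expect to use this bound in an averaging or extremal argument over the candidates $F_0 \in \F$ --- or in a short internal iteration that peels ``obstructing'' sets using the $(p,2)$-theorem --- in order to locate an $F_0$ whose neighborhood $\F^{\mathrm{in}}$ is pairwise intersecting, so that Helly-$2$ then furnishes $x_0$. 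The technical growth conditions $f'(p) \ge (\log_2 e)/p$ and $f'(p)/f(p) \le 5/p$ serve to control this balance uniformly: the first ensures $f$ grows fast enough for the averaging step to succeed, while the second keeps the growth moderate so that $T_{100}$ remains monotone and the reduced parameters at each step of the induction continue to satisfy the theorem's hypothesis $q \ge T_{100}(\cdot)$.
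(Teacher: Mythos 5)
There is a genuine gap at the heart of your ``peel-one-set'' plan, and it is not the kind that can be deferred. You need a set $F_0 \in \F$ and a point $x_0 \in F_0$ that pierces every set meeting $F_0$. Since all such sets contain a point of $F_0$, Helly number $2$ would supply $x_0$ if and only if the family $\F^{\mathrm{in}} = \{F \in \F : F \cap F_0 \neq \emptyset\}$ is pairwise intersecting; in graph language you need the intersection graph of $\F$ to have a \emph{simplicial} vertex. Such a vertex need not exist even for axis-parallel rectangles: the four rectangles $[0,3]\times[0,1]$, $[2,3]\times[0,3]$, $[0,3]\times[2,3]$, $[0,1]\times[0,3]$ form an induced $4$-cycle, and no vertex of a $4$-cycle (or any blown-up copy of it) has a clique neighborhood. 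You flag this as the ``main obstacle'' and gesture at an averaging or peeling argument powered by the threshold $q \geq T_{100}(p)$, but no mechanism is given by which the $(p,2)$-theorem produces a simplicial $F_0$, and I do not see one; the internal iteration you allude to is precisely where a new idea is needed, and it is absent. If $F_0$ existed unconditionally, your induction would prove $\HD_\F(p,q) = p-q+1$ for all $q \geq 2$, which contradicts, e.g., Wegner's $\HD_{\mathrm{rect}}(4,2)=5$; so whatever supplies $F_0$ must genuinely use the threshold, and that is exactly the part of the argument that is missing.

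The paper's proof avoids this step entirely and is structured quite differently. It inducts downward on $q$ (not upward on $p$) and splits on the packing number $\lambda = \nu(\F)$. When $\lambda \geq 0.01p$, it applies the Wegner--Dol'nikov recursion $\HD_\F(p,q) \leq \HD_\F(p-\lambda,q-1)+\lambda-1$ (Observation~\ref{Obs:Dol}); this does exploit Helly number $2$, but globally, by piercing the neighborhood of a single transversal point with $\lambda$ points indexed by a maximum independent set --- not by finding one simplicial set. When $\lambda < 0.01p$, it uses the combinatorial dichotomy (Observation~\ref{Obs:our}) to split $\F$ into a piece handled by the weak $(p,q)$-theorem, Lemma~\ref{Lem:App-Weak-(p,q)} (this is where the $(p,2)$-hypothesis and the threshold enter), and a sparse ``bad'' piece handled by Lemma~\ref{Lem:Cute}, which exploits the smallness of $\lambda$. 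Neither branch resembles your peel-one-set scheme, and the small-$\lambda$ branch in particular is the substantive new argument that your proposal would have to reinvent to close the gap.
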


While the condition on the function $f(p)$ looks a bit ``scary'', it actually holds for any function $f$ whose growth rate (as expressed by its derivative $f'(p)$ and by the derivative of its logarithm $(\log f(p))'=\frac{f'(p)}{f(p)}$) is between the growth rates of $f(p)=\log_2 p$ and $f(p)=p^5$, including all cases needed in the current paper. The proof can be easily adjusted to work for any $f$ with a polynomial growth rate, at the expense of replacing `100' with a larger constant depending on the degree of the polynomial.

\medskip The first application of our general method is the following theorem for families of axis-parallel rectangles in the plane, obtained using~\eqref{Eq:(p,2)} as the basic $(p,2)$-theorem and some local refinements.
\begin{theorem}\label{Cor:Rect}
$\HD_{\mathrm{rect}}(p,q)=p-q+1$ holds for all $q \geq 7 \log_2 p$.
\end{theorem}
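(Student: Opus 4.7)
The idea is to feed the known $(p,2)$-bound \eqref{Eq:(p,2)} for axis-parallel rectangles into Theorem~\ref{Thm:Main}, and then tighten the constants by opening up the proof of Theorem~\ref{Thm:Main} in the special case $f(p)=\log_2 p$.

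\textbf{Step 1: verifying the hypothesis.} Since axis-parallel rectangles have Helly number $2$, we have $\HD_{\mathrm{rect}}(2,2)=1$. By~\eqref{Eq:(p,2)},
\[
\HD_{\mathrm{rect}}(p,2)\le p\lceil\log_2 p\rceil - 2^{\lceil\log_2 p\rceil}+1 \le p\log_2 p,
\]
so the choice $f(p)=\log_2 p$ satisfies $\HD_{\mathrm{rect}}(p,2)\le p\cdot f(p)$. This $f$ is differentiable on $[2,\infty)$ with $f'(p)=(\log_2 e)/p$, meeting the lower bound $f'(p)\ge(\log_2 e)/p$ with equality, while $f'(p)/f(p)=1/(p\ln p\cdot\log_2 p^{-1}\text{-terms})\le 5/p$ for all $p\ge 2$. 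So the hypotheses of Theorem~\ref{Thm:Main} are met.

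\textbf{Step 2: a crude application.} Plugging $f(p)=\log_2 p$ into the definition of $T_{100}$, the theorem already yields $\HD_{\mathrm{rect}}(p,q)=p-q+1$ whenever
\[
q \ge 200\,\log_2(2p/q),
\]
which is satisfied by $q=C\log_2 p$ for a large absolute constant $C$. This proves the qualitative statement that a tight $(p,q)$-theorem for rectangles holds in the regime $q=\Omega(\log_2 p)$.

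\textbf{Step 3: sharpening the constant from $C$ down to $7$.} This is where I would invest most of the effort and where the main obstacle lies. To shave the constant I would (a) replace the generic bound $\HD_{\mathrm{rect}}(p,2)\le p\log_2 p$ by the sharper form $p\lceil\log_2 p\rceil - 2^{\lceil\log_2 p\rceil}+1$, which saves roughly a linear term and improves $f$ by an additive amount; and (b) revisit the recursion used in the proof of Theorem~\ref{Thm:Main} with the specific function $f(p)=\log_2 p$ in hand, re-optimizing the threshold/step-size parameters that in the generic proof are set loosely in order to cover the whole range of allowed $f$. For the logarithmic $f$ several generic slack estimates (powers of $\log$, bounds of the form $2cf(2p/q)$ using $2p/q$ rather than $p/q$, etc.) collapse, and the constant multiplier $2c=200$ can be replaced by a much smaller one.

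\textbf{Expected main obstacle.} The conceptual machinery is entirely supplied by Theorem~\ref{Thm:Main}; the real work is the careful, quantitative audit needed to bring the multiplicative constant down from the hundreds to $7$. The delicate point is that the recursion underlying Theorem~\ref{Thm:Main} couples $q$ and $2p/q$ inside the logarithm, so a naive substitution loses a factor; one has to verify that the ``local refinements''---a sharper form of \eqref{Eq:(p,2)} combined with a tight re-tuning of the recursion for $f(p)=\log_2 p$---still yield a self-sustaining induction. Once the sharpened recursion is shown to close at $q=7\log_2 p$, the conclusion $\HD_{\mathrm{rect}}(p,q)=p-q+1$ follows exactly as in Theorem~\ref{Thm:Main}.
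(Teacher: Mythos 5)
Your Steps~1 and~2 are correct: $f(p)=\log_2 p$ does satisfy the hypotheses of Theorem~\ref{Thm:Main} (one small nit: the paper actually uses the plain bound $\HD_{\mathrm{rect}}(p,2)\le p\log_2 p$ throughout; the sharper $p\lceil\log_2 p\rceil - 2^{\lceil\log_2 p\rceil}+1$ plays no role in its proof), and the generic conclusion $q\ge 200\log_2(2p/q)$ indeed gives only $q\gtrsim 200\log_2 p$. The high-level plan in Step~3 — re-run the machinery of Theorem~\ref{Thm:Main} in the special case $f=\log_2$ and re-optimize parameters — is also exactly what the paper does in Section~\ref{sec:sub:proof}.

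The problem is that Step~3, which is where all the substance of the theorem lives, is not actually carried out. You describe what you \emph{would} do, but you never produce the parameter choices that make the induction close at $q\ge 7\log_2 p$, and the constant $7$ does not emerge from anything you wrote. The paper's argument hinges on concrete, non-obvious choices: the split $\lambda\ge 0.1p$ (for which $q\ge 7\log_2 p$ implies $q-1\ge 7\log_2(p-\lambda)$ because $2^{(q-1)/7}\ge 0.9\cdot 2^{q/7}\ge 0.9p$), and, when $\lambda<0.1p$, an application of Observation~\ref{Obs:our} with the asymmetric pair $(p',q')=(\lfloor 0.62p\rfloor,\,0.5q)$ followed by Lemma~\ref{Lem:Weak-(p,q)} with $c=3.5$ on each of the two resulting subfamilies and Lemma~\ref{Lem:Cute} (with the bound $\lambda<0.1p$) on the bad set, plus a base case handled by Theorem~\ref{Prop:Dol2}. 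The paper even remarks that the symmetric choice $(p',q')=(0.5p,0.5q)$ is \emph{not} sufficient to reach the constant $7$, so the asymmetric split is not a routine optimization. Without producing these choices and verifying that the resulting inequalities ($0.975p-0.5q+1\le p-q+1$ in Case~1, and $0.613p-0.5q+1+0.36p\le p-q+1$ in Case~2, both under $q\le 0.05p$) close the induction, the proposal establishes only the weak version with a large constant and not the claimed theorem.
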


\begin{remark}
Theorem~\ref{Cor:Rect} improves significantly on the best previous result of Wegner (1965) and Dol'nikov (1972), that obtained the exact value $\HD_{\mathrm{rect}}(p,q)=p-q+1$ only for $q > \sqrt{2p}$.
\end{remark}

%\noindent Actually, we show that the constant in Corollary~\ref{Cor:Rect} can be taken to be 4.

Another corollary is a tight $(p,q)$-theorem for axis-parallel boxes in $\Re^d$:
\begin{theorem}\label{Cor:Box}
$\HD_{\mathrm{d-box}}(p,q)=p-q+1$ holds for all $q>c \log^{d-1} p$, where $c$ is a universal constant.
\end{theorem}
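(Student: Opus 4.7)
The plan is to invoke Theorem~\ref{Thm:Main} with $\F$ the family of axis-parallel boxes in $\Re^d$. The two preliminaries are straightforward: axis-parallel boxes have Helly number~$2$, so $\HD_{\F}(2,2)=1$ by a coordinate-wise application of the one-dimensional Helly theorem, and the recursive bound of Fon Der Flaass and Kostochka~\cite{FdFK93} discussed above gives $\HD_{\mathrm{d-box}}(p,2)\le C_0\, p\log_2^{d-1}p$ for a constant $C_0=C_0(d)$. The natural candidate for the function in Theorem~\ref{Thm:Main} is therefore $f(p)=C\log_2^{d-1}p$.

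The one subtlety is that the growth condition $f'(p)/f(p)\le 5/p$ translates into $\ln p\ge (d-1)/5$, which fails when $d$ is large and $p$ is small. I would patch this by shifting the logarithm: set $f(p)=C\bigl(A+\log_2 p\bigr)^{d-1}$ with $A=A(d)$ chosen so that $(A+1)\ln 2\ge (d-1)/5$. A direct computation yields $f'(p)/f(p)=(d-1)/\bigl(p(A\ln 2+\ln p)\bigr)\le 5/p$ for every $p\ge 2$, while $f'(p)\ge (\log_2 e)/p$ as soon as $C(d-1)A^{d-2}\ge 1$. Taking $C$ a bit larger than $C_0$ also preserves $\HD_{\mathrm{d-box}}(p,2)\le p f(p)$, so Theorem~\ref{Thm:Main} is applicable with this $f$.

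It then remains to estimate $T_{100}(p)=\min\{q:q\ge 200\,f(2p/q)\}$. Using $\log_2(2p/q)\le 1+\log_2 p$ for $p\ge 2$ one gets $A+\log_2(2p/q)\le (A+1)(1+\log_2 p)$, hence $f(2p/q)\le C'\log_2^{d-1}p$ for some constant $C'=C'(d)$. Consequently any $q\ge 200\,C'\log_2^{d-1}p$ satisfies the defining inequality for $T_{100}(p)$, so $T_{100}(p)\le c\log_2^{d-1}p$ with $c=200\,C'$. Theorem~\ref{Thm:Main} then yields $\HD_{\mathrm{d-box}}(p,q)=p-q+1$ for every $q>c\log_2^{d-1}p$, which is exactly the statement of Theorem~\ref{Cor:Box}.

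The main (and essentially the only) obstacle is the bookkeeping needed to make $f$ meet both growth conditions uniformly on $[2,\infty)$ while still delivering a clean polylogarithmic estimate on $T_{100}(p)$; once $f$ is chosen as above, the remainder is a mechanical computation combined with a direct application of Theorem~\ref{Thm:Main}.
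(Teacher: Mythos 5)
Your approach is essentially the paper's: both proofs reduce Theorem~\ref{Cor:Box} to an application of Theorem~\ref{Thm:Main} with $f$ of the form $\mathrm{const}\cdot\log_2^{d-1}p$, after recalling that boxes have Helly number~$2$ and invoking the K\'arolyi/Fon~der~Flaass--Kostochka $(p,2)$-bound. The genuine divergence is in how the growth hypothesis $f'(p)/f(p)\le 5/p$ is handled. You correctly flag that $f(p)=C\log_2^{d-1}p$ violates this inequality for small $p$ when $d$ is large, and you repair it by the shift $f(p)=C(A+\log_2 p)^{d-1}$ with $A=\Theta(d)$, which makes the hypothesis hold uniformly on all of $[2,\infty)$ and then lets you apply Theorem~\ref{Thm:Main} as a black box. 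The paper instead keeps the unshifted $f$ and argues (somewhat loosely, by inspecting the proof of Theorem~\ref{Thm:Main}) that the inequality $f'/f\le 5/p$ is only ever invoked at arguments lying in a restricted range; this keeps the final constant free of any $d$-dependence.

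The price of your cleaner route is a real one, and you should be explicit about it: with $A=\Theta(d)$, the estimate $f(2p/q)\le C'\log_2^{d-1}p$ forces $C'=C\bigl(2(A+1)\bigr)^{d-1}=\bigl(\Theta(d)\bigr)^{d-1}$, so the resulting threshold is $q>c(d)\log_2^{d-1}p$ with $c(d)$ growing superexponentially in $d$. Theorem~\ref{Cor:Box} as stated asserts that $c$ is a \emph{universal} constant, independent of $d$, and your argument does not deliver that. (As an aside, the paper's claim that the $f'/f$ condition only matters at ``relevant'' values of $p$ is itself delicate: in the proof of Theorem~\ref{Thm:Main} the condition is used at arguments of the order $2p/q\in[2p/q,\,8p/(3q)]$ rather than at $p$ itself, and these can be as small as an absolute constant; so your instinct that something needs patching here is sound. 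Still, the fix you chose trades rigor for a weaker, $d$-dependent constant.) If you want to recover the stated universality, you would either need to carry out the ``restricted range'' argument the paper gestures at, verifying $f'(t)/f(t)\le 5/t$ precisely on the interval $[2p/q,\,8p/(3q)]$ arising for relevant pairs, or modify Theorem~\ref{Thm:Main} itself so that the hypothesis is only required on the range that its proof actually uses.
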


In the proof of Theorem~\ref{Thm:Main} we deploy the following observation of Wegner and Dol'nikov, which holds for any family $\F$ with Helly number 2:
\begin{equation}\label{Eq:Dol1}
\HD_{\F}(p,q) \leq \HD_{\F}(p-\lambda,q-1) + \lambda-1,
\end{equation}
where $\lambda=\nu(\F)$ is the packing number of $\F$.\footnote{For the sake of completeness, the proof of the observation is presented in Appendix~\ref{app:obs-proof}.} We use an inductive process in which~\eqref{Eq:Dol1} is applied as long as $\F$ contains a sufficiently large pairwise-disjoint set. To treat the case where $\F$ does not contain a `large' pairwise-disjoint set (and thus, $\nu(\F)$ is small), we make use of a combinatorial argument, based on a variant of a `combinatorial dichotomy' presented by the authors and Tardos~\cite{KST17}, which first leverages the $(p,2)$-theorem into a `weak' $(p,q)$-theorem, and then uses that $(p,q)$-theorem to show that if $\nu(\F)$ is `small' then $\tau(\F)<p-q+1$.

%For simplicity, let us briefly outline the proof technique in the case of axis-parallel rectangles in the plane. We need the following observation of Wegner and Dol'nikov, which holds for any family $\F$ with Helly number 2:
%\begin{equation}\label{Eq:Dol1}
%\HD_{\F}(p,q) \leq \HD_{\F}(p-\lambda,q-1) + \lambda-1,
%\end{equation}
%where $\lambda=\nu(\F)$ is the packing number of $\F$.\footnote{For the sake of completeness, the proof of the observation is presented in Appendix~\ref{app:obs-proof}.} The method uses an inductive process in which~\eqref{Eq:Dol1} is applied as long as a sufficiently large pairwise-disjoint set exists, and the aforementioned Hadwiger-Debrunner theorem for axis-parallel rectangles~\cite{HD60} is used when no `large' pairwise-disjoint set exists. As we show in Section~\ref{sec:sub:outline}, if one uses the same strategy then even if the Hadwiger-Debrunner theorem is replaced with the stronger bound~\eqref{Eq:(p,2)}, the obtained result is very far from the bound asserted in Theorem~\ref{Cor:Rect}.
%
%Therefore, we replace the Wegner-Dol'nikov argument with a more complex combinatorial argument, based on a variant of a `combinatorial dichotomy' presented by the authors and Tardos~\cite{KST17}, which first leverages the $(p,2)$-theorem into a `weak' $(p,q)$-theorem, and then uses that $(p,q)$-theorem to show that if $\nu(\F)$ is ``small'' then $\tau(\F)<p-q+1$.

\medskip \noindent \textbf{From $(2,2)$-theorems to $(p,2)$-theorems}

\medskip \noindent
%Basically, the Alon-Kleitman $(p,q)$-theorem leverages Helly's theorem, which is a $(d+1,d+1)$-theorem, into a $(p,d+1)$-theorem (which obviously implies a $(p,q)$-theorem for all $q \geq d+1$). In view of this,
It is natural to ask, under which conditions a $(2,2)$-theorem implies a $(p,2)$-theorem for all $p>2$.

While in general, a $(2,2)$-theorem does not imply a $(p,2)$-theorem (see an example in Appendix~\ref{app:example}), we prove such an implication for several kinds of families. Our first result here concerns families with Helly number 2.
\begin{theorem}\label{Thm:New-(p,2)1}
Let $\F$ be a family of compact convex sets in $\Re^d$ with Helly number 2. Then $\HD_{\F}(p,2) \leq p^{2d-1}/2^{d-1}$, and consequently, $\HD_{\F}(p,q)=p-q+1$ holds for all $q>c p^{1-\frac{1}{2d-1}}$, where $c=c(d)$ is a constant depending only on the dimension $d$.
\end{theorem}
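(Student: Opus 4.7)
I will establish the $(p,2)$-bound $\HD_{\F}(p,2) \le p^{2d-1}/2^{d-1}$ by induction on the dimension $d$; the ``consequently'' part will then follow by plugging into Theorem~\ref{Thm:Main}.

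\textbf{Base case $d=1$.} Here every member of $\F$ is a closed interval on the line, so Helly number 2 is automatic, and the classical greedy left-to-right sweep yields $\HD_{\F}(p,2) = \nu(\F) \le p-1 \le p$, matching $p^{1}/2^{0}$.

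\textbf{Inductive / projection step.} Given $\F$ with Helly number $2$ in $\Re^d$ satisfying the $(p,2)$-property, project $\F$ onto the $x_1$-axis. Since disjoint projections force disjoint preimages, $\nu(\pi_1(\F)) \le \nu(\F) \le p-1$; because $\pi_1(\F)$ consists of compact intervals (whose packing equals piercing), there exist at most $p-1$ hyperplanes $H_1,\dots,H_{p-1}$ orthogonal to the $x_1$-axis such that every $S\in\F$ meets some $H_i$. Writing $\F_i = \{S\in\F : S\cap H_i\ne\emptyset\}$, we have $\F=\bigcup_i \F_i$, and my goal is to bound $\tau(\F_i)$ using points on $H_i$. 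Consider the restricted family $\F_i|_{H_i}=\{S\cap H_i : S\in \F_i\}$, which sits in $H_i \cong \Re^{d-1}$ and still satisfies $\nu(\F_i|_{H_i}) \le p-1$. The aim is to show $\tau(\F_i) \le (p/2)\cdot T_{d-1}(p)$, where $T_{d-1}(p):=\HD(p,2)$ is the inductive quantity. This would give the recurrence
\[
T_d(p) \;\le\; (p-1)\cdot \tfrac{p}{2}\cdot T_{d-1}(p) \;\le\; \tfrac{p^2}{2}\, T_{d-1}(p),
\]
which together with $T_1(p) \le p$ telescopes to $T_d(p) \le p^{2d-1}/2^{d-1}$.

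\textbf{Main obstacle.} The crux is bounding $\tau(\F_i)$ on each hyperplane $H_i$. The natural wish is to apply the inductive hypothesis to $\F_i|_{H_i}$, but Helly number $2$ is \emph{not} preserved under restriction: pairwise intersecting members $S_j\cap H_i$ give pairwise intersecting $S_j$ in $\Re^d$, hence a common point, but that common point need not lie in $H_i$. Overcoming this is the heart of the proof. My plan is a secondary projection/pairing argument: restrict $\F_i$ to $H_i$, project again along another coordinate direction, apply the $1$-dimensional greedy bound to get $\le p-1$ piercing values, and at each step exploit that the ``missing'' Helly-2 only fails by a pairwise-witness that itself satisfies a $(p,2)$-type condition in $\Re^d$. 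The $(p/2)$ factor (rather than the naive $p-1$) should come from pairing up antipodal witnesses on each axis via an averaging/Hadwiger--Debrunner style argument at threshold $q\approx p/2+1$, exactly matching the recurrence $T_d(p)\le (p^2/2)T_{d-1}(p)$.

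\textbf{Consequence for $\HD_{\F}(p,q)$.} Set $f(p) = p^{2d-2}/2^{d-1}$ so that $\HD_{\F}(p,2) \le p\,f(p)$. The function $f$ has polynomial growth rate, so by the remark following Theorem~\ref{Thm:Main}, that theorem applies with a constant $c=c(d)$ depending on $d$. Solving the threshold $q \ge 2c\cdot f(2p/q) = 2c\cdot 2^{d-1}\, p^{2d-2}/q^{2d-2}$ gives $q^{2d-1} \ge c'(d)\, p^{2d-2}$, i.e., $q \ge c(d)\, p^{1-1/(2d-1)}$, as claimed.
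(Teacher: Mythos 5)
Your base case and the deduction of the $(p,q)$-consequence from Theorem~\ref{Thm:Main} are both fine and match the paper. The problem is the inductive step, where you correctly identify the central obstacle — Helly number 2 is not preserved under restriction to a hyperplane — but do not actually overcome it. Your plan is to project onto the $x_1$-axis, obtain $\le p-1$ orthogonal hyperplanes $H_i$, restrict $\F_i$ to $H_i$, and then apply the induction hypothesis in $H_i\cong\Re^{d-1}$. But for that you need $\F_i|_{H_i}$ to have Helly number $2$, and as you note yourself, two members of $\F_i$ that intersect each other and both meet $H_i$ need not intersect \emph{on} $H_i$: their common point, guaranteed by Helly-2 in $\Re^d$, may lie to one side of $H_i$. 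The ``secondary projection/pairing'' sketch you offer as a fix is not a proof; it is not clear what is being paired, or why an averaging or Hadwiger--Debrunner argument at $q\approx p/2+1$ would recover the lost Helly property, and the target $\tau(\F_i)\le (p/2)\,T_{d-1}(p)$ is asserted rather than derived. As written, the recurrence $T_d(p)\le (p^2/2)\,T_{d-1}(p)$ is therefore not established.

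The paper's actual proof resolves exactly this obstacle, but by a different decomposition. Rather than projecting, it fixes a \emph{maximum} pairwise-disjoint subfamily $\mathcal{S}\subset\F$ (of size $\le p-1$). Sets meeting exactly one $C_0\in\mathcal{S}$ are all pairwise intersecting with each other and with $C_0$ (else $\mathcal{S}$ would not be maximum), so Helly-2 pierces each such block by one point, giving $p-1$ points for this part. For sets meeting two distinct $C,C'\in\mathcal{S}$, one takes a hyperplane $H$ that \emph{strictly separates} $C$ from $C'$ and restricts to $H$. The crucial point you are missing is why Helly-2 \emph{is} preserved in this specific restriction: if $A,A'$ both meet $C$ and both meet $C'$, and $A\cap A'\ne\emptyset$, then by Helly-2 applied to $\{A,A',C\}$ and to $\{A,A',C'\}$ the convex set $A\cap A'$ contains a point of $C$ and a point of $C'$, hence points on both sides of $H$, hence a point of $H$. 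So $A\cap H$ and $A'\cap H$ genuinely intersect, and the same argument with a full pairwise-intersecting tuple shows the restricted family on $H$ has $\HD(2,2)=1$. Summing over the $\binom{p-1}{2}$ pairs $(C,C')$ and applying induction on each $H$ gives the bound. In short: the separating-hyperplane-between-two-independent-sets trick, not a generic coordinate projection, is what makes the induction legal, and that is the idea your proposal lacks.
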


%\noindent The proof of Theorem~\ref{Thm:New-(p,2)1} uses a purely geometric inductive argument.
%along with a strategy for treating maximal independent sets in graphs that was initially used by Wagon~\cite{Wag80} in the context of coloring problems and $\chi$-boundness.

The second result only assumes the existence of a $(2,2)$-theorem.
\begin{theorem}\label{Thm:New-(p,2)2}
Let $\F$ be a family of compact convex sets in $\Re^d$ that admits a $(2,2)$-theorem. Then:
\begin{enumerate}
\item $\F$ admits a $(p,2)$-theorem for piercing with a bounded number $s=s(p,d)$ of points.

\item If $d=2$, then $\HD_{\F}(p,2) = O(p^8 \log^2 p)$.

\item If $d=2$ and $\F$ has a bounded VC-dimension\footnote{The definition of VC-dimension is recalled in Appendix~\ref{app:(p,2)}.} then $\HD_{\F}(p,2)=O(p^4 \log^2 p)$.
\end{enumerate}
\end{theorem}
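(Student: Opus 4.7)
My plan is to treat all three parts via a common strategy: derive a fractional Helly-type statement from the $(2,2)$-theorem and then apply the Alon--Kleitman machinery (fractional transversal via LP duality, then $\eps$-net rounding). The hypothesis $\HD_{\F}(2,2) = s_0 < \infty$ provides the base case that every pairwise-intersecting subfamily of $\F$ is pierced by $s_0$ points, and compact convex sets in $\Re^d$ have bounded dual VC-dimension (via Radon's theorem), so by Matou\v{s}ek's theorem cited in the introduction, $\F$ has bounded fractional Helly number.

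For part 1, I would argue qualitatively. Combining the two ingredients above yields a fractional Helly theorem: there exist $k=k(d)$ and $\beta = \beta(\alpha, s_0, d) > 0$ such that whenever a subfamily of $\F$ of size $n$ has at least $\alpha \binom{n}{k}$ $k$-subsets with a common point, some single point lies in $\beta n$ sets. The standard Alon--Kleitman scheme then bounds $\tau(\F)$ purely in terms of $p$ and $d$, giving the asserted finite $s=s(p,d)$.

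For parts 2 and 3 ($d=2$), I would make the derivation quantitative. The key step is to establish fractional Helly of order $2$ with an explicit parameter $\beta(\alpha) = (\alpha/s_0)^{O(1)}$: from $\alpha\binom{n}{2}$ intersecting pairs one extracts, using the bounded dual VC-dim of plane convex sets, a pairwise-intersecting subfamily of size $\Omega_\alpha(n)$, which is pierced by $s_0$ points via the $(2,2)$-theorem, and pigeonhole yields a single point in $\Omega(n/s_0)$ sets. Alon--Kleitman LP rounding then gives a fractional transversal $\tau^*(\F) = O(p^c \log p)$, with $c=4$ in the bounded VC-dim case (part 3) and $c=8$ in the general case (part 2); the difference is controlled by the $\eps$-net size for the piercing-candidate hypergraph (Haussler--Welzl bounds of the form $O((1/\eps)\log(1/\eps))$ when the VC-dim is explicit, versus a coarser union-complexity bound in the general convex case). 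Converting $\tau^*$ to an integer transversal via an $\eps$-net loses another $\log \tau^*$ factor, giving the claimed $O(p^c \log^2 p)$ bounds.

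The main obstacle is the ``constant-fraction extraction'' step: producing a pairwise-intersecting subfamily of size $\Omega_\alpha(n)$ from $\alpha\binom{n}{2}$ intersecting pairs in the plane. A naive Ramsey argument extracts only an $\Omega(\log n)$-size pairwise-intersecting subfamily, which is too small to feed into the $(2,2)$-theorem for the fractional Helly conclusion. One must instead exploit the convex-geometric and VC-dimensional structure of the intersection hypergraph, most likely following Matou\v{s}ek's original proof technique that derives fractional Helly from bounded dual VC-dimension.
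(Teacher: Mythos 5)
Your plan correctly identifies the main obstacle — converting a $(p,2)$-property (many intersecting \emph{pairs}) into something usable by the Alon--Kleitman machinery, which needs intersection structure of higher order — but the tool you propose to clear it does not exist. Compact convex sets in $\Re^d$ ($d\geq 2$) do \emph{not} have bounded dual VC-dimension: their primal VC-dimension is infinite (points in convex position are all shattered), and by the classical inequality relating a range space to its dual, finite dual VC-dimension would force finite primal VC-dimension, a contradiction. This is precisely why the Alon--Kleitman proof needs the \emph{weak} $\eps$-net theorem rather than Haussler--Welzl, and it means that Matou\v{s}ek's theorem, which assumes bounded dual VC-dimension, cannot be invoked for a general family of convex sets. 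Your ``constant-fraction extraction'' step therefore has no proof as written, and no amount of appeal to ``VC-dimensional structure of the intersection hypergraph'' repairs it for parts~1 and~2, where no VC-dimension bound on $\F$ is assumed. (Part~3 is different: there $\F$ is \emph{assumed} to have bounded VC-dimension, and there the paper does indeed replace the weak $\eps$-net theorem by Haussler--Welzl, exactly as you suggest — but even there the extraction step needs a separate argument.)

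The paper's actual route around the obstacle is combinatorial rather than fractional-Helly-theoretic: it uses Ramsey-type theorems to \emph{upgrade the $q$}, not to extract a linear-size clique. In $\Re^d$ (Part~1) one applies classical Ramsey: any subfamily of size $4^p$ contains either $p$ pairwise-disjoint sets (impossible by the $(p,2)$-property) or $p$ pairwise-intersecting ones; piercing the latter by $t=\HD_\F(2,2)$ points and pigeonholing gives an intersecting $\lceil p/t\rceil$-tuple, so $\F$ satisfies the $(4^p,\lceil p/t\rceil)$-property, to which the quantitative Alon--Kleitman bound of~\cite{KST17} applies. In the plane (Parts 2--3) the exponential Ramsey number is replaced by the polynomial Ramsey-type theorem of Larman~\etal~\cite{PT94} (among $k\ell^4$ plane convex sets there are $k$ pairwise-intersecting or $\ell$ pairwise-disjoint), giving the $(p^5,\lceil p/t\rceil)$-property and hence the $O(p^8\log^2 p)$ bound, improved to $O(p^4\log^2 p)$ when an explicit VC bound allows $\eps$-nets in place of weak $\eps$-nets. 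Note also that even the Ramsey approach does not produce a pairwise-intersecting subfamily of size $\Omega(n)$ from density $\alpha$ as you hope for — it produces one of size $\Theta(n^{1/5})$ from a subfamily of size $n$ — which is why the target is a $(P,Q)$-property with enlarged $P$, not a single-step fractional Helly statement of order~$2$.
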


%In the first stage of the proof we leverage the $(p,2)$-property into a $(p',q)$-property for some $q \geq d+1$. In Part~(1) we achieve this via the classical Ramsey theorem, and in Parts~(2,3) we achieve a much lower value of $p'$ using the Ramsey-type theorem for convex families in the plane of Larman et al.~\cite{PT94}. In the second stage, we deduce a piercing number via a $(p,q)$-theorem. In Parts~(1,2) we do this with a direct application of the quantitative bounds for the Alon-Kleitman $(p,q)$-theorem obtained in~\cite{KST17}. In Part~(3), we go into the proof of the Alon-Kleitman theorem, and replace the weak-$\epsilon$-net theorem~\cite{ABFK} used by Alon and Kleitman with the $\epsilon$-net theorem for families with a bounded VC-dimension~\cite{HW}.

Since families with a sub-quadratic union complexity admit a $(2,2)$-theorem and have a bounded VC-dimension, Theorem~\ref{Thm:New-(p,2)2}(3) implies that any family $\F$ of regions in the plane with a sub-quadratic union complexity satisfies $\HD_{\F}(p,2)=O(p^4 \log^2 p)$. This significantly improves over the bound $\HD_{\F}(p,2)=O(p^{16})$ that was obtained for such families in~\cite{KST17}.

%\medskip \noindent \textbf{An improved $(p,2)$-theorem for families with a small union complexity}
%
%\medskip \noindent In~\cite{KST17}, the authors and Tardos proved a $(p,2)$-theorem guaranteeing piercing with $O(p^{16})$ points for families whose union complexity is below a quadratic bound. Using a refinement of the techniques of~\cite{KST17} we obtain the following significantly improved bound:
%\begin{theorem}
%Let $\F$ be a family of compact convex sets in the plane whose union complexity is below a quadratic bound. Then $\HD_{\F}(p,2)=O(p^4 \log^2 p)$.
%\end{theorem}
%We note that a $(p,2)$-theorem for families with a bounded union complexity can be derived also from the (much more general) result of Pinchasi~\cite{P15}. For a family with union complexity $Cp^{2-\delta}$ where $C$ is a constant, the transversal that can be derived from~\cite{P15} is of size $\Theta((Cp)^{2/\delta})$; our result improves the bound on the transversal size to $p^{4} \log^2 p C^{4/\delta}$.

\subsection{Organization of the paper}

In Section~\ref{sec:rectangles} we demonstrate our general method for leveraging a $(p,2)$-theorem into a tight $(p,q)$-theorem and prove Theorem~\ref{Cor:Rect}. Our new $(p,2)$-theorem for compact convex sets with Helly number 2 (i.e., Theorem~\ref{Thm:New-(p,2)1} above) is presented in Section~\ref{sec:(p,2)}. Finally, the proof of Theorem~\ref{Thm:Main} is presented in Appendix~\ref{app:main}, and the proof of Theorem~\ref{Thm:New-(p,2)2} is presented in Appendix~\ref{app:(p,2)}.

\section{From (p,2)-theorems to tight (p,q)-theorems}
\label{sec:rectangles}

In this section we present our main theorem which allows leveraging a $(p,2)$-theorem into a tight $(p,q)$-theorem, for families $\F$ that satisfy $\HD_{\F}(2,2)=1$. As the proof of the theorem in its full generality is somewhat complex, we present here the proof in the case of axis-parallel rectangles in the plane, and provide the full proof in Appendix~\ref{app:main}. Before presenting the proof of the theorem, we briefly present the Wegner-Dol'nikov argument (parts of which we use in our proof) in Section~\ref{sec:sub:WD}, provide an outline of our method in Section~\ref{sec:sub:outline}, and prove two preparatory lemmas in Section~\ref{sec:sub:lemmas}.

\subsection{The Wegner-Dol'nikov method}
\label{sec:sub:WD}

As mentioned in the introduction, Wegner and (independently) Dol'nikov leveraged the Hadwiger-Debrunner $(p,2)$-theorem for axis-parallel rectangles in the plane, which asserts that $\HD_{\mathrm{rect}}(p,2) \leq {{p}\choose{2}}$, into a tight $(p,q)$-theorem, asserting that $\HD_{\mathrm{rect}}(p,q) \leq p-q+1$ holds for all $p \geq q \geq 2$ such that $p<{{q+1}\choose{2}}$. The heart of the Wegner-Dol'nikov argument is the following observation.\footnote{Observation~\ref{Obs:Dol} is described in Equation~\eqref{Eq:Dol1} above. For the sake of completeness, we present its proof in Appendix~\ref{app:obs-proof}.}
%We present also the proof, for sake of completeness.
\begin{observation}\label{Obs:Dol}
Let $\F$ be a family that satisfies $\HD_{\F}(2,2)=1$, and put $\lambda=\nu(\F)$. Then
\[
\HD_{\F}(p,q) \leq \HD_{\F}(p-\lambda,q-1) + \lambda-1.
\]
\end{observation}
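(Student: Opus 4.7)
The plan is to pick a maximum pairwise disjoint subfamily $\{A_1, \ldots, A_\lambda\} \subseteq \F$ (so $|\{A_1, \ldots, A_\lambda\}| = \nu(\F) = \lambda$), set $\F' = \F \setminus \{A_1, \ldots, A_\lambda\}$, and combine two intermediate bounds into the desired recursion.

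First, I would show that $\F'$ inherits the $(p-\lambda, q-1)$-property, which yields $\tau(\F') \leq \HD_\F(p-\lambda, q-1)$. Given any $p-\lambda$ sets from $\F'$, augmenting them by $A_1, \ldots, A_\lambda$ produces $p$ sets of $\F$; by the $(p,q)$-property some $q$ of these share a common point, and since the $A_i$'s are pairwise disjoint at most one of them can lie in such a $q$-tuple, so at least $q-1$ of the chosen sets from $\F'$ share a common point.

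Second, I would establish the inequality $\tau(\F) \leq \tau(\F') + \lambda - 1$, which together with the first step gives the claim. The trivial bound (pierce $\F'$ optimally, then pierce each $A_i$ with one further point) yields $\tau(\F') + \lambda$, so the task is to save a single point. The plan is to exploit the Helly-2 hypothesis $\HD_\F(2,2) = 1$: given an optimal piercing $P = \{x_1, \ldots, x_s\}$ of $\F'$, each $x_k$ witnesses a pairwise intersecting subfamily $\F'_k \subseteq \F'$, and by the maximality of $\{A_1, \ldots, A_\lambda\}$ every set of $\F'$ meets some $A_i$. I would then argue that there exists a compatible pair $(k, i)$ for which $A_i$ meets every set of $\F'_k$; by Helly-2 this produces a point $y \in A_i \cap \bigcap \F'_k$, so replacing $x_k$ by $y$ gives a piercing of $\F' \cup \{A_i\}$ of the same size $s$. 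Piercing each of the remaining $\lambda - 1$ sets $A_j$, $j \neq i$, by one point then completes a piercing of $\F$ of size $s + \lambda - 1$.

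The hard part will be establishing the existence of the compatible pair $(k, i)$. I expect to argue this by contradiction: if no such pair existed then for every $k$ and every $i$ one could pick a witness $B_{k, i} \in \F'_k$ with $B_{k, i} \cap A_i = \emptyset$. Combining the $A_i$'s with suitable $B_{k, i}$'s (padded if necessary by other members of $\F$) into a $p$-tuple should contradict the $(p, q)$-property, using the structural rigidity provided by Helly-2: inside each $\F'_k$ all witnesses $B_{k, \cdot}$ share the point $x_k$, which severely constrains how a $q$-fold intersection inside the chosen $p$-tuple can avoid the disjointness relations forced by the $B_{k,i}$'s.
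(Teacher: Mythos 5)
Your first step (showing $\F'$ inherits the $(p-\lambda,q-1)$-property) is correct and matches the paper. Your second step, however, has a genuine gap: the ``compatible pair'' you need does not exist in general, and the $(p,q)$-property is not strong enough to force it. Here is a concrete obstruction. Take $\lambda=2$ with $A_1, A_2$ disjoint, and $\F'=\{B_1,B_2\}$ where $B_1\cap B_2\neq\emptyset$ (so a single point $x_1$ pierces $\F'$ and $\F'_1=\{B_1,B_2\}$), but $B_1$ meets only $A_1$ and $B_2$ meets only $A_2$. This is easy to realize with axis-parallel rectangles, and $\F=\{A_1,A_2,B_1,B_2\}$ satisfies the $(3,2)$-property. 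Yet neither $A_1$ nor $A_2$ meets both members of $\F'_1$, so your compatible pair $(k,i)$ does not exist, and your contradiction plan cannot go through because there is simply no contradiction to be had.

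The paper's proof sidesteps this by never asking a single $A_i$ to ``absorb'' an entire $\F'_k$. Instead, it fixes \emph{one} point $x$ in the transversal $T$ of $\F'$, lets $\mathcal{X}$ be the sets of $\F'$ containing $x$, and decomposes $\mathcal{X}=\bigcup_{B\in\mathcal{S}}\mathcal{X}_B$ by which member of the disjoint family $\mathcal{S}=\{A_1,\dots,A_\lambda\}$ each set in $\mathcal{X}$ meets (maximality guarantees each meets at least one). Each $\mathcal{X}_B\cup\{B\}$ is pairwise-intersecting (they share $x$, and all meet $B$), hence pierced by a single point by the $\HD_\F(2,2)=1$ hypothesis. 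The resulting $\lambda$ points pierce $\mathcal{X}\cup\mathcal{S}$ simultaneously; adding them to $T\setminus\{x\}$ yields a transversal of size $|T|-1+\lambda$. The ``save'' comes from letting $\lambda$ new points do double duty (covering both $\mathcal{S}$ and the sets formerly covered by $x$), not from finding a single $A_i$ compatible with some $\F'_k$. In the counterexample above, this gives two points: one in $B_1\cap A_1$, one in $B_2\cap A_2$. I would suggest replacing your second step with this decomposition.
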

%\begin{proof}
%The slightly weaker bound $\HD_{\F}(p,q) \leq \HD_{\F}(p-\lambda,q-1) + \lambda$ holds trivially, and does not even require the assumption $\HD_{\F}(2,2)=1$. Indeed, if $S$ is a pairwise-disjoint subset of $\F$ of size $\lambda$, then $\F \setminus S$ satisfies the $(p-\lambda,q-1)$-property, and thus, can be pierced by $\HD_{\F}(p-\lambda,q-1)$ points. As $S$ clearly can be pierced by $\lambda$ points, we obtain $\HD_{\F}(p,q) \leq \HD_{\F}(p-\lambda,q-1) + \lambda$.  To get the important reduction by 1, we take any transversal $T$ of $\F$ and consider the subset of $\F \setminus S$ of size
%$\HD_{\F}(p,q) \leq \HD_{\F}(p-\lambda,q-1) + \lambda$
%note that by the maximality of $S$, any other member of $\F$ intersects some member of $S$. Take a transversal of $\F$

Using Observation~\ref{Obs:Dol}, Wegner and Dol'nikov proved the following theorem, which we will use in our proof below.
\begin{theorem}[\cite{Dol72}, Theorem~2]\label{Prop:Dol2}
Let $\F$ be a family of axis-parallel rectangles in the plane. Then for any $p \geq q \geq 2$ such that $p<{{q+1}\choose{2}}$, we have
$\HD_{\F}(p,q) =p-q+1$.
\end{theorem}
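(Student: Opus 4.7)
The plan is to prove the upper bound $\HD_{\mathrm{rect}}(p,q) \leq p-q+1$ by induction on $q$; the matching lower bound $\HD_{\mathrm{rect}}(p,q) \geq p-q+1$ is the standard construction from the remark after Theorem~\ref{thm:HD-thm}. The base case $q=2$ forces $p<\binom{3}{2}=3$, hence $p=2$, and the bound $\tau(\F) \leq 1$ then follows from the $(2,2)$-property together with the Helly number $2$ of axis-parallel rectangles.

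For the inductive step, fix $q \geq 3$, assume the claim for $q-1$, and let $\F$ satisfy the $(p,q)$-property with $p<\binom{q+1}{2}$. Write $\lambda := \nu(\F)$. A short pigeonhole argument first shows $\lambda \leq p-q+1$: taking $\lambda$ pairwise disjoint sets $G_1,\ldots,G_\lambda$ together with any $q-2$ additional members of $\F$ yields $\lambda+q-2$ sets, among which any $q$ with a common point can contain at most one $G_i$; this forces $\lambda+q-2 \leq p-1$. The proof then splits on whether $\lambda$ lies below or above the threshold $u := p-\binom{q}{2}$, which by hypothesis satisfies $0 \leq u \leq q-1$.

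In the first regime, $\lambda \leq u$ (nontrivial only when $p > \binom{q}{2}$). Since $\F$ has the $(\lambda+1,2)$-property by the definition of $\nu$, the original Hadwiger--Debrunner $(p,2)$-bound for rectangles, $\HD_{\mathrm{rect}}(p',2) \leq \binom{p'}{2}$, gives $\tau(\F) \leq \binom{\lambda+1}{2}$. Using the identities $\binom{\lambda+1}{2} = \binom{\lambda}{2} + \lambda$ and $p-q+1 = u + \binom{q-1}{2}$, the target inequality $\binom{u+1}{2} \leq p-q+1$ reduces to $\binom{u}{2} \leq \binom{q-1}{2}$, which holds since $u \leq q-1$.

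In the second regime, $\lambda \geq u+1$, so $p-\lambda \leq \binom{q}{2}-1 < \binom{q}{2}$; combined with $p-\lambda \geq q-1$ (from $\lambda \leq p-q+1$), this places $(p-\lambda,q-1)$ inside the range where the induction hypothesis applies, giving $\HD_{\F}(p-\lambda,q-1) \leq (p-\lambda)-(q-1)+1$. Plugging into Observation~\ref{Obs:Dol},
\[
\HD_{\F}(p,q) \leq \HD_{\F}(p-\lambda,q-1) + \lambda - 1 \leq p-q+1,
\]
closing the induction. The main conceptual step is the identification of the threshold $u$: it is simultaneously the largest value of $\lambda$ for which the crude $(p,2)$-bound $\binom{\lambda+1}{2}$ already beats $p-q+1$, and the smallest value for which Observation~\ref{Obs:Dol} keeps $(p-\lambda,q-1)$ inside the inductive regime $p' < \binom{q'+1}{2}$ — a coincidence forced precisely by the hypothesis $p < \binom{q+1}{2}$.
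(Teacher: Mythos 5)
Your proof is correct and follows essentially the same strategy as the paper's: induct on $q$, use Observation~\ref{Obs:Dol} together with the Hadwiger--Debrunner $(p,2)$-bound $\binom{\lambda+1}{2}$, and split on the size of $\lambda=\nu(\F)$. Your explicit identification of the threshold $u = p - \binom{q}{2}$ and your pigeonhole argument establishing $\lambda \leq p-q+1$ (which the paper leaves implicit but is needed so that $(p-\lambda,q-1)$ is a legal pair for the inductive step) make the same calculation slightly more transparent, but the decomposition and tools are the same.
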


\begin{proof}
The proof is by induction. The induction basis is $q=2$: for this value, the assertion is relevant only for $p=2$, and we indeed have $\HD_{\mathrm{rect}}(2,2)=1=2-2+1$ as asserted.

For the inductive step, we consider $\lambda=\nu(\F)$. Note that $\F$ satisfies the $(\lambda+1,2)$-property. Thus, if ${{\lambda+1}\choose{2}} \leq p-q+1$ then we have $\HD_{\F}(p,q) \leq p-q+1$ by the aforementioned Hadwiger-Debrunner $(p,2)$-theorem for axis-parallel rectangles. On the other hand, if ${{\lambda+1}\choose{2}} > p-q+1$ then it can be easily checked that $p-\lambda<{{q}\choose{2}}$, so by the induction hypothesis we have $\HD_{\F}(p-\lambda,q-1) = (p-\lambda)-(q-1)+1$. By Observation~\ref{Obs:Dol}, this implies $\HD_{\F}(p,q) \leq \HD_{\F}(p-\lambda,q-1)+\lambda-1=p-q+1$, as asserted.
\end{proof}

\subsection{Outline of our method}
\label{sec:sub:outline}

Let $\F$ be a family of axis-parallel rectangles in the plane. Instead of leveraging the Hadwiger-Debrunner $(p,2)$-theorem for $\F$ into a $(p,q)$-theorem as was done by Wegner and Dol'nikov, we would like to leverage the stronger bound $\HD_{\mathrm{rect}}(p,2)\leq p \log_2 p$ which follows from~\eqref{Eq:(p,2)}. We want to deduce that $\HD_{\mathrm{rect}}(p,q)=p-q+1$ holds for all $q \geq 7\log p$.

Basically, we would like to perform an inductive process similar to the process applied in the proof of Theorem~\ref{Prop:Dol2}. As above, put $\lambda=\nu(\F)$. If $\lambda$ is `sufficiently large' (namely, if $q-1 \geq 7\log_2 (p-\lambda)$), we apply the recursive formula $\HD_{\F}(p,q) \leq \HD_{\F}(p-\lambda,q-1)+\lambda-1$ and use the induction hypothesis to bound $\HD_{\F}(p-\lambda,q-1)$. Otherwise, we would like to use the improved $(p,2)$-theorem to deduce that $\F$ can be pierced by at most $p-q+1$ points.

However, since we want to prove the theorem in the entire range $q \geq 7\log_2 p$, in order to apply the induction hypothesis to  $\HD_{\F}(p-\lambda,q-1)$, $\lambda$ must be at least \emph{linear in $p$} (specifically, we need $\lambda \geq 0.1p$, as is shown below). Thus, in the `otherwise' case we have to show that if $\lambda < 0.1p$, then $\F$ can be pierced by at most $p-q+1$ points. If we merely use the fact that $\F$ satisfies the $(\lambda+1,2)$-property and apply the improved $(p,2)$-theorem, we only obtain that $\F$ can be pierced by $O(p\log p)$ points -- significantly weaker than the desired bound $p-q+1$.

Instead, we use a more complex procedure, partially based on the following observation, presented in~\cite{KST17} (and called there a `combinatorial dichotomy'):
\begin{observation}\label{Obs:our}
Let $\F$ be a family that satisfies the $(p,q)$-property. For any $p' \leq p,q' \leq q$ such that $q' \leq p'$, either $\F$ satisfies the $(p',q')$-property, or there exists $S \subset \F$ of size $p'$ that does not contain an intersecting $q'$-tuple. In the latter case, $\F \setminus S$ satisfies the $(p-p',q-q'+1)$-property.
\end{observation}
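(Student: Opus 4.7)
The first alternative in the dichotomy is simply the negation of the second. Indeed, the $(p',q')$-property asserts that every $p'$-subfamily of $\F$ contains an intersecting $q'$-tuple; its failure is, by definition, the existence of some $p'$-subfamily $S \subseteq \F$ that contains no intersecting $q'$-tuple. So nothing needs to be proved for the first two sentences of the observation; the entire content lies in the last sentence, which asserts that when such an $S$ exists, $\F \setminus S$ satisfies the $(p-p',\,q-q'+1)$-property.

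To establish that implication, I would fix an arbitrary subfamily $T \subseteq \F \setminus S$ with $|T| = p - p'$ and look at the disjoint union $S \cup T \subseteq \F$, which has cardinality exactly $p$. Applying the assumed $(p,q)$-property of $\F$ to this $p$-subfamily produces an intersecting $q$-tuple $Q \subseteq S \cup T$. Writing $Q = (Q \cap S) \sqcup (Q \cap T)$, the piece $Q \cap S$ is itself an intersecting subfamily of $S$; by the defining property of $S$ it cannot have size $q'$ or more, so $|Q \cap S| \leq q' - 1$. Hence
\[
|Q \cap T| \;\geq\; q - (q' - 1) \;=\; q - q' + 1,
\]
which exhibits an intersecting $(q - q' + 1)$-tuple inside $T$. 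Since $T \subseteq \F \setminus S$ was arbitrary, $\F \setminus S$ satisfies the $(p-p',\,q-q'+1)$-property, as claimed.

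The whole argument is a one-line pigeonhole on how the guaranteed intersecting $q$-tuple splits between $S$ and $T$, so there is no genuine obstacle. The only point worth flagging is that the proof is purely combinatorial: it invokes neither convexity, compactness, nor any Helly-type hypothesis on $\F$, and uses only the assumed $(p,q)$-property together with the defining property of $S$. This is what makes the observation usable in the proof of Theorem~\ref{Thm:Main} for the abstract, not necessarily geometric, families that the main theorem treats.
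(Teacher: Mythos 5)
Your proof is correct. The paper does not actually spell out a proof of Observation~\ref{Obs:our}; it only cites~\cite{KST17}, where this ``combinatorial dichotomy'' was introduced, so there is nothing in the paper to compare against line by line. Your argument is the natural pigeonhole one and is exactly the intended reasoning: for any $T \subseteq \F \setminus S$ with $|T| = p - p'$, the $p$-set $S \cup T$ yields an intersecting $q$-tuple $Q$ by the $(p,q)$-property; since $Q \cap S$ is an intersecting subfamily of $S$, the defining property of $S$ forces $|Q \cap S| \leq q' - 1$, hence $|Q \cap T| \geq q - q' + 1$, and $Q \cap T$ (or any $(q-q'+1)$-subset of it) is an intersecting $(q-q'+1)$-tuple inside $T$. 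You also correctly note that $|\F \setminus S| \geq p - p'$ follows from $|\F| \geq p$, so the $(p-p', q-q'+1)$-property statement is well-posed, and that the argument is purely combinatorial and needs no convexity or compactness, which is precisely why the observation is applicable in the abstract setting of Theorem~\ref{Thm:Main}.
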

First, we use Observation~\ref{Obs:our} to leverage the $(p,2)$-theorem by an inductive process into a `weak' $(p,q)$-theorem that guarantees piercing with $p-q+1+O(p)$ points, for all $q =\Omega(\log p)$. We then show that if $\lambda <0.1p$ then $\F$ can be pierced by at most $p-q+1$ points, by combining the weak $(p,q)$-theorem, another application of Observation~\ref{Obs:our}, and a lemma which exploits the size of $\lambda$.

\subsection{The two main lemmas used in the proof}
\label{sec:sub:lemmas}

Our first lemma leverages the $(p,2)$-theorem $\HD_{\mathrm{rect}}(p,2)\leq p \log_2 p$ into a weak $(p,q)$-theorem, using Observation~\ref{Obs:our}.
\begin{lemma}\label{Lem:Weak-(p,q)}
Let $\F$ be a family of axis-parallel rectangles in the plane. Then for any $c > 0$ and for any $p \geq q \geq 2$ such that $q \geq c \log_2 p$, we have
\[
\HD_{\F}(p,q) \leq p-q+1+\frac{2p}{c}.
\]
\end{lemma}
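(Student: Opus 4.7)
The plan is to establish the bound by induction on $q$, applying Observation~\ref{Obs:our} once per inductive step to bootstrap the $(p,2)$-theorem of equation~\eqref{Eq:(p,2)}. For the base case $q = 2$, the hypothesis $q \geq c\log_2 p$ forces $c \leq 2/\log_2 p$, hence $2p/c \geq p\log_2 p$, and the $(p,2)$-theorem directly gives $\HD_{\mathrm{rect}}(p,2) \leq p\log_2 p \leq (p-1) + 2p/c$, as desired.

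For the inductive step, I would apply Observation~\ref{Obs:our} to $\F$ with $p' := \lceil 2p/q \rceil$ and $q' := 2$. In the first alternative, $\F$ satisfies the $(p',2)$-property, so the $(p,2)$-theorem pierces $\F$ with $p'\log_2 p' \leq (2p/q + 1)\log_2 p \leq 2p/c + \log_2 p$ points (using $\log_2 p \leq q/c$), which is within the budget $p - q + 1 + 2p/c$ provided $\log_2 p \leq p - q + 1$. In the second alternative, $\F$ contains a pairwise disjoint subfamily $S$ of size $p'$, and $\F\setminus S$ satisfies the $(p - p', q - 1)$-property; piercing $S$ by $p'$ points individually and applying the induction hypothesis to $\F \setminus S$ yields
\[
\tau(\F) \;\leq\; p' + (p - p') - (q - 1) + 1 + 2(p - p')/c \;=\; (p - q + 1 + 2p/c) + (1 - 2p'/c),
\]
which is at most $p - q + 1 + 2p/c$ as soon as $p' \geq c/2$. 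My choice $p' = \lceil 2p/q \rceil$ does satisfy $p' \geq c/2$ in the relevant regime, since $q \leq \sqrt{2p}$ together with $c \leq q/\log_2 p$ yields $cq \leq q^2/\log_2 p \leq 2p$, i.e., $c/2 \leq p/q \leq p'$.

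The hard part will be reconciling two boundary conditions simultaneously: the Case-1 condition $\log_2 p \leq p - q + 1$, and the Case-2 requirement that the induction hypothesis actually applies, i.e., $q - 1 \geq c\log_2(p - p')$ (which can fail at the tight boundary $q = \lceil c \log_2 p \rceil$ if $p'$ is too small). I expect both to hold in the regime $q \leq \sqrt{2p}$ via routine arithmetic using $c \leq q/\log_2 p$ and the lower bound on $p'$. For the complementary range $q > \sqrt{2p}$, Theorem~\ref{Prop:Dol2} (Dol'nikov's tight bound) immediately gives $\HD_{\mathrm{rect}}(p,q) = p - q + 1 \leq p - q + 1 + 2p/c$, finishing the argument; a handful of small-parameter exceptional cases will be dispatched by direct verification.
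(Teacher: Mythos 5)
Your decomposition is genuinely different from the paper's. The paper runs roughly $\log_2 q$ rounds of Observation~\ref{Obs:our} \emph{all at once}, halving both coordinates each round until it reaches a $(2p/q,2)$-family, and -- crucially -- it does \emph{not} pierce the bad sets $S_i$ individually: it observes that the union $S=\cup S_i$ has total size $p-1$, so as a subfamily of $\F$ it still contains an intersecting $(q-1)$-tuple and hence can be pierced by $(p-1)-(q-1)+1=p-q+1$ points. Your scheme instead takes $q'=2$ one step at a time, pierces the pairwise-disjoint $S$ by $|S|=p'$ separate points, and charges the overshoot to the savings $2p'/c$ coming from the smaller $p$ fed to the inductive hypothesis.

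There is a genuine gap in Case~2: the inductive hypothesis does not apply near the boundary $q\approx c\log_2 p$, and the counterexamples are not ``small-parameter.'' You need $q-1\ge c\log_2(p-p')$, i.e.\ $c\log_2\bigl(p/(p-p')\bigr)\ge 1$, but with $p'=\lceil 2p/q\rceil$ one has $p/(p-p')\le q/(q-2)$, so the requirement becomes $q\le 2+2/(2^{1/c}-1)\approx 2+2.9c$. For $c=3.5$ (the very value used to derive Theorem~\ref{Cor:Rect}) this confines $q$ to roughly $q\le 12$, while the lemma must cover $q$ all the way up to $\sqrt{2p}$. Concretely, take $p=1000$, $c=3.5$, $q=35$ (the smallest admissible $q$, since $c\log_2 1000\approx 34.88$, and well inside $q\le\sqrt{2p}\approx 44.7$): then $p'=\lceil 2000/35\rceil=58$, $p-p'=942$, and $c\log_2 942\approx 34.58>34=q-1$, so the pair $(942,34)$ is outside the lemma's range and the induction cannot close. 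Enlarging $p'$ so that $q-1\ge c\log_2(p-p')$ holds would require $p'\gtrsim(1-2^{-1/c})p$, a constant fraction of $p$, and then Case~1 already blows the budget ($p'\log_2 p'\approx 0.18\,p\log_2 p$ exceeds $p-q+1+2p/c$ for $p$ of moderate size). This tension -- the IH needs $p'$ large, the budget needs $p'$ small -- is exactly what the paper's all-at-once halving and collective piercing of $\cup S_i$ are designed to avoid, and I don't see how the one-step-with-$q'=2$ recursion can be repaired without essentially reverting to that structure.
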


\begin{proof}
First, assume that both $p$ and $q$ are powers of 2. We perform an inductive process with $\ell = (\log_2 q)-1$ steps, where we set $\F_0=\F$ and $(p_0,q_0)=(p,q)$, and in each step $i$, we apply Observation~\ref{Obs:our} to a family $\F_{i-1}$ that satisfies the $(p_{i-1},q_{i-1})$-property, with $(p',q')=(\frac{p_{i-1}}{2},\frac{q_{i-1}}{2})$ which we denote by $(p_i,q_i)$.

Consider Step $i$. By Observation~\ref{Obs:our}, either $\F_{i-1}$ satisfies the $(p_i,q_i)=(\frac{p_{i-1}}{2},\frac{q_{i-1}}{2})$-property, or there exists a `bad' set $S_i$ of size $\frac{p_{i-1}}{2}$ without an intersecting $\frac{q_{i-1}}{2}$-tuple, and the family $\F_{i-1} \setminus S_i$ satisfies the $(\frac{p_{i-1}}{2},\frac{q_{i-1}}{2}+1)$-property, and in particular, the $(\frac{p_{i-1}}{2},\frac{q_{i-1}}{2})$-property. In either case, we are reduced to a family $\F_i$ (either $\F_{i-1}$ or $\F_{i-1} \setminus S_i$) that satisfies the $(p_i,q_i)$-property, to which we apply Step~$i+1$.

At the end of Step $\ell$ we obtain a family $\F_{\ell}$ that satisfies the $(2p/q,2)$-property. (Note that the ratio between the left term and the right term remains constant along the way.) By the $(p,2)$-theorem, $\F_{\ell}$ can be pierced by $\frac{2p}{q}\log_2 \left(\frac{2p}{q} \right)$ points. As $q \geq \max(c \log_2 p,2)$, this implies that $\F_{\ell}$ can be pierced by
\[
\frac{2p}{q}\log_2 \left(\frac{2p}{q} \right) \leq \frac{2p}{q} \log_2 p \leq \frac{2p}{c}
\]
points.

In order to pierce $\F$, we also have to pierce the `bad' sets $S_i$. In the worst case, in each step we have a bad set, and so we have to pierce $S=\cup_{i=1}^{\ell} S_i$. The size of $S$ is $|S|=\frac{p}{2}+\frac{p}{4}+\ldots+2+1=p-1$. Since any family that satisfies the $(p,q)$-property also satisfies the $(p-k,q-k)$-property for any $k$, the family $S$ contains an intersecting $(q-1)$-tuple, which of course can be pierced by a single point. Hence, $S$ can be pierced by $(p-1)-(q-1)+1=p-q+1$ points. Therefore, in total $\F$ can be pierced by $p-q+1+2p/c$ points, as asserted.

Now, we have to deal with the case where $p,q$ are not necessarily powers of 2, and thus, in some of the steps either $p_{i-1}$ or $q_{i-1}$ or both are not divisible by 2. It is clear from the proof presented above that if we can define $(p_i,q_i)$ in such a way that in both cases (i.e., whether there is a `bad' set or not), we have $\frac{p_i}{q_i} \leq \frac{p_{i-1}}{q_{i-1}}$, and also the total size of the bad sets (i.e., $|S|$) is at most $p$, the assertion can be deduced as above (as the ratio between the left term and the right term only decreases).
%(Note that in such a case, the $(p,2)$-theorem is applied for some integer $\bar{p}$ that satisfies $\frac{\bar{p}}{2} \leq \frac{p}{q}$, and thus, $\bar{p} \leq \lfloor 2p/q \rfloor$.)
We show that this can be achieved by a proper choice of $(p_i,q_i)$ and a slight modification of the steps described above. Let
\[
(p',q') = \left( \lfloor \frac{p_{i-1}}{2} \rfloor, \lceil \frac{q_{i-1}}{2} \rceil \right).
\]
If $\F_{i-1}$ satisfies the $(p',q')$-property, we define $\F_i=\F_{i-1}$ and $(p_i,q_i)=(p',q')$. Otherwise, there exists a `bad' set $S_i$ of size $p'$ that does not contain an intersecting $q'$-tuple, and the family $\F_{i-1} \setminus S_i$ satisfies the
\[
(p_{i-1}-p',q_{i-1}-q'+1) = \left( \lceil \frac{p_{i-1}}{2} \rceil, \lfloor \frac{q_{i-1}}{2} \rfloor +1 \right)
\]
property. In this case, we define $\F_i=\F_{i-1} \setminus S_i$ and $(p_i,q_i)= (p_{i-1}-p',q_{i-1}-q'+1)$.

It is easy to check that in both cases we have $\frac{p_i}{q_i} \leq \frac{p_{i-1}}{q_{i-1}}$, and that $|S| \leq p-1$ holds also with respect to the modified definition of the $S_i$'s. Hence, the proof indeed can be completed, as above.
\end{proof}

Our second lemma is a simple upper bound on the piercing number of a family that satisfies the $(p,2)$-property. We shall use it to show that
if $\nu(\F)$ is `small', then we can save `something' when piercing large subsets of $\F$.
\begin{lemma}\label{Lem:Cute}
Any family $\G$ of $m$ sets that satisfies the $(p,2)$-property can be pierced by $\left \lfloor \frac{m+p-1}{2} \right \rfloor$ points.
\end{lemma}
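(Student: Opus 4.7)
The plan is to work with the intersection graph of $\G$ (vertex set $\G$, with an edge between $S, S' \in \G$ whenever $S \cap S' \ne \emptyset$), use a maximal matching there, and then exploit the Helly-number-$2$ assumption (implicit from context, since $\HD_{\F}(2,2)=1$) to pierce each matched pair by a single point.

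First I would pick an arbitrary maximal matching $M$ in the intersection graph of $\G$, and set $k = |M|$. By maximality, every unmatched vertex is non-adjacent to every other unmatched vertex; equivalently, the $m - 2k$ unmatched sets of $\G$ are pairwise disjoint. Since $\G$ has the $(p,2)$-property, it does not contain $p$ pairwise disjoint sets, so $m - 2k \le p - 1$, which gives
\[
k \ge \left\lceil \frac{m - p + 1}{2} \right\rceil.
\]

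Next I would produce a transversal of the claimed size: for each of the $k$ edges of $M$, the two corresponding sets intersect, so by the Helly-number-$2$ property they can be pierced by a single common point; for each of the remaining $m - 2k$ sets, I add one arbitrary point from that set. The total number of piercing points is
\[
k + (m - 2k) \;=\; m - k \;\le\; m - \left\lceil \frac{m - p + 1}{2} \right\rceil \;=\; \left\lfloor \frac{m + p - 1}{2} \right\rfloor,
\]
which is the desired bound. There is no real obstacle here: once the right object (a maximal matching in the intersection graph) is identified, the argument is essentially a counting check combined with the standard matching/independence inequality; the only thing worth verifying carefully is the floor/ceiling bookkeeping in the final line.
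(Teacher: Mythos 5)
Your proof is correct and is essentially the same as the paper's: the paper's ``greedy recursive process'' (repeatedly pull out an intersecting pair until the remainder is pairwise disjoint) constructs exactly a maximal matching in the intersection graph, and your floor/ceiling arithmetic matches the paper's. One small point worth flagging: you invoke the Helly-number-$2$ hypothesis to pierce each matched pair by a single point, but this is superfluous --- two sets that intersect share a common point by definition, which is all you need for a pair; this is why the lemma is stated for an \emph{arbitrary} family $\G$ with no Helly assumption, and your proof in fact works in that full generality once the unnecessary appeal to $\HD_{\F}(2,2)=1$ is dropped.
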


\begin{proof}
We perform the following simple recursive process. If $\G$ contains a pair of intersecting sets, pierce them by a single point and remove both of them from $\G$. Continue in this fashion until all remaining sets are pairwise disjoint. Then pierce each remaining set by a separate point.

As $\G$ satisfies the $(p,2)$-property, the number of sets that remain in the last step is at most $p-1$ if $m-(p-1)$ is even and at most $p-2$ otherwise. In the former case, the resulting piercing set is of size at most $\frac{m-(p-1)}{2}+(p-1)=\frac{m+p-1}{2}$. In the latter case, the piercing set is of size at most $\frac{m-(p-2)}{2}+(p-2)=\frac{m+p-2}{2}$. Hence, in both cases the piercing set is of size at most $\lfloor \frac{m+p-1}{2} \rfloor$, as asserted.
\end{proof}

\begin{remark}
The assertion of Lemma~\ref{Lem:Cute} is tight, as for a family $\G$ composed of $m-p+2$ lines in a general position in the plane and $p-2$ pairwise-disjoint segments that do not intersect any of the lines, we have $|\G|=m$, $\G$ satisfies the $(p,2)$-property, and $\G$ clearly cannot be pierced by less than $\left \lfloor \frac{m+p-1}{2} \right \rfloor$ points.
\end{remark}

\begin{corollary}
Let $\F$ be a family of sets in $\Re^d$, and put $\lambda=\nu(\F)$. Then any subset $S \subset \F$ can be pierced by at most $\left \lfloor \frac{|S|+\lambda}{2} \right \rfloor $ points.
\end{corollary}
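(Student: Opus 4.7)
The plan is to apply \lemref{Cute} directly, using only the elementary fact that the packing number is monotone under taking subfamilies. First I would observe that for any $S \subset \F$, a pairwise disjoint subfamily of $S$ is in particular a pairwise disjoint subfamily of $\F$, hence $\nu(S) \leq \nu(\F) = \lambda$. Equivalently, $S$ cannot contain $\lambda+1$ pairwise disjoint members, so $S$ satisfies the $(\lambda+1,2)$-property.

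Then I would invoke \lemref{Cute} with $\G = S$, $m = |S|$, and $p = \lambda+1$, obtaining that $S$ can be pierced by at most
\[
\left\lfloor \frac{|S| + (\lambda+1) - 1}{2} \right\rfloor = \left\lfloor \frac{|S| + \lambda}{2} \right\rfloor
\]
points, which is exactly the claimed bound. There is no real obstacle here: the corollary is a one-line substitution once one notes the monotonicity of $\nu$ under taking subfamilies, and the statement is included as a convenient reformulation of \lemref{Cute} in terms of the packing number $\lambda$ of the ambient family $\F$, which is the form that will be used in the subsequent arguments.
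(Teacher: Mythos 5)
Your proof is correct and matches the paper's argument exactly: both note that $S$ (via $\F$) satisfies the $(\lambda+1,2)$-property and then substitute $m=|S|$, $p=\lambda+1$ into Lemma~\ref{Lem:Cute}. You are only slightly more explicit about the monotonicity of $\nu$ under taking subfamilies, which the paper leaves implicit.
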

The corollary follows from the lemma immediately, as any such family $\F$ satisfies the $(\lambda+1,2)$-property.

\subsection{Proof of Theorem~\ref{Cor:Rect}}
\label{sec:sub:proof}

Now we are ready to present the proof of our main theorem, in the specific case of axis-parallel rectangles in the plane. Let us recall its statement.

\medskip \noindent \textbf{Theorem~\ref{Cor:Rect}.} Let $\F$ be a family of axis-parallel rectangles in the plane. If $\F$ satisfies the $(p,q)$-property, for $p \geq q \geq 2$ such that $q \geq 7 \log_2 p$, then $\F$ can be pierced by $p-q+1$ points.

\begin{remark}
We note that the parameters in the proof (e.g., the values of $(p',q')$ in the inductive step) were chosen in a sub-optimal way, that is however sufficient to yield the assertion with the constant $7$. (The straightforward choice $(p',q')=(0.5p,0.5q)$ is not sufficient for that). The constant can be further optimized by a more careful choice of the parameters; however, it seems that in order to reduce it below 6, a significant change in the proof is needed.
\end{remark}

\begin{proof}[Proof of Theorem~\ref{Cor:Rect}] The proof is by induction.

\medskip \noindent \textbf{Induction basis.} One can assume that $q \geq 37$, as for any smaller value of $q$, there are no $p$'s such that $7 \log_2 p \leq q \leq p$. For $q=37$, the theorem is only relevant for $(p,q)=(37,37)$, and in this case we clearly have $\HD_{\F}(p,q)=1=p-q+1$. In fact, this is a sufficient basis, since, in the inductive step, the value of $q$ is reduced by 1 every time. However, in the proof we would like to assume that $p,q$ are `sufficiently large'; hence, we use Theorem~\ref{Prop:Dol2} as the induction basis in order to cover a larger range of small $(p,q)$ values.

We observe that for $q \leq 70$, all relevant $(p,q)$ pairs (i.e., all pairs for which $7 \log_2 p \leq q \leq p$) satisfy $p \leq {{q+1}\choose{2}}$. Hence, in this range we have $\HD_{\F}(p,q)=p-q+1$ by Theorem~\ref{Prop:Dol2}. Therefore, we may assume that $q >70$; we also may assume $q<\sqrt{2p}$ (as otherwise, the assertion follows from Theorem~\ref{Prop:Dol2}), and thus, $p>2450$ and so (using again the assumption $q<\sqrt{2p}$), also $p>35q$.

\medskip \noindent \textbf{Inductive step.} Put $\lambda=\nu(\F)$. By Observation~\ref{Obs:Dol}, we have $\HD_{\F}(p,q) \leq \HD_{\F}(p-\lambda,q-1)+\lambda-1$. We want $\lambda$ to be sufficiently large, such that if $(p,q)$ lies in the range covered by the theorem (i.e., if $q \geq 7 \log_2 p$), then $(p-\lambda,q-1)$ also lies in the range covered by the theorem (i.e., $q-1 \geq 7\log_2 (p-\lambda)$). Note that the condition $q \geq 7\log_2 p$ is equivalent to $2^{q/7} \geq p$, which implies $2^{(q-1)/7} = \frac{2^{q/7}}{2^{1/7}} \geq 0.9p$. Hence, if $\lambda \geq 0.1p$ then $q-1 \geq 7\log_2(p-\lambda)$, and so we can deduce from the induction hypothesis that
\[
\HD_{\F}(p,q) \leq \HD_{\F}(p-\lambda,q-1)+\lambda-1 \leq (p-\lambda)-(q-1)+1+(\lambda-1)=p-q+1,
\]
as asserted. Therefore, it is sufficient to prove that $\HD_{\F}(p,q) \leq p-q+1$ holds when $\lambda<0.1p$.

\medskip Under this assumption on $\lambda$, we apply Observation~\ref{Obs:our} to $\F$, with $(p',q')=\left(\lfloor 0.62p \rfloor, 0.5q \right)$. We have to consider two cases:

\medskip \noindent \textbf{Case 1: $\F$ satisfies the $(p',q')$-property.} By the assumption on $(p,q)$, we have $q \geq 7\log_2 p$, and thus, $0.5q \geq 3.5\log_2 p \geq 3.5 \log_2 \lfloor 0.62p \rfloor$. Hence, by Lemma~\ref{Lem:Weak-(p,q)},
\[
\HD_{\F}\left(\lfloor 0.62p \rfloor, 0.5q \right) \leq 0.62p-0.5q+1+\frac{2}{3.5}\cdot 0.62p<0.975p-0.5q+1 \leq p-q+1,
\]
where the last inequality holds because we may assume $q \leq 0.05p$, since $p>35q$ as was written above. Thus, $\F$ can be pierced by at most $p-q+1$ points, as asserted.

\medskip \noindent \textbf{Case 2: $\F$ does not satisfy the $(p',q')$-property.} In this case, there exists a `bad' subfamily $S$ of size $p'=\lfloor 0.62p \rfloor$ that does not contain an intersecting $0.5q$-tuple, and the family $\F \setminus S$ satisfies the $(\lceil 0.38p \rceil,0.5q)$-property.

To pierce $\F \setminus S$, we use Lemma~\ref{Lem:Weak-(p,q)}. Like above, we have $0.5q \geq 3.5\log_2 \lceil 0.38p \rceil$, whence by Lemma~\ref{Lem:Weak-(p,q)},
\[
\HD_{\F}\left(\lceil 0.38p \rceil, 0.5q \right) \leq 0.39p-0.5q+1+\frac{2}{3.5}\cdot 0.39p < 0.613p-0.5q+1,
\]
where the first inequality holds since we may assume $p \geq 100$ (as was written above), and thus, $\lceil 0.38p \rceil \leq 0.39p$.

To pierce the `bad' subfamily $S$, we use Lemma~\ref{Lem:Cute}, which implies that $S$ can be pierced by
\[
\lfloor \frac{1}{2}(|S|+\lambda) \rfloor \leq \frac{1}{2}(0.62p+0.1p)= 0.36p
\]
points. Therefore, in total $\F$ can be pierced by $(0.613p-0.5q+1)+0.36p<0.975p-0.5q+1$ points. Since we may assume $q \geq 0.05p$ (like above), this implies that $\F$ can be pierced by $p-q+1$ points. This completes the proof.
\end{proof}

\section{From (2,2)-theorems to (p,2)-theorems}
\label{sec:(p,2)}

As was mentioned in the introduction, in general, the existence of a $(2,2)$-theorem (and even Helly number 2) does not imply the existence of a $(p,2)$-theorem. An example mentioned by Fon der Flaass and Kostochka~\cite{FdFK93} (in a slightly different context) is presented in Appendix~\ref{app:example}.

\medskip \noindent In this section we prove Theorem~\ref{Thm:New-(p,2)1} which asserts that for compact convex families with Helly number~2, a $(2,2)$-theorem does imply a $(p,2)$-theorem, and consequently, a tight $(p,q)$-theorem for a large range of $q$'s. Due to space constraints, the proof of our other new $(p,2)$-theorem (i.e., Theorem~\ref{Thm:New-(p,2)2}) is presented in Appendix~\ref{app:(p,2)}.

%\subsection{A (p,2)-theorem for compact convex sets in $\Re^d$ with Helly number 2}
%\label{sec:sub:geom-(p,2)}

Let us recall the assertion of the theorem:

\medskip \noindent \textbf{Theorem~\ref{Thm:New-(p,2)1}.} For any family $\F$ of compact convex sets in $\Re^d$ that has Helly number 2, we have $\HD_{\F}(p,2) \leq \frac{p^{2d-1}}{2^{d-1}}$. Consequently, we have $\HD_{\F}(p,q)=p-q+1$ for all $q>c p^{1-\frac{1}{2d-1}}$, where $c=c(d)$.

\medskip \noindent The `consequently' part follows immediately from the $(p,2)$-theorem via Theorem~\ref{Thm:Main}. (Formally, Theorem~\ref{Thm:Main} is stated only for growth rate of  $\HD_{\F}(p,2)=O(p^5)$, but it is apparent from the proof that the argument can be extended to $\HD_{\F}(p,2)=O(p^m)$ for any $m \in \mathbb{N}$, at the expense of the constant $c$ becoming dependent on $m$.) Hence, we only have to prove the $(p,2)$-theorem.

\medskip Let us present the proof idea first. The proof goes by induction on $d$. Given a family $\F$ of sets in $\Re^d$ that satisfies the assumptions of the theorem and has the $(p,2)$-property, we take $\mathcal{S}$ to be a maximum (with respect to size) pairwise-disjoint subfamily of $\F$, and consider the intersections of other sets of $\F$ with the elements of $\mathcal{S}$. We observe that by the maximality of $\mathcal{S}$, each set $A \in \F \setminus \mathcal{S}$ intersects at least one element of $\mathcal{S}$, and thus, we may partition $\F$ into three subfamilies: $\mathcal{S}$ itself, the family $\mathcal{U}$ of sets in $\F \setminus \mathcal{S}$ that intersect only one element of $\mathcal{S}$, and the family $\mathcal{M} \subset \F \setminus \mathcal{S}$ of sets that intersect at least two elements of $\mathcal{S}$.

We show (using the maximality of $\mathcal{S}$ and the $(2,2)$-theorem on $\F$) that $\mathcal{U} \cup \mathcal{S}$ can be pierced by $p-1$ points. As for $\mathcal{M}$, we represent it as a union of families: $\mathcal{M}= \cup_{C,C' \in \mathcal{S}} \mathcal{X}_{C,C'}$, where each $\mathcal{X}_{C,C'}$ consists of the elements of $F \setminus \mathcal{S}$ that intersect both $C$ and $C'$. We use a geometric argument to show that each $\mathcal{X}_{C,C'}$ corresponds to $\mathcal{Y}_{C,C'} \subset \Re^{d-1}$ that has Helly number 2 and satisfies the $(p,2)$-property. This allows us to bound the piercing number of $\mathcal{Y}_{C,C'}$ by the induction hypothesis, and consequently, to bound the piercing number of $\mathcal{X}_{C,C'}$. Adding up the piercing numbers of all $\mathcal{X}_{C,C'}$'s and the piercing number of $\mathcal{U} \cup \mathcal{S}$ completes the inductive step.

%We note that the inspiration for the proof comes from a somewhat similar argument used by Wagon~\cite{Wag80} in the context of colorability and $\chi$-boundness.

\begin{proof}[Proof of Theorem~\ref{Thm:New-(p,2)1}] By induction on $d$.

\medskip \noindent \textbf{Induction basis.} For any family $\F$ of compact convex sets in $\Re^1$, by the Hadwiger-Debrunner theorem~\cite{HD57} we have $\HD_{\F}(p,2)=p-2+1<p=p^{2 \cdot 1-1}/2^{1-1}$, and so the assertion holds.

\medskip \noindent \textbf{Inductive step.} Let $\F$ be a family of sets in $\Re^d$ that satisfies the assumptions of the theorem and has the $(p,2)$-property. Let $\mathcal{S}$ be a maximum (with respect to size) pairwise-disjoint subfamily of $\F$. W.l.o.g., we may assume $|\mathcal{S}|=p-1$.

By the maximality of $\mathcal{S}$, each set $A \in \F \setminus \mathcal{S}$ intersects at least one element of $\mathcal{S}$. Moreover, any two sets $A,B \in \F$ that intersect the same $C \in \mathcal{S}$ and do not intersect any other element of $\mathcal{S}$, are intersecting, as otherwise, the subfamily $\mathcal{S} \cup \{A,B\} \setminus \{C\}$ would be a pairwise-disjoint subfamily of $\F$ that is larger than $\mathcal{S}$, a contradiction. Hence, for each $C_0 \in \mathcal{S}$, the subfamily
\[
\mathcal{X}_{C_0} = \{A \in \F: \{C \in \mathcal{S}: A \cap C \neq \emptyset\}=\{C_0\}\} \cup \{C_0\}
\]
satisfies the $(2,2)$-property, and thus, can be pierced by a single point by the assumption on $\F$. Therefore, denoting $\mathcal{U}= \{A \in \F: |\{C \in \mathcal{S}: A \cap C \neq \emptyset\}|=1\}$, all sets in $\mathcal{U} \cup \mathcal{S}$ can be pierced by at most $p-1$ points.

Let $\mathcal{M} \subset \F$ be the family of all sets in $\F$ that intersect at least two elements of $\mathcal{S}$. For each $C,C' \in \mathcal{S}$, let
\[
\mathcal{X}_{C,C'} = \{A \in \F \setminus \mathcal{S}: A \cap C \neq \emptyset \wedge A \cap C' \neq \emptyset\}.
\]
(Note that the elements of $\mathcal{X}_{C,C'}$ may intersect other elements of $\mathcal{S}$). Let $H \subset \Re^d$ be a hyperplane that strictly separates $C$ from $C'$, and put $\mathcal{Y}_{C,C'} = \{A \cap H: A \in \mathcal{X}_{C,C'}\}$.
\begin{claim}\label{Cl:Aux-g}
$\mathcal{Y}_{C,C'} \subset H \approx \Re^{d-1}$ admits $\HD_{\mathcal{Y}_{C,C'}}(2,2)=1$ and satisfies the $(p,2)$-property.
\end{claim}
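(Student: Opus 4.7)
The plan is to reduce both parts of the claim to the Helly-number-2 hypothesis on $\F$, using $C$ and $C'$ as ``anchors'' on opposite sides of $H$ together with a short convexity argument. As a preliminary I would record that each member of $\mathcal{Y}_{C,C'}$ is a non-empty compact convex subset of $H$: each $A\in\mathcal{X}_{C,C'}$ is compact convex and meets both $C$ and $C'$, which lie strictly on opposite sides of $H$, so convexity forces $A$ to cross $H$.

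The key step I plan to isolate as a lemma is the following ``lifting'' statement: for any subfamily $\{A_i\}_{i\in I}\subseteq\mathcal{X}_{C,C'}$ whose members pairwise intersect in $\Re^d$, the intersection $\bigcap_{i\in I}A_i$ meets $H$. The idea is to apply the Helly-2 hypothesis of $\F$ to $\{A_i\}_{i\in I}\cup\{C\}$, which is pairwise intersecting because every $A_i$ meets $C$ by definition of $\mathcal{X}_{C,C'}$, to obtain a point $x\in C\cap\bigcap_i A_i$; then apply it to $\{A_i\}_{i\in I}\cup\{C'\}$ to obtain $y\in C'\cap\bigcap_i A_i$; and finally use convexity of $\bigcap_i A_i$ together with the strict separation of $C$ and $C'$ by $H$ to conclude that the segment $[x,y]\subseteq\bigcap_i A_i$ crosses $H$, producing the required point.

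Granting the lifting lemma, both parts of the claim follow easily. For the $(p,2)$-property, any $p$ elements of $\mathcal{Y}_{C,C'}$ lift to $p$ sets $A_1,\dots,A_p\in\mathcal{X}_{C,C'}\subseteq\F$; the $(p,2)$-property of $\F$ supplies indices $i\ne j$ with $A_i\cap A_j\ne\emptyset$, and the lifting lemma applied to $I=\{i,j\}$ transfers this intersection down to $(A_i\cap H)\cap(A_j\cap H)$. For $\HD_{\mathcal{Y}_{C,C'}}(2,2)=1$, a pairwise intersecting subfamily $\{A_i\cap H\}_{i\in I}$ of $\mathcal{Y}_{C,C'}$ has pairwise intersecting preimages $\{A_i\}_{i\in I}$ in $\F$, so the lifting lemma applied to the entire index set $I$ yields a common point in $\bigcap_{i\in I}(A_i\cap H)$.

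I do not anticipate a genuine obstacle here; the only mild subtlety is making sure the segment argument is clean, which is precisely why $H$ was chosen to \emph{strictly} separate $C$ from $C'$, guaranteeing that $x$ and $y$ sit on opposite sides of $H$ and that $[x,y]$ truly crosses it. Note also that the argument never uses convexity or compactness of the members of $\mathcal{S}$ beyond the existence of such a separating hyperplane, so the same scheme would go through in the slightly more general setting envisaged by Theorem~\ref{Thm:Main}.
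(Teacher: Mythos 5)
Your proof is correct and follows essentially the same route as the paper: in both cases one adjoins $C$ (resp.\ $C'$) to a pairwise-intersecting subfamily of $\mathcal{X}_{C,C'}$, invokes $\HD_{\F}(2,2)=1$ to land a common point in $C$ (resp.\ $C'$), and then uses convexity of the intersection together with the strict separation by $H$ to produce a point on $H$. The only cosmetic difference is that you package the two directions into a single ``lifting lemma'' applied once with $|I|=2$ and once with general $I$, whereas the paper handles the pairwise case and the full-intersection case as two consecutive observations.
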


\begin{proof}
To prove the claim, we observe that $A \cap H, A' \cap H \in \mathcal{Y}_{C,C'}$ intersect if and only if $A$ and $A'$ intersect. Indeed, assume $A \cap A' \neq \emptyset$. The family $\{A,A',C\}$ satisfies the $(2,2)$-property, and hence, can be pierced by a single point by the assumption on $\F$. Thus, $A \cap A'$ contains a point of $C$. For the same reason, $A \cap A'$ contains a point of $C'$. Therefore, $A \cap A'$ contains points on the two sides of the hyperplane $H$. However, $A \cap A'$ is convex, and so, $(A \cap A') \cap H \neq \emptyset$, which means that $(A \cap H)$ and $(A' \cap H)$ intersect. The other direction is obvious.

It is now clear that as $\mathcal{X}_{C,C'} \subset \F$ satisfies the $(p,2)$-property, $\mathcal{Y}_{C,C'}$ satisfies the $(p,2)$-property as well. Moreover, let $T = \{A_1 \cap H, A_2 \cap H, A_3 \cap H, \ldots \} \subset \mathcal{Y}_{C,C'}$ be pairwise-intersecting. The corresponding family $\tilde{T} = \{C,A_1,A_2,A_3,\ldots\}$ is pairwise-intersecting, and thus, can be pierced by a single point by the assumption on $\F$. Thus, $(A_1 \cap A_2 \cap A_3 \cap \ldots) \cap C \neq \emptyset$. For the same reason, $(A_1 \cap A_2 \cap A_3 \cap \ldots) \cap C' \neq \emptyset$. Since $A_1 \cap A_2 \cap A_3 \cap \ldots$ is convex, this implies that $(A_1 \cap A_2 \cap A_3 \cap \ldots) \cap H \neq \emptyset$, or equivalently, that the family $T$ can be pierced by a single point. Therefore, $\mathcal{Y}_{C,C'}$ satisfies $\HD_{\mathcal{Y}_{C,C'}}(2,2)=1$, as asserted.
\end{proof}

\noindent Claim~\ref{Cl:Aux-g} allows us to apply the induction hypothesis to $\mathcal{Y}_{C,C'}$, to deduce that it can be pierced by less than $p^{2d-3}/2^{d-1}$ points. Since $\mathcal{S}$ contains only ${{p-1}\choose{2}}$ pairs $(C,C')$, and since any set in $\mathcal{M}$ belongs to at least one of the $\mathcal{X}_{C,C'}$, this implies that $\mathcal{M}$ can be pierced by less than ${{p-1}\choose{2}}  \cdot p^{2d-3}/2^{d-2}$ points. As $\mathcal{U} \cup \mathcal{S}$ can be pierced by $p-1$ points as shown above, $\F$ can be pierced by less than
\[
{{p-1}\choose{2}}  \cdot \frac{p^{2d-3}}{2^{d-2}} + (p-1) < \frac{p^{2d-1}}{2^{d-1}}
\]
points. This completes the proof.
\end{proof}

\bibliographystyle{plain}
\bibliography{references1}

\begin{thebibliography}{10}

\bibitem{ABFK}
N.~Alon, I.~B{\'a}r{\'a}ny, Z.~F{\"u}redi, and D.J. Kleitman.
\newblock Point selections and weak $\epsilon$-nets for convex hulls.
\newblock {\em Combinatorics, Probability {\&} Computing}, 1:189--200, 1992.

\bibitem{AKMM}
N.~Alon, G.~Kalai, R.~Meshulam, and J.~Matou{\v s}ek.
\newblock Transversal numbers for hypergraphs arising in geometry.
\newblock {\em Adv. Appl. Math}, 29:79--101, 2001.

\bibitem{AK}
N.~Alon and D.J. Kleitman.
\newblock Piercing convex sets and the {H}adwiger-{D}ebrunner (p,q)-problem.
\newblock {\em Advances in Mathematics}, 96(1):103 -- 112, 1992.

\bibitem{AK97}
N.~Alon and D.J. Kleitman.
\newblock A purely combinatorial proof of the {H}adwiger-{D}ebrunner (p,q)
  conjecture.
\newblock {\em Electr. J. Comb.}, 4(2), 1997.

\bibitem{BFMOP14}
I.~B{\'{a}}r{\'{a}}ny, F.~Fodor, L.~Montejano, D.~Oliveros, and A.~P{\'{o}}r.
\newblock Colourful and fractional (p, q)-theorems.
\newblock {\em Discrete {\&} Computational Geometry}, 51(3):628--642, 2014.

\bibitem{Bes47}
A.~S. Besicovitch.
\newblock On {C}rum's problem.
\newblock {\em J. London Mat. Soc.}, 22:285--287, 1947.

\bibitem{CH12}
T.M. Chan and S.~Har{-}Peled.
\newblock Approximation algorithms for maximum independent set of pseudo-disks.
\newblock {\em Discrete {\&} Computational Geometry}, 48(2):373--392, 2012.

\bibitem{CSZ17+}
M.~Chudnovsky, S.~Spirkl, and S.~Zerbib.
\newblock Piercing axis-parallel boxes, preprint, 2017.

\bibitem{Dol72}
V.~L. Dol'nikov.
\newblock A certain coloring problem.
\newblock {\em Sibirsk. Mat. \v Z.}, 13:1272--1283, 1420, 1972.

\bibitem{DJ11}
A.~Dumitrescu and M.~Jiang.
\newblock Piercing translates and homothets of a convex body.
\newblock {\em Algorithmica}, 61(1):94--115, 2011.

\bibitem{ECK03}
J.~Eckhoff.
\newblock A survey of the {H}adwiger-{D}ebrunner (p,q)-problem.
\newblock In B.~Aronov, S.~Basu, J.~Pach, and M.~Sharir, editors, {\em Discrete
  and Computational Geometry}, volume~25 of {\em Algorithms and Combinatorics},
  pages 347--377. Springer Berlin Heidelberg, 2003.

\bibitem{ES74}
P.~Erd{\H{o}}s and J.~Spencer.
\newblock {\em Probabilistic Methods in Combinatorics}.
\newblock Akad{\'{e}}miai Kiad{\'{o}}, 1974.

\bibitem{FdFK93}
D.~Fon{-}Der{-}Flaass and A.~V. Kostochka.
\newblock Covering boxes by points.
\newblock {\em Disc. Math.}, 120(1-3):269--275, 1993.

\bibitem{GN15}
S.~Govindarajan and G.~Nivasch.
\newblock A variant of the {H}adwiger-{D}ebrunner (p, q)-problem in the plane.
\newblock {\em Discrete {\&} Computational Geometry}, 54(3):637--646, 2015.

\bibitem{GL85}
A.~Gy{\'{a}}rf{\'{a}}s and J.~Lehel.
\newblock Covering and coloring problems for relatives of intervals.
\newblock {\em Discrete Mathematics}, 55(2):167--180, 1985.

\bibitem{HD57}
H.~Hadwiger and H.~Debrunner.
\newblock {\" U}ber eine variante zum {H}ellyschen satz.
\newblock {\em Archiv der Mathematik}, 8(4):309--313, 1957.

\bibitem{HD60}
H.~Hadwiger and H.~Debrunner.
\newblock {\em Kombinatorische {G}eometrie in der {E}bene}.
\newblock Monographies de ``L'Enseignement Math\'ematique'', No. 2. Institut de
  Math\'ematiques, Universit\'e, Gen\`eve, 1960.

\bibitem{HD64}
H.~Hadwiger and H.~Debrunner.
\newblock {\em Combinatorial geometry in the plane}.
\newblock Translated by V. Klee. With a new chapter and other additional
  material supplied by the translator. Holt, Rinehart and Winston, New York,
  1964.

\bibitem{HW}
D.~Haussler and E.~Welzl.
\newblock epsilon-nets and simplex range queries.
\newblock {\em Discrete {\&} Computational Geometry}, 2:127--151, 1987.

\bibitem{Kar91}
Gy. K{\'a}rolyi.
\newblock On point covers of parallel rectangles.
\newblock {\em Period. Math. Hungar.}, 23(2):105--107, 1991.

\bibitem{KST17}
C.~Keller, S.~Smorodinsky, and G.~Tardos.
\newblock On max-clique for intersection graphs of sets and the
  hadwiger-debrunner numbers.
\newblock In Philip~N. Klein, editor, {\em Proceedings of the Twenty-Eighth
  Annual {ACM-SIAM} Symposium on Discrete Algorithms, {SODA} 2017}, pages
  2254--2263, 2017.

\bibitem{KNPS06}
S.~J. Kim, K.~Nakprasit, M.~J. Pelsmajer, and J.~Skokan.
\newblock Transversal numbers of translates of a convex body.
\newblock {\em Discrete Mathematics}, 306(18):2166 -- 2173, 2006.

\bibitem{KGT01}
D.J. Kleitman, A.~Gy{\'{a}}rf{\'{a}}s, and G.~T{\'{o}}th.
\newblock Convex sets in the plane with three of every four meeting.
\newblock {\em Combinatorica}, 21(2):221--232, 2001.

\bibitem{PT94}
D.~Larman, J.~Matou{\v s}ek, J.~Pach, and J.~T{\"{o}}r{\"{o}}csik.
\newblock A {R}amsey-type result for planar convex sets.
\newblock {\em Bulletin of London Math. Soc.}, 26:132--136, 1994.

\bibitem{MAT04}
J.~Matou{\v s}ek.
\newblock Bounded {VC}-dimension implies a fractional {H}elly theorem.
\newblock {\em Discrete {\&} Computational Geometry}, 31(2):251--255, 2004.

\bibitem{P15}
R.~Pinchasi.
\newblock A note on smaller fractional {H}elly numbers.
\newblock {\em Discrete and Computational Geometry}, 54(3):663--668, 2015.

\bibitem{Ram30}
F.~P. Ramsey.
\newblock On a problem of formal logic.
\newblock {\em Proc. London Math. Soc.}, 30:264--286, 1930.

\bibitem{Rob69}
F.~S. Roberts.
\newblock On the boxicity and cubicity of a graph.
\newblock In W.~T. Tutte, editor, {\em Recent progress in Combinatorics}, pages
  301--310. Academic Press, New York, 1969.

\bibitem{Sch76}
N.~Scheller.
\newblock (p,q)-probleme f{\"{u}}r quaderfamilien.
\newblock Master's thesis, Universit{\"{a}}t Dortmund, 1996.

\bibitem{Tie05}
H.~Tietze.
\newblock {\"{U}}ber das problem der nachbargeibiete in raum.
\newblock {\em Monatshefte Math.}, 16:211--216, 1905.

\bibitem{VC71}
V.~N. Vapnik and A.~Ya. Chervonenkis.
\newblock On the uniform convergence of relative frequencies of events to their
  probabilities.
\newblock {\em Theory of Probability and its Applications}, 16(2):264--280,
  1971.

\bibitem{Weg65}
G.~Wegner.
\newblock {\"U}ber eine kombinatorisch-geometrische {F}rage von {H}adwiger und
  {D}ebrunner.
\newblock {\em Israel J. Math.}, 3:187--198, 1965.

\end{thebibliography}

\appendix

\section{Proof of Theorem~\ref{Thm:Main}}
\label{app:main}

In this appendix we present the full proof of Theorem~\ref{Thm:Main}, which allows leveraging a $(p,2)$-theorem into a tight $(p,q)$-theorem, for families $\F$ that satisfy $\HD_{\F}(2,2)=1$. For the sake of completeness we present the proof almost in full, although most of the components appear (in a simplified form) in the case of axis-parallel rectangles presented in Section~\ref{sec:rectangles}. This appendix is organized as follows. First we outline the proof in Section~\ref{app:sub:outline}. Then we present several lemmas required for the proof in Section~\ref{app:sub:lemmas}, and the proof itself in Section~\ref{app:sub:proof}. We deduce Theorem~\ref{Cor:Box} from Theorem~\ref{Thm:Main} in Section~\ref{app:sub:box}. Finally, for sake of completeness we present the proof of Observation~\ref{Obs:Dol} in Section~\ref{app:obs-proof}.

\subsection{Outline of our method}
\label{app:sub:outline}

Let $\F$ be a family that satisfies $\HD_{\F}(2,2)=1$. In order to leverage a $(p,2)$-theorem for $\F$ into a tight $(p,q)$-theorem we would like to perform an inductive process similar to the process applied in the proof of Theorem~\ref{Prop:Dol2}. Put $\lambda=\nu(\F)$. If $\lambda$ is `sufficiently large', we apply the recursive formula $\HD_{\F}(p,q) \leq \HD_{\F}(p-\lambda,q-1)+\lambda-1$ and use the induction hypothesis to bound $\HD_{\F}(p-\lambda,q-1)$. Otherwise, we would like to use the $(p,2)$-theorem to deduce that $\F$ can be pierced by at most $p-q+1$ points.

Since we allow $q$ to be as small as roughly $\log p$, and as we want to apply the induction hypothesis to $\HD_{\F}(p-\lambda,q-1)$, $\lambda$ must be at least \emph{linear in $p$}. Thus, in the `otherwise' case we have to show directly that if $\lambda <c'p$ for a sufficiently small constant $c'$, then $\F$ can be pierced by at most $p-q+1$ points. If we merely use the fact that $\F$ satisfies the $(\lambda+1,2)$-property and apply the $(p,2)$-theorem, we only obtain that $\F$ can be pierced by $c'p f(c'p)$ points -- significantly weaker than the desired bound $p-q+1$.

Instead, we use a more complex procedure, based on Observation~\ref{Obs:our} presented above. First, we use Observation~\ref{Obs:our} to leverage the $(p,2)$-theorem by an inductive process into a `weak' $(p,q)$-theorem that guarantees piercing with $p-q+1+O(p)$ points, for all $q =\Omega(T_{100}(p))$, where $T_c(p)=\min \{q: q \geq 2c \cdot f(2p/q)\}$. We then show that if $\lambda <c'p$ for a sufficiently small absolute constant $c'$, then $\F$ can be pierced by at most $p-q+1$ points, by combining the weak $(p,q)$-theorem, another application of Observation~\ref{Obs:our}, and a lemma which exploits the size of $\lambda$.

In addition, we have to handle the induction basis: while in the proof of Theorem~\ref{Prop:Dol2}, Dol'nikov could use the case $p=q=2$ as the induction basis, our assertion applies only to significantly larger values of $q$. Hence, we will have to guarantee that for the `minimum relevant' value of $q$, for all `relevant' values of $p$ (i.e., all values of $p$ such that $q \geq T_{100}(p)$) we have $\HD_{\F}(p,q)=p-q+1$. We shall deduce this from another result of Dol'nikov presented below.

\subsection{Lemmas used in the proof}
\label{app:sub:lemmas}

The first lemma is a weak $(p,q)$-theorem, that can be obtained from a $(p,2)$-theorem using Observation~\ref{Obs:our}. While the proof of the lemma is very similar to the proof that was already described before, in the case of axis-parallel rectangles, we present it in full for sake of completeness.
\begin{lemma}\label{Lem:App-Weak-(p,q)}
Let $\F$ be a family of sets in $\Re^d$ and let $c >0$. Assume that for all $2 \leq p \in \mathbb{N}$ we have $\HD_{\F}(p,2)=p f(p)$, where $f:[2,\infty) \rightarrow [1,\infty)$ is a monotone increasing function of $p$. Let $T_c(p)=\min \{q: q \geq 2c \cdot f(2p/q)\}$. Then for any $q \geq T_c(p)$, we have
\[
\HD_{\F}(p,q) \leq p-q+1+\frac{p}{c}.
\]
\end{lemma}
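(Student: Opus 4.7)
The plan is to mirror the proof of Lemma~\ref{Lem:Weak-(p,q)} from the rectangle case, with the logarithmic $(p, 2)$-bound replaced by the abstract bound $p f(p)$; the threshold $T_c(p)$ is designed so that the final estimate collapses cleanly to $p/c$.

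Starting from $\F_0 = \F$ and $(p_0, q_0) = (p, q)$, I would iterate Observation~\ref{Obs:our} with the choice $(p', q') = (\lfloor p_{i-1}/2 \rfloor, \lceil q_{i-1}/2 \rceil)$ at step $i$. Either $\F_{i-1}$ already satisfies the $(p', q')$-property (set $\F_i := \F_{i-1}$, $(p_i, q_i) := (p', q')$), or a bad subfamily $S_i$ of size $p'$ contains no intersecting $q'$-tuple and $\F_{i-1} \setminus S_i$ satisfies the $(\lceil p_{i-1}/2 \rceil, \lfloor q_{i-1}/2 \rfloor + 1)$-property (set $(p_i, q_i)$ accordingly and $\F_i := \F_{i-1} \setminus S_i$). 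A short case analysis shows that in both branches the ratio $p_i/q_i$ does not exceed $p_{i-1}/q_{i-1}$, so it stays bounded by $p/q$ throughout the process.

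After $\ell \approx \log_2 q$ iterations the right-hand parameter $q_\ell$ drops to $2$, and the ratio bound forces $p_\ell \leq 2p/q$. By the $(p, 2)$-hypothesis and the monotonicity of $f$, the remaining family $\F_\ell$ is pierced by at most $p_\ell f(p_\ell) \leq (2p/q) f(2p/q)$ points, and the defining condition $q \geq T_c(p)$ is exactly $f(2p/q) \leq q/(2c)$, so this bound collapses to $p/c$. The union $S = \bigcup_i S_i$ of the bad sets has size at most $\lfloor p/2 \rfloor + \lfloor p/4 \rfloor + \cdots \leq p-1$; by iteratively shifting the $(p, q)$-property of $\F$ one finds an intersecting tuple of size $q - (p - \lvert S \rvert)$ inside $S$, pierces it by a single point, and pierces the remaining $p-q$ sets trivially, for $p-q+1$ piercing points in total. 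Adding the two contributions gives $\HD_\F(p, q) \leq (p-q+1) + p/c$, as claimed.

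The only delicate point, and the main place where care is needed, is the non-integer bookkeeping: the floor/ceiling choices must simultaneously preserve the ratio $p_i/q_i \leq p/q$ and keep the cumulative bad-set size bounded by $p - 1$, just as in the final paragraph of the proof of Lemma~\ref{Lem:Weak-(p,q)}. Monotonicity of $f$ is invoked only to absorb the possibility that $p_\ell$ is slightly less than $2p/q$, so that $f(p_\ell) \leq f(2p/q)$. Beyond this bookkeeping, the argument is a direct generalization of the rectangle case.
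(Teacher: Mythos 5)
Your proof is correct and matches the paper's argument essentially step for step: the iterated application of Observation~\ref{Obs:our} with the same floor/ceiling choices $(\lfloor p_{i-1}/2\rfloor,\lceil q_{i-1}/2\rceil)$, the ratio-preservation bookkeeping, and the $p-q+1$ bound for piercing the union of bad sets. The only stylistic differences are that the paper first presents the power-of-two case before handling the rounding, and explicitly invokes Claim~\ref{Cl:T(p)}(1) to pass from $q\geq T_c(p)$ to $q\geq 2c\,f(2p/q)$ (a monotonicity observation you use implicitly).
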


\begin{proof}
First, assume that both $p$ and $q$ are powers of 2. We perform an inductive process with $\ell = (\log_2 q)-1$ steps, where we set $\F_0=\F$ and $(p_0,q_0)=(p,q)$, and in each step $i$, we apply Observation~\ref{Obs:our} to a family $\F_{i-1}$ that satisfies the $(p_{i-1},q_{i-1})$-property, with $(p',q')=(\frac{p_{i-1}}{2},\frac{q_{i-1}}{2})$ which we denote by $(p_i,q_i)$.

Consider Step $i$. By Observation~\ref{Obs:our}, either $\F_{i-1}$ satisfies the $(p_i,q_i)=(\frac{p_{i-1}}{2},\frac{q_{i-1}}{2})$-property, or there exists a `bad' set $S_i$ of size $\frac{p_{i-1}}{2}$ without an intersecting $\frac{q_{i-1}}{2}$-tuple, and the family $\F_{i-1} \setminus S_i$ satisfies the $(\frac{p_{i-1}}{2},\frac{q_{i-1}}{2}+1)$-property, and in particular, the $(\frac{p_{i-1}}{2},\frac{q_{i-1}}{2})$-property. In either case, we are reduced to a family $\F_i$ (either $\F_{i-1}$ or $\F_{i-1} \setminus S_i$) that satisfies the $(p_i,q_i)$-property, to which we apply Step~$i+1$.

At the end of Step $\ell$ we obtain a family $\F_{\ell}$ that satisfies the $(2p/q,2)$-property. By the assumption of the lemma, this family can be pierced by $\frac{2p}{q} f (\frac{2p}{q})$ points. Noting that the map $q \mapsto f(2p/q)$ is decreasing and using the definition of $T_c(p)$ and the assumption $q>T_c(p)$, we obtain
\[
\frac{2p}{q} f \left(\frac{2p}{q} \right) \leq \frac{2p}{T_c(p)} f \left(\frac{2p}{T_c(p)} \right) \leq \frac{2p}{2c}=\frac{p}{c},
\]
and thus, $\F_{\ell}$ can be pierced by $p/c$ points.

In order to pierce $\F$, we also have to pierce the `bad' sets $S_i$. In the worst case, in each step we have a bad set, and so we have to pierce $S=\cup_{i=1}^{\ell} S_i$. The size of $S$ is $|S|=\frac{p}{2}+\frac{p}{4}+\ldots+2+1=p-1$. Since any family that satisfies the $(p,q)$-property also satisfies the $(p-k,q-k)$-property for any $k$, the family $S$ contains an intersecting $(q-1)$-tuple, which of course can be pierced by a single point. Hence, $S$ can be pierced by $(p-1)-(q-1)+1=p-q+1$ points. Therefore, in total $\F$ can be pierced by $p-q+1+p/c$ points, as asserted.

Now, we have to deal with the case where $p,q$ are not necessarily powers of 2, and thus, in some of the steps either $p_{i-1}$ or $q_{i-1}$ or both are not divisible by 2. It is clear from the proof presented above that if we can define $(p_i,q_i)$ in such a way that in both cases (i.e., whether the $(p_i,q_i)$-property is satisfied or not), we have $\frac{p_i}{q_i} \leq \frac{p_{i-1}}{q_{i-1}}$, and also the total size of the bad sets (i.e., $|S|$) is at most $p$, the assertion can be deduced as above.
%(Note that in such a case, the $(p,2)$-theorem is applied for some integer $\bar{p}$ that satisfies $\frac{\bar{p}}{2} \leq \frac{p}{q}$, and thus, $\bar{p} \leq \lfloor 2p/q \rfloor$.)
We show that this can be achieved by a proper choice of $(p_i,q_i)$ and a slight modification of the steps described above. Let
\[
(p',q') = \left( \lfloor \frac{p_{i-1}}{2} \rfloor, \lceil \frac{q_{i-1}}{2} \rceil \right).
\]
If $\F_{i-1}$ satisfies the $(p',q')$-property, we define $\F_i=\F_{i-1}$ and $(p_i,q_i)=(p',q')$. Otherwise, there exists a `bad' set $S_i$ of size $p'$ that does not contain an intersecting $q'$-tuple, and the family $\F_{i-1} \setminus S_i$ satisfies the
\[
(p_{i-1}-p',q_{i-1}-q'+1) = \left( \lceil \frac{p_{i-1}}{2} \rceil, \lfloor \frac{q_{i-1}}{2} \rfloor +1 \right)
\]
property. In this case, we define $\F_i=\F_{i-1} \setminus S_i$ and $(p_i,q_i)= (p_{i-1}-p',q_{i-1}-q'+1)$.

It is easy to check that in both cases we have $\frac{p_i}{q_i} \leq \frac{p_{i-1}}{q_{i-1}}$, and that $|S| \leq p-1$ holds also with respect to the modified definition of the $S_i$'s. Hence, the proof indeed can be completed, as above.
\end{proof}

We also need the following easy extension of the classical Hadwiger-Debrunner theorem, obtained by Dol'nikov~\cite{Dol72}.
\begin{proposition}[\cite{Dol72}, Theorem~1]\label{Prop:Dol1}
Let $\F$ be a family that satisfies $\HD_{\F}(2,2)=1$. Then for any $p \geq q \geq 2$ such that $p \leq 2q-2$, we have $\HD_{\F}(p,q) =p-q+1$.
\end{proposition}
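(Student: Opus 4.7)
The plan is to prove the proposition by induction on $q$, using Observation~\ref{Obs:Dol} together with the hypothesis $\HD_\F(2,2) = 1$. The matching lower bound $\HD_\F(p,q) \geq p - q + 1$ is the standard construction recalled in the remark after Theorem~\ref{thm:HD-thm} (take $p - q$ pairwise disjoint members plus copies of a single additional set), so I focus on the upper bound $\HD_\F(p,q) \leq p - q + 1$.

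For the base case $q = 2$, the constraint $p \leq 2q - 2 = 2$ combined with $p \geq q$ forces $p = 2$, and the inequality is exactly the hypothesis. For the inductive step, fix $q \geq 3$ and a family $\F$ satisfying the $(p,q)$-property with $q \leq p \leq 2q - 2$, and put $\lambda = \nu(\F)$. First I will argue that $\lambda \leq p - q + 1$: otherwise $\F$ would contain $p - q + 2$ pairwise disjoint sets, and adjoining any $q - 2$ further members of $\F$ (possible since $|\F| \geq p$) would produce a $p$-tuple in which the intersecting $q$-subtuple guaranteed by the $(p,q)$-property could contain at most one of the disjoint sets, and hence at least $q - 1$ of the merely $q - 2$ adjoined sets, a contradiction.

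Having bounded $\lambda$, I split into two cases. If $\lambda = 1$ then every two members of $\F$ intersect, so $\HD_\F(2,2) = 1$ gives a single-point transversal, which is at most $p - q + 1$. If $\lambda \geq 2$, Observation~\ref{Obs:Dol} yields $\HD_\F(p,q) \leq \HD_\F(p - \lambda, q - 1) + \lambda - 1$; the pair $(p - \lambda, q - 1)$ lies in the inductive range since $\lambda \geq 2$ and $p \leq 2q - 2$ give $p - \lambda \leq 2(q-1) - 2$, while $\lambda \leq p - q + 1$ gives $p - \lambda \geq q - 1$. Applying the induction hypothesis to $(p - \lambda, q - 1)$ gives $\HD_\F(p - \lambda, q - 1) \leq (p - \lambda) - (q - 1) + 1$, and substituting back yields $\HD_\F(p, q) \leq p - q + 1$, as desired.

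The main (modest) obstacle is verifying the boundary conditions required to invoke the induction hypothesis, namely the preliminary bound $\lambda \leq p - q + 1$ (which pins down the lower endpoint $p - \lambda \geq q - 1$) and the fact that the regime $p \leq 2q - 2$ is preserved under the Wegner--Dol'nikov step precisely when $\lambda \geq 2$; together these explain why $2q - 2$ is the correct threshold. The remaining case $\lambda = 1$ is dispatched directly by the Helly-number-two hypothesis.
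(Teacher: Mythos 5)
Your proof is correct. The paper does not actually reprove Proposition~\ref{Prop:Dol1}; it cites it directly to Dol'nikov~\cite{Dol72} (the only inductive argument spelled out in the paper is for Theorem~\ref{Prop:Dol2}, which additionally uses a $(p,2)$-theorem for rectangles to handle the case where the packing number is small). So there is no in-paper proof to compare against, but your argument is the natural one and matches the pieces the paper does make explicit. Specifically, your preliminary bound $\nu(\F)\leq p-q+1$ is exactly the reasoning the paper uses near the end of the proof of Theorem~\ref{Thm:Main} (``we cannot move from $(p,q)$ to $(p',q-1)$ such that $p'<q-1$, since ... completing such a set to $p$ elements by adding $\leq q-2$ arbitrary elements...''), and the inductive engine is Observation~\ref{Obs:Dol}. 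Your case split ($\lambda=1$, use $\HD_\F(2,2)=1$ directly; $\lambda\geq 2$, descend via Observation~\ref{Obs:Dol}, with $p-\lambda\geq q-1$ coming from the bound on $\lambda$ and $p-\lambda\leq 2(q-1)-2$ coming from $\lambda\geq 2$) cleanly explains why $p\leq 2q-2$ is the correct threshold, and the arithmetic checks out. The one thing to note is that the key trade-off is already visible: without a $(p,2)$-theorem one cannot dispose of intermediate packing numbers, so the split must be exactly ``$\lambda=1$ versus $\lambda\geq 2$'', which is precisely what forces the range down to $p\leq 2q-2$ as opposed to the $p<\binom{q+1}{2}$ of Theorem~\ref{Prop:Dol2}.
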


%\begin{proof}
%Consider the maximal size $\lambda$ of a pairwise-disjoint subset of $\F$. If $\lambda=1$ then $\F$ satisfies the $(2,2)$-property, and thus, can be pierced with a single point, and we are done (as $1 \leq p-q+1$). Otherwise, by Observation~\ref{Obs:Dol} we have $\HD_{\F}(p,q) \leq \HD_{\F}(p-\lambda,q-1) + (\lambda-1)$ for some $\lambda \geq 2$. Applying this recursive formula inductively, and using as a basis the fact that $\HD_{\F}(r,r)=1$ for all $r \geq 2$, we obtain $\HD_{\F}(p,q) =p-q+1$ for all $p \leq 2q-2$, as asserted.
%\end{proof}

The third lemma we use concerns several properties of the function $T_c(p)$.
\begin{claim}\label{Cl:T(p)}
Let $f:[2,\infty) \rightarrow [1,\infty)$ be an increasing function of $p$, let $c>0$, and let $T_c(p)=\min \{q: q \geq 2c \cdot f(2p/q)\}$. Then:
\begin{enumerate}
\item For each $p$, the condition $q \geq 2c \cdot f(2p/q)$ holds for all $T_c(p) \leq q \leq p$.

\item $T_c(p)$ is a non-decreasing function of $p$.

%\item If $f,f'$ are increasing functions of $p$ such that $f(p) \geq f'(p)$ for all $p$, and $T,T'$ correspond to $f,f'$ (respectively), then $T_c(p) \geq T'_c(p)$ for all $p$.

\item If, in addition, $f$ satisfies $f'(p) \geq \frac{\log_2 e}{p}$ for all $p \geq 1$, then:
\begin{enumerate}
\item For all $k \in \mathbb{N}$, all $c \geq \frac{1}{2}\log_{2(k-1)/k} 2$ and all $p$ such that $k \leq T_c(p-1) \leq p-1$, we have $T_c(2p-1) \geq T_c(p-1)+1$.

\item For all $k \in \mathbb{N}$, all $0 < \alpha < \frac{k-1}{k}$, all $c \geq \frac{1}{2}\log_{(k-1)/(\alpha k) } 2$ and all $p$ such that $k \leq T_c(\alpha p) \leq \alpha p$, we have $T_c(p) \geq T_c(\alpha p)+1$.
\end{enumerate}
\end{enumerate}
\end{claim}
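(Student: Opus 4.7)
For parts (1) and (2), the arguments are simple monotonicity observations. For part (1), I observe that as $q$ increases from $T_c(p)$ up to $p$ with $p$ fixed, the argument $2p/q$ decreases, so $f(2p/q)$ decreases by monotonicity of $f$, while $q$ itself increases; hence the inequality $q \geq 2c \cdot f(2p/q)$, which holds at $q = T_c(p)$ by definition, is preserved throughout this range. For part (2), I will use the contrapositive: if $q < T_c(p)$, then by the minimality in the definition $q < 2c \cdot f(2p/q)$; for $p' > p$, the inequality $2p'/q > 2p/q$ combined with monotonicity of $f$ gives $2c \cdot f(2p'/q) \geq 2c \cdot f(2p/q) > q$, so $q$ still fails the condition at $p'$, yielding $T_c(p') \geq T_c(p)$.

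The two sub-parts of (3) are structurally identical: setting $q_0 := T_c(\text{smaller argument})$, I want to show that $q_0$ fails the threshold condition at the larger argument. I will argue by contradiction: assume $T_c(\text{larger}) \leq q_0$, so by part (1) applied at the larger argument, $q_0$ satisfies the condition there. I then show that this forces $q_0 - 1$ to satisfy the condition at the smaller argument, contradicting the definition of $T_c$ there.

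The key technical step, and the main obstacle, is to quantify the ``gap'' produced by passing from the smaller to the larger argument. For (a), I will compute the ratio of the two arguments of $f$ that appear in the two conditions, namely $\frac{(4p-2)/q_0}{(2p-2)/(q_0-1)} = \frac{(2p-1)(q_0-1)}{(p-1)\,q_0}$, and use the bounds $q_0 \geq k$ and $p \geq 2$ to show that this ratio is at least $\frac{2(k-1)}{k}$; for (b), the analogous ratio is $\frac{2p/q_0}{2\alpha p/(q_0-1)} = \frac{1}{\alpha}\cdot\frac{q_0-1}{q_0}$, which is at least $\frac{k-1}{\alpha k}$. Integrating the hypothesis $f'(t) \geq (\log_2 e)/t$ gives $f(y) - f(x) \geq \log_2(y/x)$ for any $y \geq x$ in the domain, so the difference in $f$-values between the larger and smaller arguments is at least $\log_2$ of the above ratios. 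Multiplying by $2c$ and invoking the prescribed lower bound on $c$, this difference reaches $1$ — exactly enough to turn the assumed inequality $q_0 \geq 2c \cdot f(\text{larger})$ into $q_0 - 1 \geq 2c \cdot f(\text{smaller})$, yielding the desired contradiction. I anticipate the main obstacle will be bookkeeping: matching each ratio bound to the correct $c$-threshold of the form $\tfrac12 \log_r 2$, and checking that all arguments of $f$ land in its domain $[2,\infty)$, which is what the hypotheses $T_c(\text{smaller}) \leq \text{smaller argument}$ are there to guarantee.
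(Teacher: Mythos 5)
Your proposal is correct and takes essentially the same approach as the paper: parts (1)--(2) are direct monotonicity observations, and part (3) hinges on integrating $f'(t)\geq(\log_2 e)/t$ to get $f(y)-f(x)\geq\log_2(y/x)$, lower-bounding the ratio of the two arguments of $f$ by $2(k-1)/k$ (resp.\ $(k-1)/(\alpha k)$), and matching against the hypothesis on $c$. The only cosmetic difference is that you phrase part (3) as a proof by contradiction, whereas the paper argues the contrapositive directly (from $q_0-1<2c\,f(\text{smaller})$ deduce $q_0<2c\,f(\text{larger})$); the underlying computation is identical.
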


\begin{proof}
Properties~(1),(2) follow immediately from the definition of $T_c(p)$ and the assumption that $f$ is increasing. To prove~(3a), consider some specific value of $p$ and denote $T_c(p-1)=q_0$. By the definition of $T_c(p-1)$, we have $q_0-1<2c \cdot f(\frac{2(p-1)}{q_0-1})$. We want to show that $q_0<2c \cdot f(\frac{2(2p-1)}{q_0})$ (which will imply $T_c(2p-1) \geq q_0+1$ by Property~(1)). It is clearly sufficient to show that for any $p$ for which $q_0=T_c(p-1) \geq k$, we have
\[
f \left(\frac{2(2p-1)}{q_0} \right) - f \left(\frac{2(p-1)}{q_0-1} \right) \geq \frac{1}{2c}.
\]
By the assumption on the derivative $f'$, for any $t>t'$ we have
\[
f(t)-f(t')= \int_{t'}^t f'(x)dx \geq \int_{t'}^t \frac{\log_2 e}{x} dx = \log_2 \left(\frac{t}{t'} \right).
\]
Hence,
\begin{align*}
f \left(\frac{2(2p-1)}{q_0}\right) - f \left(\frac{2(p-1)}{q_0-1}\right) &\geq \log_2 \left(\frac{2(2p-1)(q_0-1)}{2q_0(p-1)} \right) \\
&\geq \log_2 \left( \frac{2(q_0-1)}{q_0} \right) \geq \log_2 \left( \frac{2(k-1)}{k} \right),
\end{align*}
where the last inequality holds since $q_0 \geq k$ by assumption. Since $\frac{1}{2c} \leq \log_2 ( \frac{2(k-1)}{k})$ by the assumption on $c$, this completes the proof of~(3a). The proof of~(3b) is almost identical to the proof of~(3a), with a general $\alpha$ instead of $1/2$, and thus is omitted.
\end{proof}

\medskip \noindent As was outlined in Section~\ref{app:sub:outline}, in order to apply the inductive step of the proof, we have to assume that $\lambda=\nu(\F)$ is `sufficiently large'. Specifically, in the inductive step we move from a $(p',q')$-property to a $(p'-\lambda,q'-1)$-property. We assume that $(p',q')$ lies in the range covered by the theorem, i.e., that $q' \geq T_c(p')$, and want to deduce that $(p'-\lambda,q'-1)$ also lies in the range covered by the theorem, i.e., that $q'-1 \geq T_c(p'-\lambda)$. It is clearly sufficient to take $\lambda=\lambda(p')$ such that
\begin{equation}\label{Eq:Lambda1}
T_c(p'-\lambda) \leq T_c(p')-1.
\end{equation}
Our fourth lemma states how large should $\lambda$ be in order to guarantee this, for the particular choice $c=100$ that we use in Theorem~\ref{Thm:Main}.
\begin{lemma}\label{Lem:Lambda}
Assume that $(p',q')$ lies in the range covered by Theorem~\ref{Thm:Main}, i.e., that $q' \geq T_{100}(p')$. Then $q'-1 \geq T_{100}(0.99p')$, and thus, $(0.99p',q'-1)$ lies in the range covered by Theorem~\ref{Thm:Main} as well.
\end{lemma}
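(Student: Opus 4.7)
The plan is to derive the lemma directly from Claim~\ref{Cl:T(p)}(3b). Since by hypothesis $q' \geq T_{100}(p')$, in order to obtain $q'-1 \geq T_{100}(0.99\,p')$ it will suffice to prove the single-step monotone increase
\[
T_{100}(p') \;\geq\; T_{100}(0.99\,p') + 1,
\]
because then $q'-1 \geq T_{100}(p') - 1 \geq T_{100}(0.99\,p')$.

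To get this, I will invoke Claim~\ref{Cl:T(p)}(3b) with parameters $\alpha = 0.99$, $c = 100$, and a concrete integer $k$ to be chosen numerically. The choice $k = 153$ will do. The first hypothesis $\alpha < (k-1)/k$ becomes $0.99 < 152/153 \approx 0.9935$, which is immediate. The second hypothesis is the arithmetic inequality
\[
100 \;\geq\; \tfrac{1}{2}\log_{152/(0.99\cdot 153)} 2 \;=\; \frac{\ln 2}{2\,\ln\!\bigl(152/(0.99\cdot 153)\bigr)} \;\approx\; 99.2.
\]
This is the tight step of the proof and I expect it to be the only non-routine calculation: with $k = 152$ the right-hand side is already about $100.5$, so the value $k = 153$ is essentially forced, and this is precisely where the constant $100$ appearing in $T_{100}$ (and throughout Theorem~\ref{Thm:Main}) is calibrated against the contraction factor $0.99$.

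The third hypothesis of Claim~\ref{Cl:T(p)}(3b) is the bracketing $k \leq T_{100}(0.99\,p') \leq 0.99\,p'$. The upper bound is immediate from the definition of $T_{100}$ together with the growth assumptions on $f$: since $f(2) \geq 1$ and $f$ is increasing, the value $q = 0.99\,p'$ already satisfies $q \geq 200\,f(2\cdot 0.99\,p'/q) = 200\,f(2)$ once $p'$ exceeds a fixed absolute threshold, and by Claim~\ref{Cl:T(p)}(1) this forces $T_{100}(0.99\,p') \leq 0.99\,p'$ in that range. The lower bound $153 \leq T_{100}(0.99\,p')$ likewise holds for $p'$ above a fixed absolute threshold (using monotonicity of $T_{100}$ from Claim~\ref{Cl:T(p)}(2)); any pairs $(p',q')$ falling below this threshold satisfy $p' \leq 2q'-2$ and are therefore handled outright by Proposition~\ref{Prop:Dol1}, which serves as the induction basis of Theorem~\ref{Thm:Main}.

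With the three hypotheses of Claim~\ref{Cl:T(p)}(3b) verified, the claim itself delivers $T_{100}(p') \geq T_{100}(0.99\,p') + 1$, and the lemma follows as described in the opening paragraph.
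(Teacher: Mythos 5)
Your proof is correct and follows the same route as the paper: reduce the lemma to the single-step increase $T_{100}(p')\geq T_{100}(0.99\,p')+1$ and deduce it from Claim~\ref{Cl:T(p)}(3b) with $\alpha=0.99$, $c=100$, and a suitable $k$. The difference is your choice $k=153$ versus the paper's $k=200$, and this makes your argument more strained than it needs to be. The paper observes that $T_{100}(p)=\min\{q:q\geq 200 f(2p/q)\}\geq 200 f(2)\geq 200$ holds \emph{unconditionally} (since $f$ maps into $[1,\infty)$), so $k=200$ automatically satisfies the hypothesis $k\leq T_{100}(0.99\,p')$ with no threshold argument at all; and with $k=200$ the arithmetic condition becomes $100\geq\tfrac12\log_{199/198}2\approx 68.8$, a very comfortable margin. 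By contrast, you picked the smallest $k$ for which the arithmetic inequality barely holds, and then backfilled a spurious threshold/monotonicity argument to establish $153\leq T_{100}(0.99\,p')$ --- a bound that is in fact immediate from $T_{100}\geq 200$. Your remark that ``$k=153$ is essentially forced'' and that the constant $100$ is ``calibrated'' against the contraction factor $0.99$ at this value is misleading: increasing $k$ only \emph{weakens} the logarithmic constraint (while the hypothesis $k\leq T_{100}(\cdot)$ is free up to $200$), so there is no tension and the true bottleneck is not at $k=153$. The one point where you are actually more careful than the paper is in noting that the upper hypothesis $T_{100}(0.99\,p')\leq 0.99\,p'$ of Claim~\ref{Cl:T(p)}(3b) requires $p'$ to be above a fixed constant; the paper leaves this implicit, relying on the induction basis of Theorem~\ref{Thm:Main} (via Proposition~\ref{Prop:Dol1}) to absorb the small cases, which is exactly the resolution you describe. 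Net: correct proof, same idea, slightly less clean execution.
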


\begin{proof}
Denoting $\lambda(p')=\beta p'$, we can apply Claim~\ref{Cl:T(p)}(3b) with $\alpha=1-\beta$ to deduce that~\eqref{Eq:Lambda1} holds for all $c \geq \frac{1}{2}\log_{(k-1)/((1-\beta) k) } 2$, provided that $T_c((1-\beta)p) \geq k$ and $1-\beta<\frac{k-1}{k}$.

In the special case $c=100$, we may take $k=200$ (as $T_{100}(p) = \min \{q: q \geq 200 f(2p/q)\} \geq 200 f(2) \geq 200$ for any $p$). Hence, we may take $\alpha$ to be any number in $(0,\frac{199}{200})$ such that $100 \geq \frac{1}{2}\log_{199/200(1-\beta)} 2$. In particular, $\alpha = 0.99$ works.
\end{proof}

\subsection{Proof of Theorem~\ref{Thm:Main}}
\label{app:sub:proof}

We are ready to prove our main theorem. Let us recall its statement.

\medskip \noindent \textbf{Theorem~\ref{Thm:Main}.} Let $\F$ be a family of sets in $\Re^d$ such that $\HD_{\F}(2,2)=1$. Assume that for all $2 \leq p \in \mathbb{N}$ we have $\HD_{\F}(p,2) \leq p f(p)$, where $f: [2,\infty) \rightarrow [1,\infty)$ is a differentiable function of $p$ that satisfies $f'(p) \geq \frac{\log_2 e}{p}$ and $\frac{f'(p)}{f(p)} \leq \frac{5}{x}$ for all $p \geq 2$. Denote $T_c(p)=\min \{q: q \geq 2c \cdot f(2p/q)\}$. Then for any $p \geq q \geq 2$ such that $q \geq T_{100}(p)$, we have $\HD_{\F}(p,q)=p-q+1$.

\begin{proof}
By induction. We start with the inductive step, and leave the induction basis for the end.

Put $\lambda=\nu(\F)$. By Observation~\ref{Obs:Dol}, we have $\HD_{\F}(p,q) \leq \HD_{\F}(p-\lambda,q-1)+\lambda-1$. If $\lambda \geq 0.01p$, then by Lemma~\ref{Lem:Lambda}, the pair $(p-\lambda,q-1)$ satisfies the assumption of the theorem, and thus, by the induction hypothesis we have $\HD_{\F}(p-\lambda,q-1)=(p-\lambda)-(q-1)+1$, whence $\HD_{\F}(p,q)=p-q+1$ as asserted. Therefore, it is sufficient to prove that if $\lambda<0.01p$, then $\F$ can be pierced by at most $p-q+1$ points.

We apply Observation~\ref{Obs:our} to $\F$, with $(p',q')=\left(\frac{2}{3}p,\frac{q}{2}\right)$. (For the sake of simplicity, we assume that $p,q$ are divisible by $3$ and $2$, respectively. It will be apparent that this does not affect the proof). We have to consider two cases:

\medskip \noindent \textbf{Case 1: $\F$ satisfies the $(p',q')$-property.} Note that by the assumption on $\frac{f'(p)}{f(p)}$, for any $p \geq 2$ we have
\[
\ln( f(4p/3))-\ln(f(p))=\int_{p}^{4p/3} [\ln f(t)]' dt = \int_{p}^{4p/3} \frac{f'(t)}{f(t)} dt \leq \int_{p}^{4p/3} \frac{5}{t} dt = 5 \ln(4/3) < \ln 5,
\]
and thus, $f(4p/3) < 5f(p)$. By the assumption on $(p,q)$, we have $q \geq 200f(2p/q)$, and thus,
\[
\frac{q}{2} \geq 100f \left(\frac{2p}{q} \right) > 20f \left(\frac{4}{3} \frac{2p}{q} \right) = 20f \left( \frac{2 \cdot 2p/3}{q/2} \right).
\]
By the definition of $T_c$, this implies $\frac{q}{2} \geq T_{10}(\frac{2p}{3})$. Therefore, we can apply Lemma~\ref{Lem:App-Weak-(p,q)} to deduce
\[
\HD_{\F}(p',q')=\HD_{\F}\left(\frac{2}{3}p,\frac{q}{2}\right) \leq \frac{2}{3}p-\frac{1}{2}q+1+\frac{1}{10}\cdot \frac{2}{3}p<0.74p-0.5q+1.
\]
This shows that $\F$ can be pierced by less than $p-q+1$ points, if we may assume $0.26p \geq 0.5q$, or equivalently, $q \leq 0.52p$.

To see that we indeed may assume this, note that by Proposition~\ref{Prop:Dol1}, $\HD_{\F}(p,q)=p-q+1$ holds whenever $q \geq \frac{p}{2}+1$. Our theorem applies only for $q \geq 200$ (as for any `relevant' pair $(p,q)$ we have $q \geq 200f(2p/q) \geq 200 \cdot 1$), and in this range, $(q>0.52p) \Rightarrow (q > 0.5p+1)$. Thus, either the above argument implies $\HD_{\F} <p-q+1$, or Proposition~\ref{Prop:Dol1} implies $\HD_{\F}=p-q+1$, and either way we are done.

\medskip \noindent \textbf{Case 2: $\F$ does not satisfy the $(p',q')$-property.} In this case, there exists a `bad' subfamily $S$ of size $p'=\frac{2p}{3}$ that does not contain an intersecting $q'$-tuple, and the family $\F \setminus S$ satisfies the $(\frac{p}{3},\frac{q}{2})$-property.

To pierce the family $\F \setminus S$ we use Lemma~\ref{Lem:App-Weak-(p,q)}. By the monotonicity of $f$, we have $\frac{q}{2} \geq 100f \left(\frac{2p}{q} \right) \geq 100 f\left(\frac{2 \cdot (p/3)}{q/2} \right)$, and thus, $\frac{q}{2} \geq T_{50}(\frac{p}{3})$. Hence, Lemma~\ref{Lem:App-Weak-(p,q)} implies
\[
\HD_{\F}(p/3,q/2)=\leq \frac{p}{3}-\frac{q}{2}+1+\frac{1}{50}\cdot \frac{p}{3}=0.34p-0.5q+1,
\]
whence $\F \setminus S$ can be pierced by $0.34p-0.5q+1$ points.

To pierce the `bad' subfamily $S$, we use Lemma~\ref{Lem:Cute}, which implies that $S$ can be pierced by
\[
\lfloor \frac{|S|+\lambda}{2} \rfloor \leq \frac{p}{3}+0.005p \leq 0.34p.
\]
Therefore, $\F$ can be pierced by $0.68p - 0.5q+1$ points. As in Case~1, we may argue that either $0.68p - 0.5q+1 < p-q+1$ and we are done, or $q \geq \frac{p}{2}+1$ and then we are done by Proposition~\ref{Prop:Dol1}. This completes the inductive step.

\medskip \noindent To conclude the proof, we need the induction basis. The idea is to show that for
\[
q_0=\min\{q: \mbox{ there exists a `relevant' pair (p,q) }\}
\]
(where `relevant' means a pair $(p,q)$ that belongs to the range covered by the theorem), for all relevant pairs $(p,q_0)$ we have $p \leq 2q_0-2$, and thus $\HD_{\F}(p,q_0)=p-q_0+1$ holds by Proposition~\ref{Prop:Dol1}. This is a sufficient basis, since in the inductive process, $q$ is decreased by 1 in each step, and so we will eventually reduce to $q=q_0$, for which the assertion holds. Note that we cannot move from $(p,q)$ to $(p',q-1)$ such that $p'<q-1$, since this would mean that the family contains an independent set of size $\geq p-q+2$; completing such a set to $p$ elements by adding $\leq q-2$ arbitrary elements, we obtain a subfamily of $\F$ of size $p$ without an intersecting $q$-tuple, a contradiction.

By Claim~\ref{Cl:T(p)}(2), $T_c(p)$ is increasing in $p$, and thus, if for some $q$ there exists a $p$ such that $q \geq T_c(p)$, then we also have $q \geq T_c(q)$. Hence, for each $q$, the smallest $p$ for which $(p,q)$ lies in the range covered by the theorem is $q$ itself. Consequently, the smallest $q$ for which there exists a `relevant' $(p,q)$ is equal to the smallest $p$ for which there exists a `relevant' $(p,q)$. Denote this value by $q_0$. By its minimality, we have $q_0-1<T_{100}(q_0-1)$. Therefore, by Claim~\ref{Cl:T(p)}(3a) we have $q_0<T_{100}(2q_0-1)$. (Note that in order to apply the claim, we need $c \geq \frac{1}{2}\log_{2(k-1)/k} 2$, where $k$ is a lower bound on $T_c(q_0-1)$. This indeed holds for $c=100$, as we can take $k=200$ as a lower bound, as mentioned above.) As $T_c(p)$ is increasing in $p$, this implies that $\{p:T_{100}(p)=q_0\} \subset \{q_0,q_0+1,\ldots,2q_0-2\}$. Therefore, by Proposition~\ref{Prop:Dol1}, we have $\HD_{\F}(p,q_0)=p-q_0+1$ for all `relevant' $(p,q_0)$. This completes the proof of the induction basis, and hence the proof of the theorem.
\end{proof}

\subsection{Proof of Theorem~\ref{Cor:Box}}
\label{app:sub:box}

We conclude this appendix with the simple deduction of Theorem~\ref{Cor:Box} from Theorem~\ref{Thm:Main}. Let us recall the statement of the theorem.

\medskip \noindent \textbf{Theorem~\ref{Cor:Box}.} Let $\F$ be a family of axis-parallel boxes in $\mathbb{R}^d$. Then $\HD_{\F}(p,q)=p-q+1$ holds for all $q> c\log^{d-1}(p)$, where $c$ is a universal constant.

\begin{proof}
The $(p,2)$-theorem for axis-parallel boxes in $\Re^d$~\cite{Kar91} yields $\HD_{\F}(p,2) \leq O(p \log_2^{d-1}(p))$, which means that $\HD_{\F}(p,2) \leq p f(p)$ holds for $f(p)=c' \log_2^{d-1}(p)$ (where $c'$ is a universal constant). For this $f(p)$, we have $T_{100}(p) \leq 200c' \log_2^{d-1}(p)$. Hence, the assertion of Theorem~\ref{Cor:Box} will follow from Theorem~\ref{Thm:Main}, once we verify that $f(p)$ satisfies the conditions of the theorem. The condition regarding $f'(p)$ is clearly satisfied: we have $f'(p)=c' (d-1) \log_2^{d-2}(p) \log_2 e/p \geq \log_2 e /p$, for all $p \geq 2$ and $c' \geq 1$. As for the condition regarding $f'(p)/f(p)$, we observe that in the proof of Theorem~\ref{Thm:Main}, this condition is applied only for values of $p$ for which there exists a `relevant' pair $(p,q)$, and thus, it is sufficient to show that it holds for all such values. We have
\[
\frac{f'(p)}{f(p)} = \frac{c' (d-1) \log_2^{d-2}(p) \log_2 e}{c' p \log_2^{d-1}(p)} = \frac{(d-1) \log_2 e}{p \log_2 p},
\]
and so we have to show that
\begin{equation}\label{Eq:App-Aux1}
\frac{(d-1) \log_2 e}{p \log_2 p} \leq \frac{5}{p}.
\end{equation}
This indeed holds in all the required range, since if there exists a `relevant' pair $(p,q)$ then $p \geq 200 \log^{d-1}(p)$, and thus, $\log_2 p \geq (d-1) \log_2 \log_2 p \geq d-1$, which clearly implies~\eqref{Eq:App-Aux1}. This completes the proof.
\end{proof}

\subsection{Proof of Observation~\ref{Obs:Dol}}
\label{app:obs-proof}

For the sake of completeness, we present in this subsection the proof of Observation~\ref{Obs:Dol}, due to Wegner~\cite{Weg65} and (independently) Dol'nikov~\cite{Dol72}. Let us recall its formulation.

\medskip \noindent \textbf{Observation~\ref{Obs:Dol}.}
Let $\F$ be a family that satisfies $\HD_{\F}(2,2)=1$, and put $\lambda=\nu(\F)$. Then
\[
\HD_{\F}(p,q) \leq \HD_{\F}(p-\lambda,q-1) + \lambda-1.
\]

\begin{proof}
The slightly weaker bound $\HD_{\F}(p,q) \leq \HD_{\F}(p-\lambda,q-1) + \lambda$ holds trivially, and does not even require the assumption $\HD_{\F}(2,2)=1$. Indeed, if $\mathcal{S}$ is a pairwise-disjoint subset of $\F$ of size $\lambda$, then $\F \setminus \mathcal{S}$ satisfies the $(p-\lambda,q-1)$-property, and thus, can be pierced by $\HD_{\F}(p-\lambda,q-1)$ points. As $\mathcal{S}$ clearly can be pierced by $\lambda$ points, we obtain $\HD_{\F}(p,q) \leq \HD_{\F}(p-\lambda,q-1) + \lambda$.

To get the improvement by 1, let $\mathcal{S}$ be a pairwise-disjoint subfamily of $\F$ of size $\lambda=\nu(\F)$ and let $T$ be a transversal of $\F \setminus \mathcal{S}$ of size $\HD_{\F}(p-\lambda,q-1)$. Take an arbitrary $x \in T$, and consider the subfamily $\mathcal{X}=\{A \in \F \setminus \mathcal{S}: x \in A\}$ (i.e., the sets in $\F \setminus \mathcal{S}$ pierced by $x$). By the maximality of $\mathcal{S}$, each $A \in \mathcal{X}$ intersects some $B \in \mathcal{S}$. Hence, we can write
$\mathcal{\mathcal{X}}=\cup_{B \in \mathcal{S}} \mathcal{X}_B$, where $\mathcal{X}_B = \{A \in \mathcal{X}: A \cap B \neq \emptyset\}$. Observe that for each $B$, the set $\mathcal{X}_B \cup \{B\}$ is pairwise-intersecting. Indeed, any $A,A' \in \mathcal{X}$ intersect in $x$, and all elements of $\mathcal{X}_B$ intersect $B$. Therefore, by the assumption on $\F$, each $\mathcal{X}_B \cup \{B\}$ can be pierced by a single point. Since $\mathcal{X} = \cup_{B \in \mathcal{S}} \mathcal{X}_B$, this implies that there exists a transversal $T'$ of $\mathcal{X} \cup \mathcal{S}$ of size $|\mathcal{S}|=\lambda$. Now, the set $(T \setminus \{x\}) \cup T'$ is the desired transversal of $\F$ with $\HD_{\F}(p-\lambda,q-1) + \lambda-1$ points.
\end{proof}

\section{Proof of Theorem~\ref{Thm:New-(p,2)2}}
\label{app:(p,2)}

In this appendix we prove Theorem~\ref{Thm:New-(p,2)2}, which provides $(p,2)$-theorems for families of compact convex sets that admit a $(2,2)$-theorem. In addition, for sake of completeness we present an example, due to Fon der Flaass and Kostochka~\cite{FdFK93}, of a set system (i.e., a hypergraph) with Helly number 2 that does not admit a $(3,2)$-theorem, thus showing that in general, the existence of a $(2,2)$-theorem does not imply the existence of a $(p,2)$-theorem.

\medskip Let us restate Theorem~\ref{Thm:New-(p,2)2} in a more precise form.

\medskip \noindent \textbf{Theorem~\ref{Thm:New-(p,2)2} (precise formulation).} Let $\F$ be a family of compact convex sets in $\Re^d$ such that $\HD_{\F}(2,2)=t$. Then:
\begin{enumerate}
\item We have
\[
\HD_{\F}(p,2) = \tilde{O} \left(4^{pd \cdot \frac{(p/t)-1}{(p/t)-d}}\right).
\]
In particular, $\F$ admits a $(p,2)$-theorem for piercing with a bounded number $s=s(p,d,t)$ of points.

\item If $d=2$, we have $\HD_{\F}(p,2) = O_t(p^8 \log^2 p)$.

\item If $d=2$ and the VC-dimension of $\F$ is $k$, then $\HD_{\F}(p,2) = O_{t,k}(p^4 \log^2 p)$.
\end{enumerate}

\noindent Two remarks are due at this point.
\begin{remark}
The difference between the general case (Part~(1) of the theorem) and the planar case (Parts~(2,3) of the theorem) looks surprisingly huge. We do not know whether any of these results are tight; however, the difference is well-explained by the proof method. While in the proof of Parts~(2,3) we use a Ramsey-type theorem for families of convex sets in the plane of Larman et al.~\cite{PT94} (Theorem~\ref{Thm:Larman} below) in which the `Ramsey number' $R(k)$ is polynomial in $k$, in the general case we have to resort to the classical Ramsey theorem in which $R(k)$ is exponential in $k$. This is in a sense necessary, since Tietze~\cite{Tie05} and (independently) Besicovitch~\cite{Bes47} showed that any graph can be represented as the intersection graph of a family of convex sets in $\Re^3$, which implies that no `Ramsey theorem for convex sets in $\Re^d$' for $d \geq 3$ can improve over the classical Ramsey theorem.
\end{remark}

\begin{remark}
As mentioned in the introduction, Matou\v{s}ek~\cite{MAT04} showed that families of sets with dual VC-dimension $k$ have fractional Helly number at most $k+1$. This allows deducing that such families admit a $(p,k)$-theorem, using the proof technique of the Alon-Kleitman $(p,q)$-theorem. This result (which applies in a much more general setting than Part~(3) of our theorem) does not imply our theorem, since it yields a $(p,2)$-theorem only for families with dual VC-dimension $1$, while our theorem applies whenever the VC-dimension is bounded.
\end{remark}

\medskip The proof of the theorem is a combination of three tools:

\medskip The first is a Ramsey-type theorem. Recall that the classical Ramsey theorem~\cite{Ram30} asserts that for any $k$, there exists $R(k)$ such that any graph on $R(k)$ vertices contains either a complete subgraph on $k$ vertices or an empty subgraph on $k$ vertices. Ramsey showed that $R(k) \leq {{2k-2}\choose{k-1}} \leq 4^k$. As the best currently known upper bound is not much lower, we use the upper bound $R(k) \leq 4^k$ for sake of simplicity.

The Ramsey theorem implies that any family of $R(k)$ sets contains either a pairwise-intersecting subfamily of size $k$ or a pairwise-disjoint subfamily of size $k$. Larman et al.~\cite{PT94} showed that for families of compact convex sets in the plane, a significantly better result can be achieved.
\begin{theorem}[\cite{PT94}]\label{Thm:Larman}
Let $\F$ be a family of $k \ell^4$ compact convex sets in the plane. Then $\F$ contains either $k$ pairwise-intersecting sets or $\ell$ pairwise-disjoint sets.
\end{theorem}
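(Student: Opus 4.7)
The strategy is an iterated Mirsky's-theorem argument applied to four partial orders that together encode the disjointness relation. The key structural lemma would be that there exist four strict partial orders $<_1, <_2, <_3, <_4$ on $\F$ such that any two disjoint members of $\F$ are comparable under at least one $<_i$. The natural construction comes from the fact that two disjoint compact convex sets in the plane admit a strictly separating line: classify the direction of such a line into one of four $90^\circ$ sectors of normal directions, and declare $C <_i C'$ when $C$ and $C'$ are disjoint via a separating line whose normal lies in the $i$-th sector, with $C$ on the negative side. Transitivity within each sector requires a short geometric verification (two separators whose normals lie in a common sector compose into a third such separator), and this transitivity is what makes the constant ``four'' work.

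Given the structural lemma, the Ramsey bound follows by iterated Mirsky. Start from $|\F| = k\ell^4$. For each $i = 1, 2, 3, 4$ in turn, apply Mirsky's theorem to the current subfamily under $<_i$: either a $<_i$-chain of length $\ell$ exists, in which case these $\ell$ sets form a pairwise-disjoint $\ell$-tuple and we are done; or the current subfamily has an antichain-cover of size at most $\ell - 1$, hence contains a $<_i$-antichain of size at least (current size)$/(\ell-1)$, to which we restrict. After four rounds, either we produced a pairwise-disjoint $\ell$-tuple along the way, or we arrive at a subfamily of size at least $k \ell^4 / (\ell-1)^4 \geq k$ that is simultaneously an antichain under every $<_i$. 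By the structural lemma, no two members of this final subfamily can be disjoint, so they form a pairwise-intersecting $k$-tuple.

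The main obstacle is the structural lemma itself: pinning down four partial orders whose union is precisely the full disjointness relation, rather than only the pairs separable by axis-parallel lines. The naive choice based on comparing leftmost/rightmost and topmost/bottommost coordinates only handles horizontally or vertically separated pairs and misses all disjoint pairs whose only separator is strictly diagonal. A proper construction must canonically assign to each disjoint pair a separating line — for example, a maximum-margin separator or one determined by a specific pair of supporting points — and then verify that when two such canonical separators have normals in the same $90^\circ$ sector, the corresponding ``lies-on-the-negative-side-of'' relation really is transitive. Once the four partial orders are in place, the asymmetric bound $k\ell^4$ is forced: the four factors of roughly $\ell$ come from the four Mirsky iterations, and the factor $k$ is exactly the size of the final simultaneous antichain.
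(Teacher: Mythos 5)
The paper does not give a proof of Theorem~\ref{Thm:Larman}; it is cited as a black box from Larman, Matou\v{s}ek, Pach and T\"or\H{o}csik~\cite{PT94}. So there is no in-paper argument to compare against, and I will evaluate your proposal on its own.

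Your high-level skeleton is exactly the one used in~\cite{PT94}: exhibit four strict partial orders on the family whose union covers the disjointness relation, then apply Mirsky's theorem four times. The Mirsky bookkeeping is fine, and the final bound $k\ell^4/(\ell-1)^4\geq k$ does yield the statement. What is missing is the structural lemma, which is the entire non-trivial content of the theorem, and the specific construction you propose does not work. The relation ``$C<_iC'$ iff some line with normal in the $i$-th $90^\circ$ sector strictly separates them with $C$ on the negative side'' is not transitive, even with $90^\circ$ sectors: take $C=B((-1,2),\tfrac12)$, $C'=B((1,-1),\tfrac12)$, $C''=B((-2,1),\tfrac12)$. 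These are pairwise disjoint; the vertical line $x=0$ (normal at $0^\circ$) gives $C<_1C'$, a near-horizontal line $y\approx 0$ (normal just below $90^\circ$) gives $C'<_1C''$, yet every line separating $C$ from $C''$ with normal in $[0^\circ,90^\circ)$ puts $C''$, not $C$, on the negative side, because the vector from the center of $C$ to the center of $C''$ is $(-1,-1)$, which has negative inner product with every normal in that sector. So $C\not<_1 C''$ (indeed $C''<_1C$), and $<_1$ is not a partial order. You anticipate this and float a max-margin variant, but you neither verify its transitivity nor give any reason it should hold for general convex bodies; it is not a ``short geometric verification.''

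You are also candid that ``the main obstacle is the structural lemma itself,'' and that is accurate: the actual construction in~\cite{PT94} does not assign a canonical separating direction and bin it by sector. It fixes a lexicographically extremal point $a(C)$ per set, orders disjoint pairs by $a(C)\lex a(C')$, and then distinguishes four cases according to how the segment $[a(C),a(C')]$ meets the two sets; each of the four resulting relations is shown to be transitive by a planarity argument, and that is the genuinely delicate step. As written, your proposal correctly identifies the shape of the proof (four posets plus iterated Mirsky) but leaves its crucial lemma unproved, and the one concrete candidate you suggest for the four relations is refuted by the example above.
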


The second result is a quantitative bound for the Alon-Kleitman $(p,q)$-theorem obtained in~\cite[Theorem~1.3]{KST17}:
\begin{theorem}[\cite{KST17}]\label{Thm:KST}
Let $\F$ is a family of compact convex sets in $\Re^d$. Then
$$
\HD_{\F}(p,q) \leq \begin{cases}
\mathrm{(a)} \quad O\left(p^{d \cdot \frac{q-1}{q-d}} \log^{cd^3 \log d} p\right) = \tilde{O} \left(p^{d \cdot \frac{q-1}{q-d}} \right), & \textrm{ for all } q \geq d+1; \\
\mathrm{(b)} \quad \tilde{O} \left(p+ \left(\frac{p}{q}\right)^d \right) & \textrm{ if } q \geq \log p.\\
\end{cases}$$
$\mathrm{(c)}$ Furthermore, for $d=2$, the bound in~(b) can be replaced by $p-q+ O\left(\left(\frac{p}{q}\right)^2 \log^{2} \left(\frac{p}{q} \right) \right)$.
\end{theorem}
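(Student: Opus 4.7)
The plan is to prove all three bounds via a common two-step scheme: (i) use a Ramsey-type theorem appropriate to the setting to upgrade the $(p,2)$-property to a $(P,Q)$-property with $Q$ much larger than $2$, and (ii) feed the resulting $(P,Q)$-property into Theorem~\ref{Thm:KST} as a black box.

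The engine behind step (i) is the following simple observation. If $\F$ has the $(p,2)$-property, any pairwise-intersecting subfamily $\G \subseteq \F$ satisfies the $(2,2)$-property and can therefore be pierced by $t$ points by hypothesis; hence some point pierces at least $\lceil |\G|/t\rceil$ members of $\G$. If a Ramsey-type statement guarantees that any $N$ members of $\F$ contain a pairwise-intersecting subfamily of size $k$, we immediately obtain the $(N,\lceil k/t \rceil)$-property. For Part~(1) we invoke Ramsey's theorem $R(k)\leq 4^k$: since $\F$ has no $p$ pairwise-disjoint sets, any $4^p$ members contain $p$ pairwise-intersecting ones, and the observation yields the $(4^p,\lceil p/t\rceil)$-property. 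Feeding $P=4^p$ and $Q=\lceil p/t\rceil$ into Theorem~\ref{Thm:KST}(a), which applies provided $Q\geq d+1$ (equivalently $p\geq (d+1)t$, the only range in which the bound is substantive), yields
\[
\HD_{\F}(p,2) \leq \tilde{O}\bigl(P^{d(Q-1)/(Q-d)}\bigr) = \tilde{O}\bigl(4^{pd(p/t-1)/(p/t-d)}\bigr),
\]
which is precisely the claim.

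For Part~(2) we replace classical Ramsey with Theorem~\ref{Thm:Larman}: any $k\ell^4$ compact convex sets in the plane contain either $k$ pairwise-intersecting or $\ell$ pairwise-disjoint ones. Setting $\ell=p$ rules out the second alternative, and the observation gives the $(kp^4,\lceil k/t\rceil)$-property. We now invoke the sharp planar estimate Theorem~\ref{Thm:KST}(c), which requires $Q\geq \log P$; taking $k=Ct\log p$ for a sufficiently large absolute constant $C$ yields $Q=C\log p\geq \log(Ctp^4\log p)=\log P$, $P/Q = tp^4$, and hence
\[
\HD_{\F}(p,2) \leq P - Q + O\bigl((P/Q)^2 \log^2(P/Q)\bigr) = O_t(p^8 \log^2 p).
\]
For Part~(3) the same scheme applies but with a sharper Ramsey-type inequality: under the VC-dimension hypothesis I expect a bound of the form $N(k',\ell) = O_k(k' \ell^2)$ in place of $k' \ell^4$, essentially because bounded VC-dimension forbids the shattered configurations that drive the $\ell^4$ factor in Larman's argument. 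Rerunning the calculation with this improved Ramsey bound gives $P=\Theta_k(tp^2\log p)$, $P/Q=\Theta_k(tp^2)$, and the desired $O_{t,k}(p^4 \log^2 p)$ estimate.

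The routine part of the argument is the reduction together with the arithmetic of plugging into Theorem~\ref{Thm:KST}; the only subtleties are verifying the side conditions ($Q\geq d+1$ for (a), $Q\geq \log P$ for (c)) and checking that the polylogarithmic factors hidden in $\tilde{O}$ are absorbed into the stated bounds. The main obstacle is Part~(3): establishing the refined planar Ramsey-type statement under a VC-dimension hypothesis. I plan to derive it either by combining a VC-dimension-based bound on the independence number of the ``disjointness graph'' of $\F$ with a greedy extraction of pairwise-intersecting clusters, or by sidestepping Ramsey entirely and combining the bounded VC-dimension with Matou\v{s}ek's fractional-Helly/$\varepsilon$-net machinery directly in step (ii), which is likely to give a cleaner numerical gain.
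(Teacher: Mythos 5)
There is a fundamental mismatch here: the statement you were asked to prove is Theorem~\ref{Thm:KST} itself --- the quantitative $(p,q)$-theorem of~\cite{KST17} for \emph{arbitrary} families of compact convex sets in $\Re^d$ --- but your proposal never proves it. On the contrary, you explicitly ``feed the resulting $(P,Q)$-property into Theorem~\ref{Thm:KST} as a black box,'' i.e.\ you assume the very statement to be established, which is circular as a proof of that theorem. What you have actually sketched is a proof of Theorem~\ref{Thm:New-(p,2)2} (the $(p,2)$-theorems for families admitting a $(2,2)$-theorem); for Parts~(1) and~(2) your Ramsey-plus-piercing reduction indeed coincides with the argument the paper gives in Appendix~\ref{app:(p,2)}, but that is a different result. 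In this paper Theorem~\ref{Thm:KST} is an imported citation with no internal proof; proving it requires the Alon--Kleitman machinery (fractional Helly, LP duality/selection, weak $\eps$-nets) sharpened as in~\cite{KST17}, none of which appears in your write-up. So as an attempt at the stated theorem, the proposal is off target and contains no argument toward it.

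Even read charitably as a proof of Theorem~\ref{Thm:New-(p,2)2}, Part~(3) has a genuine gap: you rest the $p^4$ bound on a conjectured planar Ramsey-type estimate of the form $O_k(k'\ell^2)$ under bounded VC-dimension, which you do not establish and which does not follow from bounded VC-dimension of the primal range space (nothing prevents large ``shattered-free'' families from still exhibiting the $\ell^4$ behaviour in Larman-type constructions). The paper obtains the improvement elsewhere: it keeps Larman's $k\ell^4$ bound to get a $(tp^4q,q)$-property for every $q$, runs the Alon--Kleitman selection step, and then replaces the \emph{weak} $\eps$-net theorem by the Haussler--Welzl $\eps$-net theorem (size $O_k(\frac{1}{\eps}\log\frac{1}{\eps})$), which is where the VC-dimension hypothesis is used; optimizing $q=\log p$ then yields $O_{k,t}(p^4\log^2 p)$. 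Your alternative suggestion of ``combining bounded VC-dimension with fractional-Helly/$\eps$-net machinery directly'' points in that direction, but it is left entirely unexecuted.
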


The third result is the $\epsilon$-net theorem for families with a bounded VC-dimension. Let us recall a few definitions.

For a set system $(U,R)$, where $U$ is a set of points and $R \subset \mathcal{P}(U)$ is a set of ranges (or alternatively, a hypergraph $(U,R)$ in which $U$ is the set of vertices and $R$ is the set of hyperedges), we say that a set $Y \subset U$ is \emph{shattered} by $R$ if every subset of $Y$ can be obtained as the intersection of $Y$ with some range $e \in R$. The \emph{VC-dimension} of $R$ is the maximal size of a set $Y$ that is shattered by $R$. For example, any set of three non-collinear points in the plane can be shattered by halfplanes, but no four points can be. Hence, the VC-dimension of halfplanes in the plane is 3. This notion was introduced by Vapnik and Chervonenkis~\cite{VC71}.

An $\epsilon$-net of $(U,R)$ is a subset $S \subset U$, such that any range $e \in R$ that contains at least $\epsilon$-fraction of the elements of $U$, intersects $S$.

The $\epsilon$-net theorem of Haussler and Welzl~\cite{HW} asserts the following:
\begin{theorem}[The $\epsilon$-net theorem,~\cite{HW}]\label{Thm:eps-net}
Let $(U, R)$ be a range space of VC-dimension $k$, let $A$ be a finite subset of $U$ and suppose $0 <\epsilon,\delta < 1$. Let $N$ be a set obtained by $m$ random independent draws from $A$, where
\[
m \geq \max \left(\frac{4}{\epsilon} \log \frac{2}{\delta}, \frac{8k}{\epsilon} \log \frac{8k}{\epsilon} \right).
\]
Then $N$ is an $\epsilon$-net for $A$ with probability at least $1-\delta$.
\end{theorem}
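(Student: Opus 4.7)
The plan is to prove the $\eps$-net theorem by the classical \emph{double sampling} technique, combined with the Sauer--Shelah bound on the shatter function of a range space of bounded VC-dimension. All draws are assumed independent and uniform from $A$, possibly with repetition, so that $N$ is a multiset of size $m$; since the property of being an $\eps$-net only improves under repetition, this is harmless.

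Let $E_1$ be the event that $N$ is not an $\eps$-net: some range $e \in R$ with $|e \cap A| \geq \eps|A|$ satisfies $e \cap N = \emptyset$. I would draw an independent second sample $N'$ of size $m$ and let $E_2$ be the event that some range $e \in R$ satisfies $e \cap N = \emptyset$ \emph{and} $|e \cap N'| \geq \tfrac{\eps m}{2}$. The first step is to show $\Pr[E_2] \geq \tfrac{1}{2}\Pr[E_1]$. Conditionally on $E_1$ occurring via a specific witnessing range $e$ (chosen as a function of $N$), the count $|e \cap N'|$ is a sum of $m$ i.i.d.\ Bernoulli variables with mean $|e \cap A|/|A| \geq \eps$; a Chebyshev-type tail bound, which needs only $m \geq 8/\eps$ (implied by the hypothesis since $k \geq 1$), gives $\Pr[|e \cap N'| \geq \eps m/2 \mid E_1] \geq 1/2$, hence $\Pr[E_2] \geq \tfrac{1}{2}\Pr[E_1]$.

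The second step is a symmetry argument bounding $\Pr[E_2]$. The concatenation $Z = (N,N')$ is an i.i.d.\ sample of size $2m$; equivalently, one may first draw $Z$ and then pick a uniformly random subset of $m$ out of the $2m$ positions to form $N$, the rest being $N'$. Conditioning on $Z$, the event $E_2$ depends only on which subsets of $Z$ arise as traces $e \cap Z$ with $|e \cap Z| \geq \eps m / 2$ for some $e \in R$. By the Sauer--Shelah lemma, the number of distinct traces $\{e \cap Z : e \in R\}$ is at most $\sum_{i=0}^k \binom{2m}{i} \leq (2em/k)^k$. For any fixed $T \subseteq Z$ of size $t \geq \eps m/2$, the probability that the random partition places all of $T$ in $N'$ is $\binom{2m-t}{m}/\binom{2m}{m} \leq 2^{-t} \leq 2^{-\eps m/2}$. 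A union bound therefore gives
\[
\Pr[E_2 \mid Z] \leq \left(\tfrac{2em}{k}\right)^{\!k} 2^{-\eps m/2},
\]
and the same bound holds unconditionally.

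Combining the two steps yields $\Pr[E_1] \leq 2(2em/k)^k 2^{-\eps m/2}$. To finish, one checks that the hypothesis $m \geq \max\!\left(\tfrac{4}{\eps}\log\tfrac{2}{\delta},\, \tfrac{8k}{\eps}\log\tfrac{8k}{\eps}\right)$ is precisely what is needed to drive this bound below $\delta$: the second term absorbs $(2em/k)^k$ into the factor $2^{\eps m/4}$, and the first term makes the residual factor $2\cdot 2^{-\eps m/4}$ at most $\delta$. The main delicate point is this last constant-tuning step, where the two summands in the hypothesis on $m$ handle, respectively, the VC-combinatorial factor $(2em/k)^k$ and the confidence factor $\log(2/\delta)$; the other subtle point is the Chebyshev estimate in the double-sampling reduction, which is what forces the constant $8$ (rather than a smaller one) inside $\log(8k/\eps)$.
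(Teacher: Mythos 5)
The paper does not prove this statement at all: it is quoted as a black box from Haussler--Welzl~\cite{HW} and used only in the proof of Theorem~\ref{Thm:New-(p,2)2}(3), so there is no internal proof to compare against. Your argument is exactly the classical double-sampling proof from that source: the reduction $\Pr[E_2]\ge\tfrac12\Pr[E_1]$ via Chebyshev (valid once $m\ge 8/\eps$, which the hypothesis gives), the exchangeability step of drawing $Z$ of size $2m$ and splitting it at random, the bound $\binom{2m-t}{m}/\binom{2m}{m}\le 2^{-t}$, and the Sauer--Shelah count of traces are all correct and are the same steps as in~\cite{HW}.

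The one soft spot is the final constant-tuning, which you assert (``precisely what is needed'') rather than verify, and which in fact does not go through with the lossy estimate $\sum_{i\le k}\binom{2m}{i}\le(2em/k)^k$. For example, take $k=1$, $\eps=0.99$, $\delta=0.03$: the hypothesis only requires $m\ge\max(24.5,\,24.4)$, so $m=25$ is allowed, yet $2\cdot(2e\cdot 25)\cdot 2^{-0.99\cdot 25/2}\approx 0.05>\delta$, so your final inequality $2(2em/k)^k2^{-\eps m/2}\le\delta$ fails there. The theorem itself is true with these constants; the fix is to keep the exact shatter-function bound $\Phi_k(2m)=\sum_{i=0}^{k}\binom{2m}{i}$ (for $k=1$ this is $2m+1$ rather than $2em\approx 5.4m$), which is what Haussler and Welzl do, and then the split of $2^{-\eps m/2}$ into two factors $2^{-\eps m/4}$, one absorbing $\Phi_k(2m)$ via the second term in the hypothesis on $m$ and one absorbing $2/\delta$ via the first, does close. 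So: right approach and correct core argument, but the last step needs the sharper trace count (or slightly larger constants in $m$) to yield the statement exactly as given.
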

In particular, any family with VC-dimension $k$ admits an $\epsilon$-net of size $O_k(\frac{1}{\epsilon} \log \frac{1}{\epsilon})$.

\medskip Now we are ready to present the proof of the theorem.

\begin{proof}[Proof of Theorem~\ref{Thm:New-(p,2)2}] \textbf{Part (1).} As any family that satisfies the $(p,q)$-property, clearly satisfies the $(p',q)$-property for all $p' \geq p$ (if it contains at least $p'$ sets), and as we are not interested in constant factors, we may assume that $p$ is larger than any prescribed constant; in particular, we may assume $p \geq (d+1)t$. Let $\F$ be a family that satisfies the assumptions of the theorem and has the $(p,2)$-property. We claim that $\F$ satisfies the $(4^p,\lceil p/t \rceil)$-property.

To prove this, let $S$ be a subfamily of $\F$ of size $4^p$. We have to show that $S$ contains an intersecting $\lceil p/t \rceil$-tuple.

By the Ramsey theorem~\cite{Ram30}, either $S$ contains $p$ pairwise intersecting sets, or else it contains $p$ pairwise disjoint sets. The latter is impossible since $\F$ satisfies the $(p,2)$-property. Hence, $S$ contains a pairwise intersecting subfamily $T$ of size $p$. As $T$ satisfies the $(2,2)$-property, by the assumption on $\F$ it can be pierced by $t$ points. The largest among the subsets of $T$ pierced by a single point is of size $\geq \lceil p/t \rceil$, and so, $S$ contains an intersecting $\lceil p/t \rceil$-tuple, as asserted.

Since $\lceil p/t \rceil \geq d+1$ by assumption, we can apply Theorem~\ref{Thm:KST}(1) to $\F$ to deduce that
\[
\HD_{\F}(p,2) = \tilde{O} \left( \left(4^{p}\right)^{d \cdot \frac{\lceil p/t \rceil -1} {\lceil p/t \rceil-d}} \right) = \tilde{O} \left(4^{pd \cdot \frac{(p/t)-1}{(p/t)-d}} \right),
\]
completing the proof.

%Otherwise, we note that if some family satisfies the $(p,q)$-property then it clearly satisfies the $(p',q)$-property for all $p' \geq p$. Hence, in the case $\lceil p/t \rceil < d+1$, we may assume that $\F$ satisfies the $((d+1)t,2)$ property, whence by the above argument $\F$ satisfies the $(4^{(d+1)t},d+1)$-property. By~\cite[Theorem~1.3(a)]{KST17}, this implies  $\HD_{\F}(p,2) \leq 4^{t(d+1)d^2}$. This completes the proof of~(1).

\medskip \noindent \textbf{Part (2).} As in the proof of Part~(1), we may assume that $p$ is sufficiently large so that $p/ \log p > 5t$. Let $\F$ be a family that satisfies the assumptions of the theorem and has the $(p,2)$-property. Applying the same argument as in Part~(1), with Theorem~\ref{Thm:Larman} instead of the Ramsey theorem, we deduce that $\F$ satisfies the $(p^5, \lceil p/t \rceil)$-property.

Since $p/t \geq \log(p^5)$ by assumption, we can apply to $\F$ Theorem~\ref{Thm:KST}(2) to deduce that
\[
\HD_{\F}(p,2) = O \left(p^5+ \left(\frac{p^5}{p/t}\right)^2 \left(\log \left(\frac{p^5}{p/t}\right)\right)^2\right) = O(p^8 \log^2 p),
\]
completing the proof.

\medskip \noindent \textbf{Part (3).}
%The beginning of the proof is similar to Part~(2): we use the Ramsey theorem for families of convex sets in the plane to deduce that $\F$ satisfies the $(p,q)$-property, for an appropriate choice of $p,q$. Then, instead of applying the quantitative $(p,q)$-theorem as a blackbox, like was done in the proof of Part~(2), we dive into the proof, and replace one of the components used -- the weak epsilon-net theorem -- with a stronger tool, an epsilon-net theorem that can be applied since the VC-dimension of $\F$ is bounded. Let us fill in the details.
%Let $\F$ be a family that satisfies the assumptions of the theorem.
From Theorem~\ref{Thm:Larman} we can deduce that for any $q$, $\F$ satisfies the $(tp^4q,q)$-property. By~\cite[Proposition~2.3]{KST17}, this implies that there exists a point that pierces a $\Omega(\frac{q}{(tp^4q)^{(q-1)/(q-2)}})$-fraction of the sets in $\F$. By~\cite[Proposition~2.6]{KST17}, this (in turn) implies that $\F$ can be pierced by $f(\beta)$ points, where $\beta=\Omega((tp^4q)^{-(q-1)/(q-2)})$ and $f(\beta)$ is the size of the minimal weak $\epsilon$-net guaranteed by the weak $\epsilon$-net theorem~\cite{ABFK} in the plane for $\epsilon=\beta$.

Since by assumption, the VC-dimension of $\F$ is bounded by $k$, $\F$ admits an $\epsilon$-net of size $O_k(\frac{1}{\eps} \log \frac{1}{\eps})$ by Theorem~\ref{Thm:eps-net}. Hence, we can replace the application of the weak $\eps$-net theorem in the above argument with an application of Theorem~\ref{Thm:eps-net}. Substituting $q=\log p$, which is easily seen to be (roughly) optimal, we obtain
\[
\HD_{\F}(p,2) = O_k \left( (tp^4 \log p)^{\frac{\log p-1}{\log p -2}} \cdot \log \left((tp^4 \log p)^{\frac{\log p-1}{\log p -2}} \right) \right) = O_{k,t}(p^4 \log^2 p),
\]
as asserted.
\end{proof}

\subsection{An example of a set system with Helly number 2 that does not admit a (3,2)-theorem}
\label{app:example}

The following example, presented by Fon der Flaass and Kostochka~\cite{FdFK93} in a different context, implies that in the abstract (i.e., non-geometric) setting, the existence of a $(2,2)$-theorem, and even Helly number 2, does not imply the existence of a $(p,2)$-theorem with a fixed number $f(p)$ of points that does not depend on the size of the family. The example uses a classical result of Erd\H{o}s~\cite{ES74} which asserts that for any $m \in \mathbb{N}$, there exists an $m$-chromatic triangle-free graph $G_m$ on $n(m)=O(m^2 \log^2 m)$ vertices.

As any graph on $n$ vertices can be represented as the intersection graph of a family of axis-parallel boxes in $\Re^{\lceil n/2 \rceil}$ (see~\cite{Rob69}), the \emph{complement graph} $\bar{G}_m$ can be represented as the intersection graph of some family $\F$ of axis-parallel boxes in $\Re^{n'}$, for $n'=\lceil \frac{n(m)}{2} \rceil =O(m^2 \log^2 m)$. The family $\F$ has Helly number 2 (like any family of axis-parallel boxes). It satisfies the $(3,2)$-property, since if some three elements of $\F$ are pairwise disjoint, then the intersection graph of $\F$ contains an empty triangle, and this cannot happen since the intersection graph $\bar{G}_m$ is the complement of a triangle-free graph. On the other hand, $\F$ cannot be pierced by less than $m$ points, as any transversal of $\F$ of size $k$ induces a partitioning of the vertices of $\bar{G}_m$ into into $k$ cliques, which in turn yields a $k$-coloring of $G_m$ (which was assumed to be $m$-chromatic). Therefore, we have $\HD_{\F}(3,2) \geq m$ although the Helly number of $\F$ is 2.

\section{Discussion and open problems}

A central problem left for further research is whether Theorem~\ref{Thm:Main} which allows leveraging a $(p,2)$-theorem into a $(p,q)$-theorem, can be extended to the cases $\HD_{\F}(p,2) = pf(p)$ where $f(p) \ll \log p$ or $f(p)$ being super-polynomial in $p$. It seems that super-polynomial growth rates can be handled with a slight modification of the argument (at the expense of replacing $T_{100}(p)$ with some worse dependence on $p$). For sub-logarithmic growth rate, it seems that the current argument does not work, since the inductive step requires the packing number of $\F$ to be extremely small, and so, Lemma~\ref{Lem:Cute} allows reducing the piercing number of the `bad' family $S$ only slightly, rendering Lemma~\ref{Lem:App-Weak-(p,q)} insufficient for piercing $\F$ with $p-q+1$ points in total.

Extending the method for sub-logarithmic growth rates will have interesting applications. For instance, it will immediately yield a tight $(p,q)$-theorem for all $q=\Omega(\log \log p)$ for families of axis-parallel boxes in which for each two intersecting boxes, a corner of one is contained in the other, following the work of Chudnovsky et al.~\cite{CSZ17+}.

\medskip Another open problem is whether the method can be extended to families $\F$ that admit a $(2,2)$-theorem, but satisfy $\HD_{\F}(2,2)>1$. A main obstacle here is that in this case, Observation~\ref{Obs:Dol} does not apply, and instead, we have the bound $\HD(p,q) \leq \HD(p-\lambda,q-1)+\lambda$. While the bound is only slightly weaker, it precludes us from using the inductive process of Wegner and Dol'nikov, as in each application of the inductive step we have an `extra' point.

%%\begin{thebibliography}{50}
%%\end{thebibliography}
%

\end{document}